\newtheorem{theorem}{Theorem}[section]
\newtheorem{lemma}[theorem]{Lemma}
\newtheorem{proposition}[theorem]{Proposition}
\newtheorem{conjecture}[theorem]{Conjecture}
\theoremstyle{definition}
\newtheorem{definition}[theorem]{Definition}
\newtheorem{remark}[theorem]{Remark}
\newtheorem{example}[theorem]{Example}
\newtheorem*{remark*}{Remark}
\newtheorem*{definition*}{Definition}
\declaretheoremstyle[notefont=\bfseries,notebraces={}{},%
    headpunct={},postheadspace=1em]{mystyle}
\declaretheorem[style=mystyle,numbered=no,name=Theorem]{thm-hand}
\newcommand{\bea}          {\begin{eqnarray}}
\newcommand{\eea}          {\end{eqnarray}}
\newcommand{\beastar}          {\begin{eqnarray*}}
\newcommand{\eeastar}          {\end{eqnarray*}}
\newcommand{\prop}{\textnormal{prop }}
\newcommand{\supp}{\textnormal{supp }}
\newcommand{\asdim}{\textnormal{asdim }}
\newcommand{\Diam}{\textnormal{Diam}}
\newcommand{\topo}[1]
\begin{document}

\title{$L^p$ coarse Baum-Connes conjecture via $C_{0}$ coarse geometry}

\author{Hang Wang}
\address[Hang Wang]{\normalfont{Research Center for Operator Algebras, School of Mathematical Sciences, East China Normal University, Shanghai 200062, China}}
\email{wanghang@math.ecnu.edu.cn}
\thanks{Hang Wang is supported by NSFC 12271165 and in part by the Science and Technology Commission of Shanghai Municipality (No. 22DZ2229014, 23JC1401900)}

\author{Yanru Wang}
\address[Yanru Wang]{\normalfont{Tianyuan Mathematical Center in Southwest China, School of Mathematics\\ Sichuan University, Chengdu 610065, China}}
\email{52215500014@stu.ecnu.edu.cn}
\thanks{Yanru Wang is supported by the Science and Technology Commission of Shanghai Municipality (No. 23JC1401900). Yanru Wang is the corresponding author.}

\author{Jianguo Zhang}
\address[Jianguo Zhang]{\normalfont{School of Mathematics and Statistics, Shaanxi Normal University,
Xi'an 710119, China}}
\email{jgzhang@snnu.edu.cn}
\thanks{Jianguo Zhang is supported by NSFC 12271165, 12171156, and 12301154}

\author{Dapeng Zhou}
\address[Dapeng Zhou]{\normalfont{School of Statistics and Information, Shanghai University of International Business and Economics, Shanghai 201620, China}}
\email{giantroczhou@126.com}
\thanks{Dapeng Zhou is supported by NSFC 12271165}

\date{\today}
\subjclass{46L80, 55U10, 58B34.}
\keywords{$L^{p}$ coarse Baum-Connes conjecture, Coarse geometry, Finite asymptotic dimension}

\begin{abstract} 
In this paper, we investigate the $L^{p}$ coarse Baum-Connes conjecture for $p\in [1,\infty)$ via $C_{0}$ coarse structure, which is a refinement of the bounded coarse structure on a metric space. We prove that the $C_{0}$ version of the $L^{p}$ coarse Baum-Connes conjecture holds for a finite-dimensional simplicial complex equipped with a uniform spherical metric. Using this result, we construct an obstruction group for the $L^{p}$ coarse Baum-Connes conjecture. As an application, we show that the obstruction group vanishes under the assumption of finite asymptotic dimension, thereby providing a new proof of the $L^{p}$ coarse Baum-Connes conjecture in this case.
\end{abstract}

\maketitle
\tableofcontents

\section{Introduction} 
The coarse Baum-Connes conjecture for metric spaces formulated by J. Roe in \cite{Roe93} is a coarse analog of the celebrated Baum-Connes conjecture for groups first proposed by P. Baum and A. Connes in \cite{BC00}, and reformulated in \cite{BCH94}: both bridging noncommutative geometry with classical topology and geometry in spirit. For a proper metric space $X$,  Roe established the coarse Baum-Connes assembly map in the article \cite{Roe93}. The left-hand side of this map, or the topological side, which involves the limit of the $K$-homology groups of the Rips complex of $X$, is local and computable, while the right-hand side, or the noncommutative side, which is the $K$-theory of a certain $C^{*}$-algebra, namely Roe algebra of $X$, is the receptor for higher indices of elliptic operators. If $X$ has bounded geometry, then the coarse Baum-Connes conjecture posits that the coarse Baum-Connes assembly map is an isomorphism. The coarse Baum-Connes conjecture implies the injectivity part of the Baum-Connes conjecture, which deduces the Novikov conjecture, when $X$ is a finitely generated group equipped with a word metric. In addition, the coarse Baum-Connes conjecture has several applications, such as Gromov's zero-in-the-spectrum conjecture and the positive scalar curvature conjecture, when $X=M$, a complete Riemannian manifold \cite{Yu97A}.

In the past three decades, a great deal of effort has been devoted to the study of the coarse Baum-Connes conjecture. Some researchers showed that certain expander graphs are counterexamples to the coarse Baum-Connes conjecture \cite{HLS02, WY12}. On the other side, the coarse Baum-Connes conjecture has been verified for a variety of spaces. At first, in \cite{HR95}, N. Higson and J. Roe verified the coarse Baum-Connes conjecture for non-positively curved spaces, including affine buildings and Gromov hyperbolic metric spaces. Later, G. Yu proved that it holds for proper metric spaces with finite asymptotic dimension \cite{Yu98}, then more generally, for discrete metric spaces that admit a uniform embedding into Hilbert space \cite{Yu00}. Next, T. Fukaya and S. Oguni found that it is true for certain relatively hyperbolic groups \cite{FO12}, direct products of geodesic Gromov hyperbolic spaces and Busemann non-positively curved spaces \cite{FO15}, Busemann non-positively curved spaces \cite{FO16}, and proper coarsely convex spaces \cite{FO20}. Recently, J. Deng, Q. Wang, and G. Yu showed that the coarse Baum-Connes conjecture holds for group extensions with a `CE-by-CE' structure including relative expanders and the special box spaces of free groups \cite{DWY23}. 

Investigating the $L^{p}$ version of the Baum-Connes conjecture is both exciting and valuable. Firstly, there are some excellent articles and surveys on the $L^{p}$ version of the Baum-Connes conjecture. In Lafforgue's approach to the Baum-Connes conjecture \cite{Laf02}, he invented the Banach $KK$-theory to study many discrete groups with property (T), including Gromov's hyperbolic groups, and established the Baum-Connes conjecture with values in the $L^{1}$ group subalgebra of the reduced group $C^{*}$-algebra for such groups. In a remarkable development \cite{Yu05}, G. Yu demonstrated that for any hyperbolic group  $\Gamma$, there exists $p\in [2,\infty)$ such that $\Gamma$ admits a proper affine isometric action on an $\ell^{p}$ space. Therefore, G. Kasparov and G. Yu explored the $L^{p}$ Baum-Connes conjecture for $p\in (1,\infty)$ and showed that this conjecture holds for groups with proper isometric actions on $\ell^{p}$ spaces in their unpublished work. Moreover, Y. C. Chung in \cite{Chu21} defined a certain $L^{p}$ assembly map and showed that the $L^{p}$ Baum-Connes conjecture for $\Gamma$ with coefficient in $C(X)$ is true if there is an action of a countable discrete group $\Gamma$ on a compact Hausdorff space $X$ having finite dynamical complexity. In this paper, we will explore classes of spaces for which the $L^{p}$ analog of the coarse Baum-Connes conjecture holds.

Furthermore, there has been renewed interest in $L^{p}$ operator algebras. Due to Phillips' work \cite{Phi13}, it is known that the $K$-theory of $L^{p}$ analogs of Cuntz algebras is the same as that of $C^{*}$-algebras. This work has inspired researchers to investigate $L^{p}$ operator algebras that behave like $C^{*}$-algebras, including group $L^{p}$ operator algebras \cite{Gar21, GT15, GT16, GT19}, $L^{p}$ crossed products \cite{WWZZ24, WZ23} and groupoid $L^{p}$ operator algebras \cite{GL17}. There is also related work on $\ell^{p}$ Roe algebras \cite{CL18, CL21, LWZ19}.

Moreover, in recent years, some mathematicians made significant progress on the $L^{p}$ coarse Baum-Connes conjecture. In this important article \cite{ZZ21}, J. Zhang and D. Zhou proved that for $p\in [1,\infty)$, the $L^{p}$ coarse Baum-Connes conjecture holds for spaces with finite asymptotic dimension, and showed that on such spaces, the $K$-theory of $L^{p}$ Roe algebras is independent of $p\in(1,\infty)$. Next,  L. Shan and Q. Wang in \cite{SW21} verified the injectivity part of the $L^{p}$ coarse Baum-Connes conjecture for $p\in (1,\infty)$ when $X$ coarsely embeds into a simply connected complete Riemannian manifold with non-positive sectional curvature, also called a Hadamard manifold. Recently, Y. C. Chung and P. W. Nowak in \cite{CN23} demonstrated that the $L^{p}$ coarse Baum-Connes conjecture fails for expanders arising from residually finite hyperbolic groups when $p\in (1,\infty)$. 

To study the $L^{p}$ coarse Baum-Connes conjecture, an important step is to define the $L^{p}$ coarse Baum-Connes assembly map. We first describe the left-hand side of this map. To do this, we briefly recall from \cite{Roe93} that an anti-\v{C}ech sequence for a proper metric space $X$ is a sequence $\{\mathcal{U}_{i}\}^{\infty}_{i=1}$ of successively coarser open covers of $X$ satisfying certain conditions. Furthermore, G. Yu in \cite{Yu97B} introduced localization algebras and claimed that the $K$-homology can be identified with the $K$-theory of the localization algebra. Based on Yu's work, J. Zhang and D. Zhou in \cite{ZZ21} introduced the $L^{p}$ localization algebras, and defined the $L^{p}$ coarse $K$-homology $KX^{p}_{*}(X)$,  which is the limit of $K$-theory groups of $L^{p}$ localization algebras of an anti-\v{C}ech sequence of $X$. On the right-hand side, we also have $L^{p}$ analogs of groups, the $K$-theory groups of the $L^{p}$ Roe algebra of controlled and locally compact operators on an $L^{p}$-$X$-module. Analogous to the coarse Baum-Connes assembly map, the $L^{p}$ coarse Baum-Connes assembly map is defined as follows:
$$
\mu: KX_{*}^{p}(X)=\varinjlim\limits_{i}K_{*}(B^{p}_{L}(N_{\mathcal{U}_{i}})) \rightarrow K_{*}(B^{p}(X)),
$$
where $\mathcal{U}_{i}$ is any anti-\v{C}ech sequence of $X$. The following $L^{p}$ coarse Baum-Connes conjecture was elaborated in \cite{ZZ21}. 

\begin{conjecture}[The $L^{p}$ coarse Baum-Connes conjecture]
Let $X$ be a proper metric space with finite asymptotic dimension. For $p\in [1,\infty)$, the $L^{p}$ coarse Baum-Connes assembly map $\mu$ is an isomorphism.
\end{conjecture}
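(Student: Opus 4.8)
The plan is to transpose N.\ Wright's $C_{0}$-coarse-geometric proof of the coarse Baum--Connes conjecture into the $L^{p}$ operator-algebra framework of \cite{ZZ21}, in three phases: establish a $C_{0}$ version of the conjecture for finite-dimensional simplicial complexes, use it to isolate an obstruction group, and show that this obstruction vanishes when $\asdim X<\infty$. For \emph{Phase 1 (reduction to finite-dimensional complexes)}, since $\asdim X=n<\infty$ one may choose an anti-\v{C}ech sequence $\{\mathcal{U}_{i}\}$ for $X$ whose nerves $N_{\mathcal{U}_{i}}$ all have dimension $\le n$, and equip each $N_{\mathcal{U}_{i}}$ with the uniform spherical metric, in which every simplex is isometric to a fixed geodesic spherical simplex. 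In this metric the $C_{0}$ coarse structure refines the bounded one, and every $C_{0}$-controlled operator is bounded-controlled; hence there are canonical comparison maps $B^{p}_{L,C_{0}}(N_{\mathcal{U}_{i}})\to B^{p}_{L}(N_{\mathcal{U}_{i}})$ and $B^{p}_{C_{0}}(N_{\mathcal{U}_{i}})\to B^{p}(N_{\mathcal{U}_{i}})$ intertwining assembly. Since $\mu$ is the colimit over $i$ of the bounded assembly maps of the $N_{\mathcal{U}_{i}}$ (the map $\varinjlim_{i}K_{*}(B^{p}(N_{\mathcal{U}_{i}}))\to K_{*}(B^{p}(X))$ being an isomorphism because each $N_{\mathcal{U}_{i}}$ is coarsely equivalent to $X$), it suffices to control these comparison maps together with the $C_{0}$ assembly maps uniformly in $i$.

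\emph{Phase 2 (the $C_{0}$ conjecture for finite-dimensional complexes).} I would show, by induction on dimension, that for a finite-dimensional simplicial complex $P$ with the uniform spherical metric the $C_{0}$ version of the $L^{p}$ coarse Baum--Connes assembly map is an isomorphism. The base case $\dim P=0$ is a uniformly discrete space, where the $C_{0}$ structure is minimal, both sides reduce to the $L^{p}$ $K$-homology of a point summed over $P$, and assembly is manifestly an isomorphism. For the inductive step write $P=U\cup V$ with $U$ a $C_{0}$-coarse neighbourhood of the $(k-1)$-skeleton $P^{(k-1)}$ and $V$ the coarse disjoint union of the open cores of the $k$-simplices: in the $C_{0}$ coarse structure $V$ is coarsely a uniformly discrete space (each core being $C_{0}$-coarsely a point), $U$ $C_{0}$-coarsely deformation retracts onto $P^{(k-1)}$, and $U\cap V$ retracts onto a coarse disjoint union of triangulated $(k-1)$-spheres, again $C_{0}$-coarsely a uniformly discrete space. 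Feeding this decomposition into an $L^{p}$ $C_{0}$-coarse Mayer--Vietoris sequence for both the $C_{0}$ $L^{p}$ coarse $K$-homology and $K_{*}(B^{p}_{C_{0}}(\,\cdot\,))$, naturally with respect to assembly, and applying the inductive hypothesis and the five lemma, finishes the step. The tools to be built are: (i) an $L^{p}$ Mayer--Vietoris six-term exact sequence for $C_{0}$ Roe and localization algebras, obtained from honest short exact sequences of $L^{p}$ operator algebras by a uniformly bounded, controlled partition-of-unity cutting and pasting of $L^{p}$-$X$-modules; and (ii) $C_{0}$-coarse homotopy invariance of both functors, via homotopy invariance of Banach-algebra $K$-theory applied to the $L^{p}$ localization algebra of a mapping cylinder.

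\emph{Phase 3 (the obstruction group and finite asymptotic dimension).} Passing to mapping cones of the comparison maps of Phase 1 yields, at each level $i$, relative terms sitting in two six-term sequences --- one for localization algebras, one for Roe algebras --- linked by assembly; since the $C_{0}$ assembly maps are isomorphisms by Phase 2, the five lemma shows that $\mu$ is an isomorphism provided the obstruction group $\mathrm{Obs}^{p}(X):=\varinjlim_{i}K_{*}(\text{cone of }B^{p}_{C_{0}}(N_{\mathcal{U}_{i}})\to B^{p}(N_{\mathcal{U}_{i}}))$ vanishes, and it carries a relative assembly map matching the localization-algebra cones, so this is all that remains. To see $\mathrm{Obs}^{p}(X)=0$ when $\asdim X<\infty$: in the uniform spherical metrics the anti-\v{C}ech structure maps $N_{\mathcal{U}_{i}}\to N_{\mathcal{U}_{i+1}}$ increasingly push bounded-scale data into the $C_{0}$ part of the coarse structure, so that together with a squeezing homotopy at each level the comparison maps become isomorphisms on $K$-theory in the colimit, killing the cones and hence $\mathrm{Obs}^{p}(X)$.

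\emph{Main obstacle.} I expect the crux to lie in Phase 2(i)--(ii) and in the quantitative estimate of Phase 3. Unlike in the $C^{*}$-setting, no Kasparov-theoretic black boxes are available: every exact sequence (Mayer--Vietoris, mapping cone) must be produced from genuine ideals of $L^{p}$ operator algebras, and compatibility with assembly forces all cutting, pasting, and homotopy operators on $L^{p}$-$X$-modules to be built with uniformly controlled norms and propagation. Verifying that these constructions survive the anti-\v{C}ech colimit comparison --- that is, that finite asymptotic dimension genuinely annihilates the obstruction group --- is the main hurdle.
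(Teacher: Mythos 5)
Your Phases 1--2 track the paper's route (its Sections 2--3): the paper also reduces to nerves with uniform spherical metrics, proves an $L^{p}$ Mayer--Vietoris sequence and Eilenberg swindles by hand, and establishes the $C_{0}$ version of the conjecture for finite-dimensional complexes by induction on dimension. (It uses a binary decomposition by stars about barycenters in the second barycentric subdivision, enlarged by the $1$-skeleton so that Lemma \ref{lemma A.8} applies, rather than your skeleton-neighbourhood decomposition; your variant would still owe the same distortion/relative-connectedness estimates to compare inherited and uniform spherical metrics, but this is a difference of bookkeeping, not of substance.)

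The genuine gap is Phase 3. Your obstruction group is the colimit of the level-wise mapping cones of $B^{p}((N_{\mathcal{U}_{i}})_{0})\hookrightarrow B^{p}(N_{\mathcal{U}_{i}})$, and its vanishing is indeed equivalent to the conjecture (modulo the point, which you pass over, that at a fixed level $K_{*}(B^{p}((N_{\mathcal{U}_{i}})_{0}))$ is the $K$-homology \emph{at infinity}, not $K_{*}(B^{p}_{L}(N_{\mathcal{U}_{i}}))$; one must pass to a collapsing subsequence as in Lemma \ref{lemma 4.4} before the colimits match). But then your argument that finite asymptotic dimension kills this group --- ``the structure maps increasingly push bounded-scale data into the $C_{0}$ part, plus a squeezing homotopy at each level'' --- is a restatement of the conjecture, not a proof, and the proposed mechanism fails at any fixed level: conjugating a bounded-propagation operator through a covering isometry of a connecting map only cuts its propagation down to a universal constant of order $\pi$ (cf.\ the $2R+\pi$ estimate in Section 2.3), never into the $C_{0}$ algebra, no matter how many levels one pushes forward. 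The paper's way of making the ``infinite squeezing'' rigorous is exactly what is missing from your plan: glue all nerves into the total coarsening space $X(W,\mathcal{U}_{*})$, introduce the fusion structure $X_{f}$ (so the obstruction becomes $K_{*}(B^{p}(X_{f}))$, Theorem \ref{th 4.8}, which already needs Eilenberg swindles on $X_{0}$ via Lemma \ref{lemma 4.4} and the coarse equivalences of Theorem \ref{th 4.12}), then use finite asymptotic dimension quantitatively through Proposition \ref{prop 5.8} --- Lipschitz squeezing maps linearly homotopic to the collapsing maps, possible only because the nerve dimensions are uniformly bounded --- to run an Eilenberg swindle killing $K_{*}(B^{p}(X_{h}))$ for the hybrid structure, and finally transfer this vanishing from $X_{h}$ to $X_{f}$ by a second induction on dimension (Theorem \ref{th 5.10}). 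Without the coarsening space and these two coarse structures (or an equivalent device on which infinitely many collapsing/squeezing maps can be composed with uniform propagation control, and a mechanism to return from the more computable structure to the one carrying the obstruction), your Phase 3 does not go through.
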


Recall that a coarse structure on a set $X$ is a family of subsets of $X\times X$, which encodes information about the large-scale features of the space. Say that a bounded coarse structure on a metric space $(X,d)$ is a coarse structure consisting of all sets $A\subseteq X\times X$ for which the restriction of the distance function $d: A\rightarrow \mathbb{R}^{+}$ is bounded. An alternative approach to Yu's theorem on the coarse Baum-Connes conjecture \cite{Yu98} is $C_{0}$ coarse geometry. The notion of $C_{0}$ coarse geometry was introduced by N. Wright in \cite{Wri03} to explore the obstruction of properly positive scalar curvature on open manifolds. This new coarse geometry is finer than the bounded coarse structure at infinity, and it carries different information about the large-scale structure of a metric space compared to the bounded coarse structure.  Using this coarse structure, he established the $C_{0}$ version of the coarse Baum-Connes conjecture. Further, he introduced the total coarsening space to coarsen the metric space into a simplicial complex and thus built a `geometric' obstruction group to the coarse Baum-Connes conjecture on the bounded coarse structure \cite{Wri05}. As an application, he showed that the obstructions vanish for bounded geometry metric spaces with finite asymptotic dimension, hence giving a new proof of the coarse Baum-Connes conjecture in \cite{Wri05}. 

Inspired by his work, we investigate the $L^{p}$ coarse Baum-Connes conjecture from the perspective of $C_{0}$ coarse geometry. Using the abstract homology uniqueness result, we establish the $C_{0}$ version of the $L^{p}$ coarse Baum-Connes conjecture. Combining this result with the $L^{p}$ version of the Eilenberg swindles, the $L^{p}$ version of the Mayer-Vietoris sequence, and the fusion coarse structure between the bounded coarse structure and the $C_{0}$ coarse structure, we can create an obstruction group for the $L^{p}$ coarse Baum-Connes conjecture on the bounded structure. However, calculating obstructions directly can be quite challenging. To make this process easier, we introduce the hybrid coarse structure defined in \cite{Wri05}, which is more computable than the fusion coarse structure. By effectively utilizing this hybrid coarse structure, we demonstrate that obstruction groups arising from the fusion coarse structure vanish in bounded geometry metric spaces that have finite asymptotic dimension. Hence, we obtain a new proof of the $L^{p}$ coarse Baum-Connes conjecture for such spaces, which was first proved by J. Zhang and D. Zhou. The main contributions of this article are as follows: 

\begin{theorem}(see Theorem \ref{th 3.18})
Let $X$ be a finite-dimensional simplicial complex equipped with a uniform spherical metric, and let $X_{0}$ be the space $X$ with the $C_{0}$ coarse structure. Let $p\in [1,\infty)$, then 
$$
K^{p,\infty}_{*}(X)\cong KX^{p}_{*}(X_{0})\underset{\cong}{\xrightarrow{\mu}}K_{*}(B^{p}(X_{0})).
$$
\end{theorem}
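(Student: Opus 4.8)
The plan is to establish the two isomorphisms in the statement separately and then compose them. The first isomorphism, $K^{p,\infty}_*(X)\cong KX^p_*(X_0)$, should be an identification of two models for the topological side: the group $K^{p,\infty}_*(X)$ is built directly from the $L^p$ localization algebra of the simplicial complex $X$ (using its $C_0$-refined structure, i.e.\ with the degree of control shrinking near infinity), while $KX^p_*(X_0)$ is the inductive limit over an anti-\v{C}ech sequence for the $C_0$ coarse structure on $X$. Since $X$ is already a finite-dimensional simplicial complex with a uniform spherical metric, it is cofinal in its own anti-\v{C}ech system up to coarse equivalence, so the inductive limit should stabilize and agree with the localization $K$-theory computed on $X$ itself. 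I would make this precise by a standard cofinality / telescoping argument for directed systems of $L^p$ localization algebras, invoking functoriality and the coarse invariance of $L^p$ coarse $K$-homology stated earlier in the paper.

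The substantive content is the second isomorphism, $\mu\colon KX^p_*(X_0)\xrightarrow{\cong}K_*(B^p(X_0))$, i.e.\ the $C_0$ version of the $L^p$ coarse Baum--Connes conjecture for a finite-dimensional uniform spherical complex. The approach here is the ``abstract homology uniqueness'' strategy advertised in the introduction: one shows that both sides, as functors on an appropriate category of finite-dimensional simplicial complexes with uniform spherical metrics (and with the $C_0$ coarse structure), define coarse homology theories in the $L^p$ sense — they are homotopy invariant, satisfy excision / a Mayer--Vietoris sequence for suitable decompositions into subcomplexes, are additive over coarse disjoint unions (this is where the $L^p$ Eilenberg swindle enters to kill the contributions of infinite disjoint pieces), and agree on a point. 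The assembly map $\mu$ is a natural transformation between these two theories that is an isomorphism on a point; a Mayer--Vietoris induction on the (finite) dimension of the complex, cutting $X$ along subcomplexes and using five-lemma arguments at each stage, then forces $\mu$ to be an isomorphism on all of $X$.

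Concretely, the induction proceeds by dimension: the base case (dimension zero, a uniformly discrete spherical complex with the $C_0$ structure) reduces to the point case after the swindle; for the inductive step one writes the $n$-skeleton as a union of the $(n-1)$-skeleton with a coarse disjoint union of $n$-cells glued along their boundaries, applies the $L^p$ Mayer--Vietoris sequences on both the localization-algebra side and the $B^p$ side, checks compatibility of the connecting maps with $\mu$, and concludes by the five lemma. The $C_0$ coarse structure is what makes this feasible: near infinity the cells become ``coarsely small,'' so the relevant pieces are coarsely equivalent to bounded spaces and the theories are computed via their values on points.

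The main obstacle I expect is verifying that the $L^p$ versions of the tools behave well enough: establishing the $L^p$ Mayer--Vietoris sequence for $L^p$ localization algebras and $L^p$ Roe algebras with controlled supports (which requires a careful treatment of ideals and quotients of $L^p$ operator algebras, where one cannot freely use $C^*$-algebraic facts such as automatic closedness of images or the existence of approximate units with good properties), and checking that the $L^p$ Eilenberg swindle applies uniformly across the anti-\v{C}ech system. Ensuring all these constructions are natural in $X$ and compatible with the assembly map $\mu$, so that the five-lemma steps are legitimate, is where the real work lies; the dimension induction itself is then routine.
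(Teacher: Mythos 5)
Your overall strategy (Mayer--Vietoris induction on the dimension, with a zero-dimensional base case, using the $L^p$ versions of excision and homotopy invariance) is the same general route the paper takes, but two of your key steps would fail as stated. The most serious is the base case: you claim that the infinite uniformly discrete case ``reduces to the point case after the swindle.'' An Eilenberg swindle proves \emph{vanishing} of $K$-theory for spaces admitting a properly supported, uniformly controlled family of coarse maps, and in this paper it is used only later (for the coarsening space in Section~4); it cannot be used here, because for an infinite uniformly discrete space with the $C_{0}$ structure neither side vanishes nor is it determined by the value on a point. Indeed $B^{p}(X_{0})=\varinjlim_{C}\bigl(\mathcal{K}(\ell^{p}(C)\otimes\ell^{p})\oplus\ell^{\infty}(X\setminus C)\otimes\mathcal{K}(\ell^{p})\bigr)$, whose $K_{0}$ is a large quotient of $\prod_{x}\mathbb{Z}$, and the paper's Theorem~\ref{th 3.17} establishes the base case by an explicit computation of both sides (identifying kernels of the surjections out of $K_{0}(B^{p}_{L}(X))$), not by any formal reduction to a point; note also that an infinite discrete space cannot be reached from a point by finitely many Mayer--Vietoris steps. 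Without this computation your induction has no valid starting point.

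Secondly, your treatment of the isomorphism $K^{p,\infty}_{*}(X)\cong KX^{p}_{*}(X_{0})$ is incorrect: $K^{p,\infty}_{*}(X)$ is $\varinjlim_{C}K_{*}(B^{p}_{L}(X/C))$ over compact subsets, not the localization algebra of $X$ itself, and the directed system for $KX^{p}_{*}(X_{0})$ runs over locally finite $C_{0}$ open covers, in which $X$ is not cofinal; if your cofinality claim were true, the left-hand side would be the ordinary $K$-homology $K_{*}(B^{p}_{L}(X))$, which is not what the theorem asserts (and is false in general, since the $C_{0}$ theory only sees information at infinity). In the paper this identification is proved for discrete spaces by showing the nerve of a $C_{0}$ cover is homotopy equivalent to $X/C$, and in general by running the same admissible-decomposition induction simultaneously for the transformation $K^{p,\infty}_{*}\to KX^{p}_{*}$. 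Finally, your skeletal decomposition glosses over the metric issue that drives the paper's choice of decomposition: the pieces of a cut carry the inherited subspace metric, and to apply the inductive hypothesis one must know this metric is $C_{0}$-coarsely equivalent to a uniform spherical metric. The paper arranges this by decomposing via uniformly separated stars in the second barycentric subdivision and enlarging each piece by the $1$-skeleton so that it is relatively connected, which is exactly what makes the distortion estimate (Lemma~\ref{lemma A.8}) applicable; some such device is needed in your argument as well, and it is absent.
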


This is the $L^{p}$ coarse Baum-Connes conjecture for $C_{0}$ coarse geometry. The key steps in the proof involve establishing coarse equivalences between inherited metrics and uniform spherical metrics, and applying induction on the dimension. Using this conclusion, we can derive the following theorem.

\begin{theorem}(see Theorem \ref{th 4.8})
Let $W$ be a uniformly discrete bounded geometry metric space, and let $\mathcal{U}_{*}$ be an anti-\v{C}ech sequence for $W$. Let $X=X(W,\mathcal{U}_{*})$ be the corresponding total coarsening space, and let $X_{f}=X(W,\mathcal{U}_{*})_{f}$ be the fusion coarse structure on $X$. Then the $L^{p}$ coarse Baum-Connes conjecture holds for $W$ if and only if $K_{*}(B^{p}(X_{f}))=0$.
\end{theorem}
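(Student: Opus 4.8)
The plan is to assemble the statement from the two coarse structures on the total coarsening space $X$ --- the bounded coarse structure (where $K_*(B^p(X))$ computes the right-hand side of the assembly map for $W$, via the coarsening construction) and the $C_0$ coarse structure $X_0$ (where Theorem \ref{th 3.18} applies) --- by means of a Mayer--Vietoris argument built from the fusion coarse structure $X_f$ that interpolates between them. First I would record the identification, coming from the theory of coarsening spaces of \cite{Wri05} transported to the $L^p$ setting, that $K_*(B^p(X))$ for the bounded coarse structure on the total coarsening space is naturally isomorphic to $K_*(B^p(W))$, and that under this identification the assembly map $\mu$ for $W$ is recovered; I would likewise note that $KX^p_*(X_0)$ assembles to $K_*(B^p(X_0))$ by Theorem \ref{th 3.18}, and that this group in turn is the $L^p$ coarse $K$-homology $KX^p_*(W)$, i.e.\ the left-hand side of the assembly map for $W$. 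So the task reduces to relating $K_*(B^p(X))$ (bounded structure) and $K_*(B^p(X_0))$ ($C_0$ structure) through $K_*(B^p(X_f))$.

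The core of the argument is an $L^p$ Mayer--Vietoris sequence, which I would obtain from the $L^p$ Eilenberg swindle and a decomposition of $X$ into two pieces along the telescope direction (a ``top half'' and a ``bottom half'' of the total coarsening space), intersecting in a strip. The fusion coarse structure is engineered so that on the bottom part it agrees with the bounded structure while near infinity (the top) it refines to the $C_0$ structure; the swindle forces the $K$-theory of the Roe algebra of the $C_0$-flavoured piece to vanish, exactly as in Wright's computation, so that the Mayer--Vietoris boundary map yields an isomorphism $K_*(B^p(X_f)) \cong K_*(\text{relative term})$ measuring the difference between the bounded-structure assembly for $W$ and its $C_0$ counterpart. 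Concretely, I expect a commuting diagram of $L^p$ Roe algebra $K$-theories with a long exact sequence in which $K_*(B^p(X_f))$ sits as the cone of the comparison map between $K_*(B^p(X_0))$ and $K_*(B^p(X))$, so that $K_*(B^p(X_f)) = 0$ holds if and only if that comparison map is an isomorphism. Chaining the two identifications from the previous paragraph, the comparison map is precisely the $L^p$ coarse Baum--Connes assembly map $\mu: KX^p_*(W) \to K_*(B^p(W))$, and the equivalence in the theorem follows.

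The main obstacle I anticipate is establishing the $L^p$ Mayer--Vietoris sequence with the required functoriality and compatibility: one must check that cutting $X$ into two coarsely excisive pieces behaves well simultaneously for the bounded, $C_0$, and fusion coarse structures, that the Roe algebras $B^p(\cdot)$ satisfy the algebraic excision needed for the six-term sequence in $K$-theory (this is where the $L^p$ Eilenberg swindle enters, to kill the $K$-theory of the ``infinite'' half), and that all the comparison maps commute on the nose with the assembly maps $\mu$. A secondary technical point is verifying that the metric on each piece of the decomposition is coarsely equivalent (in the appropriate structure) to a uniform spherical metric on a finite-dimensional simplicial complex, so that Theorem \ref{th 3.18} is genuinely applicable to the $C_0$-flavoured piece; this should follow from the bounded geometry and finite-dimensionality built into the anti-\v{C}ech sequence $\mathcal{U}_*$, but it needs to be spelled out carefully. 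Once these ingredients are in place, the proof is a diagram chase closing up the identifications described above.
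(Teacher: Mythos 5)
There is a genuine gap, and it sits at the very foundation of your argument: both identifications you ``record'' in the first paragraph are false. The total coarsening space $X$ with the \emph{bounded} coarse structure is not coarsely equivalent to $W$, so $K_*(B^p(X))\not\cong K_*(B^p(W))$ in general (already for $W$ a point one gets $X=[1,\infty)$, whose Roe algebra has trivial $K$-theory, while $K_0(B^p(W))=\mathbb{Z}$). Worse, on the $C_0$ side the group you want to play the role of the left-hand side is literally zero: by the Eilenberg swindle (Theorem \ref{th 4.5}, via Lemma \ref{lemma 3.11}) one has $K_*(B^p(X_0))=0$, so it cannot compute $KX^p_*(W)$; and in any case Theorem \ref{th 3.18} cannot be applied to $X_0$ itself, since the total coarsening space need not be finite-dimensional without an asymptotic dimension hypothesis. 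Consequently your central picture --- $K_*(B^p(X_f))$ as the cone of a comparison map between $K_*(B^p(X_0))$ and $K_*(B^p(X))$, with that comparison map ``being'' $\mu$ --- cannot be made to work: one of the two groups vanishes identically while $KX^p_*(W)$ does not, and the other is not the right-hand side of the assembly map. A fixed ``top half / bottom half'' Mayer--Vietoris cut also does not capture the fusion structure, because a fusion-controlled set is only required to be $C_0$ outside \emph{some} $X_i\times X_i$, so no single cut point suffices; one must pass to a direct limit over the partial coarsening spaces.

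The paper's route repairs exactly this by replacing the full Roe algebras with the ideals $I_0=\varinjlim_i B^p((X_i)_0)\subseteq B^p(X_0)$ and $I_f=\varinjlim_i B^p(X_i)\subseteq B^p(X_f)$. Theorem \ref{th 4.7} proves $KX^p_*(W)\cong K_*(I_0)$ --- here Theorem \ref{th 3.18} is applied to the finite-dimensional nerves $N_{\mathcal{U}_i}$ (finite-dimensional by bounded geometry), and the swindle plus a coarsely excisive Mayer--Vietoris sequence for $X=X_i\cup\pi^{-1}[i,\infty)$ identify $K_*(B^p((N_{\mathcal{U}_i})_0))$ with $K_*(B^p((X_i)_0))$. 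Theorem \ref{th 4.12} proves $K_*(B^p(W))\cong K_*(I_f)$ by exhibiting explicit coarse equivalences $W\to X_i$, and identifies $\mu$ with the map $K_*(I_0)\to K_*(I_f)$ induced by the forgetful inclusion. The equivalence with $K_*(B^p(X_f))=0$ then follows not from a Mayer--Vietoris cone but from two ideal extensions: one checks $I_0=B^p(X_0)\cap I_f$ and $B^p(X_0)+I_f=B^p(X_f)$ (a closedness argument specific to the $L^p$ setting), so $B^p(X_0)/I_0\cong B^p(X_f)/I_f$, and then compares the six-term sequences of $0\to I_0\to B^p(X_0)\to B^p(X_0)/I_0\to 0$ and $0\to I_f\to B^p(X_f)\to B^p(X_f)/I_f\to 0$, using $K_*(B^p(X_0))=0$ to see that $\mu$ is an isomorphism precisely when the boundary map $K_{*+1}(B^p(X_f)/I_f)\to K_*(I_f)$ is, i.e.\ precisely when $K_*(B^p(X_f))=0$. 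If you want to salvage your write-up, the essential missing idea is this passage to the ideals $I_0$ and $I_f$ and the second-isomorphism-theorem comparison of quotients; without it the diagram chase you describe has no correct groups to chase.
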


From this theorem, we see that the obstruction group for the $L^{p}$ coarse Baum-Connes conjecture on the bounded coarse structure is the $K$-theory of the $L^{p}$ Roe algebra of the total coarsening space equipped with a fusion coarse structure. Moreover, we show that the obstructions vanish under the assumption of finite asymptotic dimension, and thus give a new proof of the following theorem. 

\begin{theorem}(see Theorem \ref{th 5.3})
Let $W$ be a bounded geometry metric space with finite asymptotic dimension. For $p\in[1,\infty)$, the $L^{p}$ coarse Baum-Connes assembly map $\mu: KX^{p}_{*}(W)\rightarrow K_{*}(B^{p}(W))$ is an isomorphism.
\end{theorem}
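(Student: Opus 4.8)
Since the $L^{p}$ coarse $K$-homology $KX^{p}_{*}(\,\cdot\,)$ and the $K$-theory $K_{*}(B^{p}(\,\cdot\,))$ of the $L^{p}$ Roe algebra are coarse invariants, and every bounded geometry metric space is coarsely equivalent to a uniformly discrete one (pass to a maximal separated net), I would first reduce to the case that $W$ is uniformly discrete of bounded geometry. Suppose $\asdim W = n < \infty$. As in Yu's proof of the coarse Baum-Connes conjecture \cite{Yu98}, one can then choose the anti-\v{C}ech sequence $\mathcal{U}_{*}$ for $W$ so that every cover $\mathcal{U}_{i}$ has multiplicity at most $n+1$; consequently each nerve $N_{\mathcal{U}_{i}}$ is at most $n$-dimensional and the total coarsening space $X = X(W,\mathcal{U}_{*})$ is a finite-dimensional simplicial complex. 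By Theorem~\ref{th 4.8}, the $L^{p}$ coarse Baum-Connes conjecture for $W$ is equivalent to the single vanishing statement $K_{*}(B^{p}(X_{f})) = 0$, where $X_{f}$ carries the fusion coarse structure, so the whole theorem reduces to this vanishing.

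To get at $K_{*}(B^{p}(X_{f}))$ I would replace the fusion coarse structure by the hybrid coarse structure $X_{h}$ of \cite{Wri05}: it agrees with the bounded structure near the $W$-end of $X$ and with the $C_{0}$ structure near infinity, but is combinatorially organized so as to interact well with the simplicial decomposition of $X$. A controlled cutting-and-pasting argument comparing $X_{h}$ and $X_{f}$ — the $L^{p}$ analog of Wright's comparison in the $C^{*}$-setting, which goes through once the $L^{p}$ Mayer-Vietoris sequence of \cite{ZZ21} is available — produces an isomorphism $K_{*}(B^{p}(X_{h})) \cong K_{*}(B^{p}(X_{f}))$. It thus suffices to show $K_{*}(B^{p}(X_{h})) = 0$.

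This vanishing is proved by induction on the dimension of $X$. In the base case $X$ is $0$-dimensional, so $X_{h}$ is a countable discrete metric space in which, by the $C_{0}$ behavior at infinity, the ``off-diagonal width'' of a controlled locally compact $L^{p}$-operator tends to zero towards infinity; sliding along the anti-\v{C}ech telescope direction then gives an $L^{p}$ Eilenberg swindle exhibiting $B^{p}(X_{h})$ as flasque, whence $K_{*}(B^{p}(X_{h})) = 0$. For the inductive step, write $X = U \cup V$ where $U$ is an open neighborhood of the $(n-1)$-skeleton, which the hybrid structure makes coarsely equivalent to a complex of dimension at most $n-1$, and $V$ is the disjoint union of the open stars of the barycenters of the top-dimensional simplices. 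On $V$ and on $U \cap V$ the hybrid structure near infinity is the $C_{0}$ structure on a finite-dimensional complex, so Theorem~\ref{th 3.18} together with an $L^{p}$ Eilenberg swindle along the telescope direction yields $K_{*}(B^{p}(V_{h})) = K_{*}(B^{p}((U\cap V)_{h})) = 0$. Feeding $U$, $V$, $U\cap V$ into the $L^{p}$ Mayer-Vietoris sequence and applying the inductive hypothesis to $U$ gives $K_{*}(B^{p}(X_{h})) = 0$. Running the resulting chain of isomorphisms back through Theorem~\ref{th 4.8} shows that $\mu \colon KX^{p}_{*}(W) \to K_{*}(B^{p}(W))$ is an isomorphism.

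The step I expect to be the main obstacle is transporting Wright's $C^{*}$-algebraic arguments to $L^{p}$ operator algebras: Banach $K$-theory has neither functional calculus nor positivity, so each flasque structure must be realized by an explicit isometric shift of the underlying $L^{p}$-modules rather than extracted abstractly, and one must check carefully that the $L^{p}$ Eilenberg swindle and the $L^{p}$ Mayer-Vietoris sequence remain valid for the exotic fusion and hybrid coarse structures, not only for the bounded one. Granting these technical points — most of which are already provided by Theorems~\ref{th 3.18} and \ref{th 4.8} and the machinery built for them — the geometric skeleton of the argument is precisely that of \cite{Wri05}.
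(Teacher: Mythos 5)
Your overall skeleton (reduce to uniformly discrete $W$, take an anti-\v{C}ech sequence of degree at most $m+1$, invoke Theorem \ref{th 4.8} to reduce everything to $K_{*}(B^{p}(X_{f}))=0$, and compare the fusion with the hybrid structure) matches the paper, but your proof of the vanishing $K_{*}(B^{p}(X_{h}))=0$ has a genuine gap, and it sits exactly where finite asymptotic dimension must do analytic work. First, the hybrid structure is not ``the $C_{0}$ structure near infinity'': a hybrid-controlled set only needs its propagation to tend to $0$ as the telescope coordinate $\pi$ tends to infinity, uniformly over the slices $X_{i}$, and these slices are non-compact whenever $W$ is infinite, so arbitrary bounded propagation is allowed inside each $X_{i}$. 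Second, even where the $C_{0}$ structure is relevant, Theorem \ref{th 3.18} does not give vanishing: it identifies $KX^{p}_{*}(\cdot_{0})$ and $K_{*}(B^{p}(\cdot_{0}))$ with the $K$-homology at infinity, which is typically nonzero; the vanishing statement for the coarsening space with the $C_{0}$ structure is Theorem \ref{th 4.5}, whose Eilenberg swindle uses that the collapsing maps $\Phi_{t}$ are properly supported and uniformly controlled for the $C_{0}$ structure, and this does not transfer to the hybrid structure because $\Phi_{t}$ is merely contractive and does not force propagations to tend to $0$ towards infinity. Concretely, your claim $K_{*}(B^{p}(V_{h}))=0$ for $V$ the union of stars about barycenters of top simplices already fails for $W$ a single point: then $X=[1,\infty)$, $V$ is coarsely a uniformly discrete sequence on which hybrid and fusion controlled operators coincide (as in the base case of Theorem \ref{th 5.10}), and $B^{p}(V_{h})\cong\varinjlim_{i}\bigl(\mathcal{K}(\ell^{p}(V\cap X_{i})\otimes\ell^{p})+\ell^{\infty}(V\setminus X_{i},\mathcal{K}(\ell^{p}))\bigr)$ has nonzero $K_{0}$ (compare the computation in Theorem \ref{th 3.17}). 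So the Mayer--Vietoris induction cannot be run with vanishing inputs on the pieces.

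The paper's architecture is in fact inverted relative to yours: the cutting-and-pasting induction on dimension (admissible binary decompositions, Lemma \ref{lemma A.8}, Lemma \ref{lemma 3.16}) is used to prove the comparison $K_{*}(B^{p}(X_{f}))\cong K_{*}(B^{p}(X_{h}))$ --- the step you dispatch in one sentence --- precisely because on the leaves and low-dimensional intersections the two structures give the \emph{same} operators, not zero groups. The vanishing $K_{*}(B^{p}(X_{h}))=0$ is proved directly, by applying the Eilenberg swindle of Lemma \ref{lemma 3.11} to the maps $\alpha_{t}=\Phi_{t}\circ\gamma_{j,t}\circ\beta_{j-2}\circ\cdots\circ\beta_{1}$, where the maps $\beta_{j}$ of Proposition \ref{prop 5.8} contract distances below any $\varepsilon$ at any given scale and are Lipschitz with a uniform constant; it is in constructing these maps that the bound $m+1$ on the degrees of the covers --- that is, finite asymptotic dimension, and not merely finite dimensionality of $X$ --- enters. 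To repair your argument you would need either such contracting maps (in which case the skeleton induction for the vanishing becomes unnecessary) or some substitute for the false piecewise vanishing; as written, the key vanishing claim is unsupported.
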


A key step in the proof is to use a more computable hybrid coarse structure. For this structure, we first calculate that the $K$-theory of the $L^{p}$ Roe algebra of the total coarsening space on bounded geometry metric spaces with finite asymptotic dimension is zero, and then prove that the $K$-theory groups of $L^{p}$ Roe algebras of the hybrid and fusion coarse structures are isomorphic. It follows that the obstructions vanish in such spaces.

This paper is organized as follows. In section 2, we will briefly review the basic ingredients for $C_{0}$ coarse geometry and $L^{p}$ coarse $K$-homology. In section 3, we prove the $C_{0}$ version of the $L^{p}$ coarse Baum-Connes conjecture. In section 4, we show that the $L^{p}$ coarse Baum-Connes conjecture holds for a uniformly discrete bounded geometry metric space $W$ if and only if the obstruction group $K_{*}(B^{p}(X_{f}))$ vanishes. Finally, in section 5, we demonstrate that the obstructions vanish under the assumption of finite asymptotic dimension, thereby providing a new proof of the $L^{p}$ coarse Baum-Connes conjecture.
\section{Preliminaries}
In this section, we will briefly recall some basic ingredients of $C_{0}$ coarse geometry and $L^{p}$ coarse $K$-homology. Moreover, we will introduce the notions of $L^{p}$ $C_{0}$ coarse $K$-homology, $L^{p}$ $K$-homology at infinity and $L^{p}$ coarse Baum-Connes assembly map. From the perspective of coarse geometry, we will be interested in proper metric spaces. Say that a metric space is proper if every closed bounded set is compact. Throughout this paper, we assume that $p\in [1,\infty)$.

\subsection{$C_{0}$ coarse geometry and nerve complex}
In this subsection, we review the definitions of $C_{0}$ coarse structure, anti-\v{C}ech sequence and nerve complex. We first recall from Roe's monograph \cite{Roe03} that a coarse structure is a family of subsets that records the large-scale properties of the underlying space.
\begin{definition}\cite{Roe03}
Let $X$ be a set, and let $\mathcal{E}$ be a family of subsets of $X\times X$. We say that $\mathcal{E}$ is a coarse structure on $X$ if it satisfies the following axioms:
\begin{itemize}
\item The diagonal $\bigtriangleup=\{(x,x)\mid x\in X\}$ is a member of $\mathcal{E}$;
\item $E\in\mathcal{E}$ and $F\subseteq E$ implies $F\in\mathcal{E}$;
\item $E,F\in\mathcal{E}$ implies $E\cup F\in \mathcal{E}$;
\item $E\in\mathcal{E}$ implies $E^{-1}:=\{(y,x)\mid (x,y)\in E\}\in\mathcal{E}$;
\item $E, F\in\mathcal{E}$ implies $E\circ F:=\{(x,y)\mid \exists z, s.t. (x,z)\in E \text{ and } (z,y)\in F\}\in\mathcal{E}$;
\end{itemize}
The pair $(X,\mathcal{E})$ is called a coarse space, and the elements of $\mathcal{E}$ are called controlled sets or entourages.
\end{definition}

In coarse spaces, we have the following concept of bounded sets. 
\begin{definition}\cite{Roe03}
Let $X$ be a coarse space, and let $B$ be a subset of $X$. We say that $B$ is bounded if $B\times B$ is controlled.
\end{definition}

The basic definitions of coarse map, closeness and coarse equivalence in coarse geometry are as follows.
\begin{definition}\cite{Roe03}
Let $f: X \rightarrow Y$ be any map between two proper coarse spaces. We say $f$ is 
\begin{itemize}
\item The map $f$ is proper if for any bounded subset $B$ of $Y$, the inverse image $f^{-1}(B)$ is bounded in $X$;
\item The map $f$ is bornologous if for each controlled subset $E$ of $X\times X$, the set $(f\times f)(E)$ is a controlled subset of $Y\times Y$;
\item The map $f$ is coarse if it is proper and bornologous;
\item Two maps $f,g: X\rightarrow Y$ are close if the set $\{(f(x),g(x))\mid x\in X\}$ is a controlled subset of $Y\times Y$;
\item A coarse map $f: X\rightarrow Y$ is called a coarse equivalence if there exists a coarse map $g: Y\rightarrow X$ such that $f\circ g$ and $g\circ f$ are close to the identities on $Y$ and $X$, respectively;
\item The spaces $X$ and $Y$ are coarsely equivalent if there exists a coarse equivalence $f: X\rightarrow Y$.
\end{itemize}
\end{definition}

The following definition plays an important role in studying the large-scale features of the underlying space.
\begin{definition}\cite{Roe96}
Let $(X, d)$ be a proper metric space. The bounded coarse structure on $X$ is the collection $\mathcal{E}$ of all those subsets $E\subseteq X\times X$ for which
$$
\sup\{d(x,y)\mid \text{ for all } (x,y)\in E\}<\infty.
$$
\end{definition}

\begin{example}
With the bounded coarse structure, the integer lattice $\mathbb{Z} ^{n}$ is coarsely equivalent to the $n$-dimensional Euclidean space.
\end{example}
 
In $C_{0}$ coarse geometry, the most fundamental concept is as follows.
\begin{definition}\cite{Wri03}
Let $(X, d)$ be a proper metric space. The $C_{0}$ coarse structure on $X$ is the collection $\mathcal{E}$ of all those subsets $E\subseteq X\times X$ for which for any $\varepsilon>0$, there exists a compact subset $K$ of $X$ such that
$$
 d(x,y)<\varepsilon, \text{ for all }(x,y)\in E\backslash (K\times K).
$$
\end{definition}

\begin{remark}
There is a difference between the $C_{0}$ coarse structure and the bounded coarse structure. In bounded coarse geometry, $S^{2}\times\mathbb{R}$ with the cusp metric is coarsely equivalent to $S^{2}\times\mathbb{R}$ with the product metric, while in $C_{0}$ coarse geometry, they are not coarsely equivalent since the former space is coarsely equivalent to a line, and the latter space is not \cite{Wri03}. Therefore, we denote by $X$ a metric space with the bounded coarse structure, and by $X_{0}$ the same space with the $C_{0}$ coarse structure. 
\end{remark}

Recall that the ordinary \v{C}ech cohomology focuses on finer and finer covers, while the anti-\v{C}ech system proposed by Roe focuses on coarser and coarser covers. To describe this system, we need to review the notions of the Lebesgue number and the diameter of a cover.
Let $\mathcal{U}$ be an open cover of a metric space $X$. Recall that the Lebesgue number of $\mathcal{U}$, denoted $Lebesgue(\mathcal{U})$, is the largest number $R$ such that for every open subset $V$ of $X$ of diameter at most $R$, there exists $U\in\mathcal{U}$ with $V\subseteq U$. The diameter of $\mathcal{U}$, denoted $\Diam(\mathcal{U})$, is the supremum of the diameters of the sets $U$ in $\mathcal{U}$.

\begin{definition}\cite{Roe93}
Let $X$ be a proper metric space, and let $\{\mathcal{U}_{i}\}^{\infty}_{i=1}$ be a sequence of open covers of $X$. We say that $\{\mathcal{U}_{i}\}^{\infty}_{i=1}$ is an anti-\v{C}ech sequence if $$\lim\limits_{i\rightarrow\infty}Lebesgue(\mathcal{U}_{i})=\infty \text{ and } \Diam(\mathcal{U}_{i})\leq Lebesgue(\mathcal{U}_{i+1})<\infty.$$
\end{definition}

In topology, the nerve complex of an open cover is an abstract complex that records the pattern of intersections between open sets in the cover. This notion is defined as follows.
\begin{definition}\cite{Wri05}
Let $X$ be a metric space, and let $\mathcal{U}$ be an open cover of $X$. The nerve $N_{\mathcal{U}}$ of $\mathcal{U}$  is an abstract simplicial complex that has the members of $\mathcal{U}$ as vertices, and where $[U_{1}], [U_{2}], \cdots, [U_{k}]$ span a simplex if and only if $U_{1}\cap\cdots\cap U_{k}\neq\varnothing$.
\end{definition}
\begin{remark}
The anti-\v{C}ech property indicates that for every $V\in\mathcal{U}_{i}$, there exists $U\in\mathcal{U}_{i+1}$ such that $V\subseteq U$. For this reason, there are simplicial connecting maps $$\phi_{i}: N_{\mathcal{U}_{i}}\rightarrow N_{\mathcal{U}_{i+1}}, [V]\mapsto [U].$$
\end{remark}
Here are some concrete examples.
\begin{example}
Let $X=[0,1]$ and $\mathcal{U}=\{U_{1},U_{2}\}$, where $U_{1}=[0,{2}/{3})$ and $U_{2}=({1}/{2},1]$. Then the nerve $N_{\mathcal{U}}$  is an abstract $1$-simplex.
\end{example}

\begin{example}
Let $X=S^{1}$ and $\mathcal{U}=\{U_{1},U_{2},U_{3}\}$, where each $U_{i}$ is an arc covering one third of $S^{1}$, with some overlap with the adjacent $U_{i}$. Then the nerve $N_{\mathcal{U}}$ is an unfilled triangle.
\end{example}

\subsection{$L^{p}$ $C_{0}$ coarse $K$-homology and $L^{p}$ $K$-homology at infinity}
In this subsection, we will introduce the definitions of $L^{p}$ $C_{0}$ coarse $K$-homology and $L^{p}$ $K$-homology at infinity. 

To define two $L^{p}$ $K$-homology theories, we first need to review the notion of an $L^{p}$ module.
\begin{definition}\cite{ZZ21}
Let $X$ be a proper metric space. An $L^{p}$-$X$-module is an $L^{p}$ space $E^{p}_{X}=\ell^{p}(Z_{X})\otimes\ell^{p}$ equipped with a natural pointwise multiplication action of $C_{0}(X)$ by restricting to $Z_{X}$, where $Z_{X}$ is a countable dense subset in $X$, $\ell^{p}=\ell^{p}(\mathbb{N})$, and $C_{0}(X)$ is the algebra of all complex-valued continuous functions on $X$ which vanish at infinity.
\end{definition}

\begin{remark}
For a fixed $p\in [1,\infty)$, different $L^{p}$ modules are isometrically isomorphic, and their corresponding $L^{p}$ Roe algebras and $L^{p}$ localization algebras are also isomorphic. Therefore, we only need to select a suitable $L^{p}$ module in the appropriate situation.
\end{remark}

The following key definition relates the operators on $E^{p}_{X}$ to the structure of $X$.
\begin{definition}\cite{ZZ21}
Let $E^{p}_{X}$ be an $L^{p}$-$X$-module, and let $E^{p}_{Y}$ be an $L^{p}$-$Y$-module. Let $T: E^{p}_{X}\rightarrow E^{p}_{Y}$ be a bounded linear operator. The support of $T$, denoted $\supp(T)$, consists of all points $(x,y)\in X\times Y$ such that $\chi_{V}T\chi_{U}\neq 0$ for all open neighborhoods $U$ of $x$ and $V$ of $y$, where $\chi_{U}$ and $\chi_{V}$ are the characteristic functions of $U$ and $V$, respectively.
\end{definition}

To describe the notions of $L^{p}$ Roe algebra and $L^{p}$ localization algebra, we need to review the following definition.
\begin{definition}\cite{ZZ21}
Let $E^{p}_{X}$ be an $L^{p}$-$X$-module, and let $T$ be a bounded linear operator acting on $E^{p}_{X}$.
\begin{itemize}
\item The propagation of $T$ is defined to be $\prop(T):=\sup\{d(x,y)\mid (x,y)\in\supp(T)\}$;
\item $T$ is said to be locally compact if $\chi_{K}T$ and $T\chi_{K}$ are compact operators for any compact subset $K$ of $X$.
\end{itemize}
\end{definition}

Recall that an operator $T$ on $E^{p}_{X}$ is controlled if the set $\supp(T)$ is controlled. Then we can introduce the notion of $L^{p}$ Roe algebra.

\begin{definition}\label{Roe}
Let $E^{p}_{X}$ be an $L^{p}$-$X$-module. The $L^{p}$ Roe algebra $B^{p}(E^{p}_{X})$ of $X$ is the norm closure of the algebra of all locally compact, controlled operators acting on $E^{p}_{X}$.
\end{definition}

\begin{remark}
The above definition is equivalent to the one proposed by Zhang and Zhou in \cite{ZZ21}. Below, we will write $B^{p}(X)$ for $B^{p}(E^{p}_{X})$, since the algebra $B^{p}(E^{p}_{X})$ does not depend on the choice of the $L^{p}$-$X$-module $E^{p}_{X}$ (see Corollary 2.9 in \cite{ZZ21}). 
\end{remark}

The definition of covering isometries for coarse maps between two $L^{p}$ modules is as follows.
\begin{definition}
Let $E^{p}_{X}$ be an $L^{p}$-$X$-module, and let $E^{p}_{Y}$ be an $L^{p}$-$Y$-module. Let $f: X\rightarrow Y$ be a coarse map. We say that an isometry $V_{f}: E^{p}_{X}\rightarrow E^{p}_{Y}$ is a covering isometry for $f$ if the set $\{(y,f(x))\mid (y,x)\in\supp(V_{f})\}$ is controlled.
\end{definition}

The $L^{p}$ analogue of an involution $V^{*}_{f}$ of an isometry $V_{f}$ in the $C^{*}$-algebra case is  then as follows.
\begin{definition}
Let $f: X\rightarrow Y$ be a coarse map, and let $V_{f}$ be a covering isometry for $f$. We say that a contractible operator $V^{+}_{f}: E^{p}_{Y}\rightarrow E^{p}_{X}$ is an involution of $V_{f}$ if $V^{+}_{f}V_{f}=I$ and $\supp(V^{+}_{f})=\supp(V_{f})^{-1}$.
We call $(V_{f},V^{+}_{f})$ a covering isometry pair for $f$.
\end{definition}

The next key definition is the basic ingredient of $L^{p}$ $K$-homology theories.
\begin{definition}\cite{ZZ21}
Let $X$ be a proper metric space. The $L^{p}$ localization algebra of $X$, denoted by $B^{p}_{L}(X)$, is defined to be the norm closure of the algebra of all bounded and uniformly norm-continuous functions $f$ from $[0,\infty)$ to $B^{p}(X)$ such that
$$
\prop(f(t)) \text{ is uniformly bounded and } \prop(f(t))\rightarrow 0 \text{ as }t\rightarrow\infty.
$$
The propagation of $f$ is defined to be $\max\{\prop(f(t)): t\in[0,\infty)\}$.
\end{definition}

Subsequently, the definition of $L^{p}$ coarse $K$-homology is coming.
\begin{definition}[$L^{p}$ coarse $K$-homology]\cite{ZZ21}\label{def 2.21}
Let $X$ be a proper metric space. The $L^{p}$ coarse $K$-homology groups of $X$ are the groups 
$$KX^{p}_{*}(X):=\varinjlim\limits_{i}K_{*}(B^{p}_{L}(N_{\mathcal{U}_{i}})),$$ where $\mathcal{U}_{i}$ is any anti-\v{C}ech sequence of $X$, and the maps on $K$-homology groups are induced by the connecting maps $\phi_{i}: N_{\mathcal{U}_{i}}\rightarrow N_{\mathcal{U}_{i+1}}$.
\end{definition}

\begin{remark}
In Definition \ref{def 2.21}, the anti-\v{C}ech system can select a fixed sequence since the direct limit does not depend on the choice of anti-\v{C}ech sequences.
\end{remark}

\begin{remark}
The coarsening of a partial order on the collection of all covers of $X$ is defined as follows: 
	$$\mathcal{U}_{1}\preccurlyeq \mathcal{U}_{2} \text{ if for every } U_{1}\in\mathcal{U}_{1}, \text{ there exists } U_{2}\in \mathcal{U}_{2}\text{ such that } U_{1}\subseteq U_{2}.$$
\end{remark}

To define the $L^{p}$ $C_{0}$ coarse $K$-homology, we need to review the notions of locally finite $C_{0}$ open covers and separable spaces. A cover $\mathcal{U}$ of $X$ is called locally finite in $X$ if each point $x\in X$ has a neighborhood ${U}$ in $X$ which intersects only finitely many elements of $\mathcal{U}$. Say that a cover $\mathcal{U}$ of $X$ is a $C_{0}$ open cover if there exists a positive $C_{0}$ function $p$ on $X$ such that for every $U\in \mathcal{U}$, 
$\Diam(U)\leq \inf\{p(x)\mid x\in U\}.$ 
Say that a topological space is separable if it contains a countable dense subset.
\begin{definition}[$L^{p}$ $C_{0}$ coarse $K$-homology]
Let $X$ be a proper separable coarse space, and let $\mathcal{C}_{0}(X)$ denote the collection of all locally finite $C_{0}$ open covers of $X$, directed by the above relation $\mathcal{U}_{1}\preccurlyeq\mathcal{U}_{2}$. The $L^{p}$ $C_{0}$ coarse $K$-homology groups of $X$ are the groups $$KX^{p}_{*}(X_{0}):=\varinjlim\limits_{\mathcal{U}\in \mathcal{C}_{0}(X)}K_{*}(B^{p}_{L}(N_{\mathcal{U}})),$$ where the maps on $K$-homology groups are induced by the connecting maps $\phi: N_{\mathcal{U}_{1}}\rightarrow N_{\mathcal{U}_{2}}$ for $\mathcal{U}_{1}\preccurlyeq \mathcal{U}_{2}$.
\end{definition}

\begin{remark}
Unlike the bounded coarse structure, the inductive limit of $L^{p}$ coarse $K$-homology on the $C_{0}$ coarse structure could take over an uncountable directed system, since the $C_{0}$ structure may not be generated by a countable family of its controlled sets.
\end{remark}

In the next section, we will show that in many spaces, the $L^{p}$ $C_{0}$ coarse $K$-homology is isomorphic to the $L^{p}$ $K$-homology at infinity, which is defined as follows.
\begin{definition}[$L^{p}$ $K$-homology at infinity]
Let $X$ be a locally compact topological space. The $L^{p}$ $K$-homology of $X$ at infinity is 
$$K^{p,\infty}_{*}(X):=\varinjlim\limits_{C\subseteq X \text{ compact}}K_{*}(B^{p}_{L}(X/C)),$$ where the directed system is given by inclusions, and for $C_{1}\subseteq C_{2}$ the map $K_{*}(B^{p}_{L}(X/C_{1}))$$\rightarrow K_{*}(B^{p}_{L}(X/C_{2}))$ is induced by the quotient map.
\end{definition}

\begin{remark}
The term `infinity' here is because when the compact set $C$ is larger and larger, the underlying quotient space $X/C$ only remains the information at infinity.
\end{remark}

\subsection{$L^{p}$ coarse Baum-Connes assembly map}
In this subsection, we will establish the $L^{p}$ coarse Baum-Connes assembly map.

We begin with some definitions of a class of good metrics given in \cite{Wri05}, including the path metric and the uniform spherical metric.
\begin{definition}\cite{Wri05}
Let $(X,d)$ be a proper metric space. The associated path metric is
$$
d_{l}(x,x')=\inf\{l(\gamma)\mid \gamma: [0,1]\rightarrow X \text{ a path with }\gamma(0)=x, \gamma(1)=x'  \}, 
$$
where the length of $\gamma$ is given by
$$
l(\gamma)=\sup\left\{\sum\limits_{i=1}^{n}d(\gamma(t_{i-1}),\gamma(t_{i}))\middle\vert\ n\in\mathbb{N}, 0=t_{0}\leq\cdots\leq t_{n}=1\right\}.
$$
If $d_{l}=d$, we say that $(X,d)$ is a path metric space.
\end{definition}

\begin{definition}\cite{Wri05}
The spherical $m$-simplex is the intersection of $m$-sphere in $\mathbb{R}^{m+1}$ with the positive cone, equipped with the spherical path metric. Barycentric coordinates on this simplex are defined by taking convex combinations of the vertices $(1,0,\cdots,0)$, $\cdots$, $(0,\cdots,0,1)$, and then projecting radially onto the sphere. 
\end{definition}

Say that a simplicial complex is locally finite if each of its vertices belongs to only finitely many simplices. Below, the uniform spherical metric is an optimization of the spherical path metric. 
\begin{definition}\cite{Wri05}\label{def A.4}
A uniform spherical metric on a locally finite simplicial complex is a metric with the following properties:
\begin{itemize}
\item each simplex is isometric to the spherical $m$-simplex;
\item the restriction of the metric to each component is the path metric;
\item the components are far apart, meaning that for $R>0$, there exists a finite subcomplex $K$ such that if $x,y$ lie in different components and $x\notin K$ or $y\notin K$, then $d(x,y)>R$.
\end{itemize}
\end{definition}

To describe the next theorem, we need to review the definition of the uniformly bounded open cover.
\begin{definition}\cite{Wri05}
Let $X$ be a proper separable coarse space, and let $\mathcal{U}$ be an open cover of $X$. We say that $\mathcal{U}$ is a uniformly bounded open cover if $\bigcup\limits_{U\in\mathcal{U}}U\times U$ is controlled.
\end{definition}

A proper separable coarse space is paracompact, thus it admits a locally finite open cover. The following lemma demonstrates this fact.
\begin{lemma}\cite{Wri05}\label{lemma 2.9}
Let $X$ be a proper separable coarse space. Then for any uniformly bounded open cover $\mathcal{U}_{1}$ of $X$, there exists a locally finite uniformly bounded open cover $\mathcal{U}_{2}$ with $\mathcal{U}_{1}\preccurlyeq \mathcal{U}_{2}$.
\end{lemma}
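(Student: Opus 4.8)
The plan is to combine the paracompactness of $X$ --- which supplies a locally finite open \emph{refinement} of $\mathcal{U}_1$ --- with a ``starring'' operation relative to $\mathcal{U}_1$, which reverses the direction of refinement while keeping the enlarged sets uniformly bounded. Write $E=\bigcup_{U\in\mathcal{U}_1}U\times U$; this is a controlled, symmetric set containing the diagonal, and for $A\subseteq X$ put $\operatorname{st}(A,\mathcal{U}_1)=\bigcup\{U\in\mathcal{U}_1:U\cap A\neq\varnothing\}$. Two elementary facts will be used repeatedly: first, $\operatorname{st}(A,\mathcal{U}_1)$ is open (a union of open sets) and satisfies $\operatorname{st}(A,\mathcal{U}_1)\subseteq E[A]:=\{y:\exists\,a\in A,\ (a,y)\in E\}$; second, if $A$ is bounded then so is $E[A]$, since $E[A]\times E[A]\subseteq E\circ(A\times A)\circ E$, whence by properness $E[A]$ is precompact and $\overline{\operatorname{st}(A,\mathcal{U}_1)}$ is compact.

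First I would use paracompactness to fix a locally finite open refinement $\mathcal{V}=\{V_\beta\}$ of $\mathcal{U}_1$; after deleting empty members, $\mathcal{V}$ still covers $X$, and each $V_\beta$ lies in some member of $\mathcal{U}_1$, hence is bounded. Then I would set
$$
\mathcal{U}_2=\bigl\{\,W_\beta:=\operatorname{st}(V_\beta,\mathcal{U}_1)\ :\ \beta\,\bigr\}
$$
and check the four required properties. Each $W_\beta$ is open, and $V_\beta\subseteq W_\beta$, so $\mathcal{U}_2$ covers $X$. For $\mathcal{U}_1\preccurlyeq\mathcal{U}_2$: given nonempty $U\in\mathcal{U}_1$, pick $x\in U$ and $\beta$ with $x\in V_\beta$; then $U\cap V_\beta\neq\varnothing$, so $U\subseteq W_\beta$. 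For uniform boundedness: if $x,x'\in W_\beta$, choose $v\in U\cap V_\beta$ with $x\in U$ and $v'\in U'\cap V_\beta$ with $x'\in U'$; since $(x,v),(v,v'),(v',x')\in E$ (the middle one because $v,v'\in V_\beta$ is contained in a member of $\mathcal{U}_1$), we get $(x,x')\in E\circ E\circ E$, so $\bigcup_\beta W_\beta\times W_\beta\subseteq E\circ E\circ E$ is controlled.

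The remaining, and most delicate, point is local finiteness of $\mathcal{U}_2$, and I expect this to be where the actual work sits. Using local compactness of $X$, I would pick for each $x\in X$ a bounded open neighbourhood $O$. If $O\cap W_\beta\neq\varnothing$, then some $U\in\mathcal{U}_1$ meets both $O$ and $V_\beta$, so $V_\beta$ meets $\operatorname{st}(O,\mathcal{U}_1)$, hence the compact set $\overline{\operatorname{st}(O,\mathcal{U}_1)}$; since a locally finite family meets a compact set in only finitely many members, only finitely many $\beta$ satisfy $O\cap W_\beta\neq\varnothing$. This is exactly the step that consumes the hypotheses: properness of the coarse space (to turn ``bounded'' into ``precompact''), local compactness (for the neighbourhood $O$), paracompactness (for $\mathcal{V}$), and --- crucially --- the \emph{uniform} boundedness of $\mathcal{U}_1$, without which the stars $W_\beta$ need not be bounded and the whole scheme breaks down. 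Once local finiteness is established, $\mathcal{U}_2$ is the desired cover and the lemma follows.
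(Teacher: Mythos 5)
Your proof is correct. Note that the paper does not actually prove this lemma --- it is imported verbatim from \cite{Wri05} --- so there is no in-text argument to compare against; on its own terms, your route (take a locally finite open refinement $\mathcal{V}=\{V_\beta\}$ of $\mathcal{U}_1$ by paracompactness, then coarsen by passing to the stars $W_\beta=\operatorname{st}(V_\beta,\mathcal{U}_1)$) is a clean and complete way to obtain simultaneously ``locally finite'', ``uniformly bounded'' and $\mathcal{U}_1\preccurlyeq\mathcal{U}_2$. The verifications all check out: $E=\bigcup_{U\in\mathcal{U}_1}U\times U$ is controlled and symmetric, each $W_\beta\times W_\beta$ lies in $E\circ E\circ E$ precisely because $V_\beta$ is contained in a single member of $\mathcal{U}_1$, and local finiteness follows since $\operatorname{st}(O,\mathcal{U}_1)\subseteq E[O]$ is bounded, hence relatively compact by properness, and a locally finite family meets a compact set only finitely often. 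Two minor remarks. The appeal to local compactness for the bounded open neighbourhood $O$ is the one slightly loose spot (relative compactness alone does not give coarse boundedness without coarse connectedness); it is harmless, because you may simply take $O$ to be any member of $\mathcal{U}_1$ containing the given point, which is open and bounded by hypothesis, or a slice $D[x]$ of a controlled open neighbourhood $D$ of the diagonal, which properness supplies. And your use of paracompactness is not circular: it is exactly what the paper asserts in the sentence preceding the lemma, and it also follows directly from proper plus separable ($X$ is locally compact and $\sigma$-compact, being covered by the slices $D[x_n]$ over a countable dense set), whereas the lemma itself is the strictly stronger statement that the locally finite cover can be chosen uniformly bounded and coarser than $\mathcal{U}_1$, which your star construction delivers.
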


The following technical proposition is a vital step in the next theorem. 
\begin{proposition}\cite{Wri05}\label{prop 2.29}
Let $X$ be a proper separable coarse space, and let $\mathcal{U}$ be a locally finite uniformly bounded open cover of $X$. Equip $N_{\mathcal{U}}$ with a uniform spherical metric, and the corresponding bounded coarse structure. Let $\eta=\eta_{\mathcal{U}}: N_{\mathcal{U}}\rightarrow X$ be any map such that if $y$ lies in the star about a vertex $[V]$ of $N_{\mathcal{U}}$ then $\eta(y)\in V$. Then $\eta$ is coarse and any two such maps are close.
\end{proposition}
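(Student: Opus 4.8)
The plan is to extract everything from the single observation that the defining property of $\eta$ determines $\eta(y)$ up to one cover element. Since $\mathcal{U}$ is uniformly bounded, $L:=\sup_{U\in\mathcal{U}}\Diam(U)<\infty$ and $\bigcup_{U\in\mathcal{U}}U\times U$ is controlled in $X$. For $y\in N_{\mathcal{U}}$ write $\sigma(y)$ for the carrier simplex of $y$ and $W(y)$ for its vertex set; then $y$ lies in the open star of a vertex $[V]$ if and only if $[V]\in W(y)$, so the hypothesis on $\eta$ says precisely that $\eta(y)\in\bigcap_{[V]\in W(y)}V$. This intersection is nonempty because $W(y)$ spans a simplex of $N_{\mathcal{U}}$ (which also shows such maps exist: pick $x_{\sigma}$ in the intersection of the cover elements indexed by the vertices of $\sigma$ for each simplex $\sigma$, and set $\eta(y):=x_{\sigma(y)}$). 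In particular $\eta(y)\in V$ for every $[V]\in W(y)$.

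Closeness is then immediate: if $\eta,\eta'$ both satisfy the hypothesis and $[V]\in W(y)$, then $\eta(y),\eta'(y)\in V$, so $\{(\eta(y),\eta'(y)):y\in N_{\mathcal{U}}\}\subseteq\bigcup_{U\in\mathcal{U}}U\times U$, which is controlled. For properness, let $B\subseteq X$ be bounded; then $\overline{B}$ is compact because $X$ is proper, so local finiteness of $\mathcal{U}$ leaves only finitely many members $[V_{1}],\dots,[V_{N}]$ of $\mathcal{U}$ meeting $B$. If $\eta(y)\in B$ then every $[V]\in W(y)$ satisfies $V\cap B\neq\varnothing$, so $W(y)\subseteq\{[V_{1}],\dots,[V_{N}]\}$, whence $\eta^{-1}(B)$ lies in the union of the open stars of $[V_{1}],\dots,[V_{N}]$; this union sits inside a finite subcomplex by local finiteness, and a finite subcomplex is compact, hence bounded.

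The substantive part is bornologousness. Let $E\subseteq N_{\mathcal{U}}\times N_{\mathcal{U}}$ be controlled and set $S:=\sup\{d_{N_{\mathcal{U}}}(y,y'):(y,y')\in E\}<\infty$. For a pair $(y,y')\in E$ in distinct components of $N_{\mathcal{U}}$, the ``components far apart'' axiom with $R=S$ puts both $y$ and $y'$ in a fixed finite subcomplex $K_{S}$; as $K_{S}$ has finitely many simplices, only finitely many values of $(W(y),W(y'))$ arise, so $(\eta\times\eta)$ sends these pairs into a finite union of products $V_{i}\times V_{j}$ with $[V_{i}],[V_{j}]$ vertices of $K_{S}$, each of which is controlled since $X$ is a metric space. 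For a pair $(y,y')\in E$ in one component, choose a path from $y$ to $y'$ of length less than $S+1$; this produces an edge path $[V]=[V_{0}],[V_{1}],\dots,[V_{n}]=[V']$ in $N_{\mathcal{U}}$ with $[V]\in W(y)$, $[V']\in W(y')$, $V_{k}\cap V_{k+1}\neq\varnothing$ for all $k$, and $n\leq\kappa(S)$, where $\kappa$ depends only on $S$ and $\dim N_{\mathcal{U}}$. Picking $z_{k}\in V_{k}\cap V_{k+1}$ and using $\eta(y)\in V_{0}$, $\eta(y')\in V_{n}$, the triangle inequality gives $d_{X}(\eta(y),\eta(y'))\leq(\kappa(S)+1)L$. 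Together the two cases make $(\eta\times\eta)(E)$ controlled, so $\eta$ is coarse.

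The one real obstacle is the geometric input in the last step: that a metrically short path in $N_{\mathcal{U}}$ meets only a combinatorially bounded part of the complex. This is exactly where the word ``uniform'' in the uniform spherical metric---and the finite-dimensionality of $N_{\mathcal{U}}$---are used; without a bound on the dimension the spherical simplices become metrically thin and a short path can cross arbitrarily many of them, and the statement fails. In the finite-dimensional case it follows from the bi-Lipschitz comparison, with constants depending only on $\dim N_{\mathcal{U}}$, between the spherical $m$-simplex and the standard $m$-simplex: patched over the complex, this makes each component of $N_{\mathcal{U}}$ quasi-isometric to the $1$-skeleton of that component with the combinatorial metric, which yields the bound on $\kappa$ used above.
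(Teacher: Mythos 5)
Your overall skeleton is the right one (closeness via the controlled set $A=\bigcup_{U\in\mathcal{U}}U\times U$, properness from local finiteness plus properness of $X$, bornologousness by converting a short path in $N_{\mathcal{U}}$ into a chain of cover elements), but two points are genuine problems. First, the proposition is about a proper separable \emph{coarse} space $X$, not a metric space, and your argument repeatedly invokes a metric on $X$: ``uniformly bounded'' here means that $A$ is controlled, not that $L:=\sup_{U}\Diam(U)$ is finite; the triangle-inequality estimate $d_{X}(\eta(y),\eta(y'))\leq(\kappa(S)+1)L$ has no meaning in this generality; and ``$V_{i}\times V_{j}$ is controlled since $X$ is a metric space'' assumes away exactly what has to be checked. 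The repair is to do all the bookkeeping with controlled sets: from the chain $V_{0},\dots,V_{n}$ with $z_{k}\in V_{k}\cap V_{k+1}$, $\eta(y)\in V_{0}$, $\eta(y')\in V_{n}$ one gets $(\eta(y),\eta(y'))\in A\circ A\circ\cdots\circ A$ ($n+1$ factors), which is controlled by the axioms once $n$ is bounded in terms of $S$; and the finitely many exceptional pairs produced by the ``components far apart'' clause must likewise be absorbed into a controlled set by a coarse argument (each $V_{i}$ is bounded, finitely many pairs occur), not by appealing to a metric. Since the paper applies this proposition to non-metric structures such as the $C_{0}$ structure, this is a real gap as written, though a fixable one.

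The second problem is your closing paragraph. Local finiteness of $\mathcal{U}$ does not bound $\dim N_{\mathcal{U}}$ (the nerve is locally finite but may contain simplices of arbitrarily large dimension), so a bound $\kappa(S)$ depending on $\dim N_{\mathcal{U}}$, or a bi-Lipschitz comparison with Euclidean simplices whose constants depend on the dimension, is not available; and the assertion that the statement fails without a dimension bound contradicts the proposition itself, which assumes none. The role of the \emph{uniform spherical} metric is precisely to make the key estimate dimension-free: in a spherical simplex disjoint faces are at distance $\pi/2$, and the distance from a vertex to a simplex in the same component is $\pi/2$ times the length of the shortest simplicial path (Lemma \ref{lemma A.5}), so there are no shortcuts through interiors and a path of length at most $S$ meets at most roughly $2S/\pi+2$ stars, independently of the dimensions of the simplices it crosses. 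So the geometric input you isolate is indeed the crux, but your justification of it is the wrong mechanism and would not prove the proposition in the generality in which it is stated and used.
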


\begin{remark}
We have a well-defined map $\varinjlim\limits_{\mathcal{U}}K_{*}(B^{p}(N_{\mathcal{U}}))\rightarrow K_{*}(B^{p}(X))$ as the fact that if $\phi: N_{\mathcal{U}_{1}}\rightarrow N_{\mathcal{U}_{2}}$ is a connecting map, $\eta_{\mathcal{U}_{1}}:N_{\mathcal{U}_{1}}\rightarrow X$ and  $\eta_{\mathcal{U}_{2}}:N_{\mathcal{U}_{2}}\rightarrow X$ are coarse maps obtained by Proposition \ref{prop 2.29},  then $\eta_{\mathcal{U}_{1}}$ is close to $\eta_{\mathcal{U}_{2}}\circ\phi$.
\end{remark}

We need the following identification to formulate the right-hand side of the $L^{p}$ coarse Baum-Connes assembly map. 
\begin{theorem}
Let $X$ be a proper separable coarse space. Then the map 
$$
\varinjlim\limits_{\mathcal{U}}K_{*}(B^{p}(N_{\mathcal{U}}))\rightarrow K_{*}(B^{p}(X))
$$
is an isomorphism, where the direct limit is taken over the directed system of locally finite uniformly bounded open covers $\mathcal{U}$.
\end{theorem}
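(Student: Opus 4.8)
The plan is to prove this isomorphism by exhibiting the direct limit $\varinjlim_{\mathcal{U}} K_*(B^p(N_{\mathcal{U}}))$ as the $K$-theory of a single $L^p$ Roe algebra built from the whole directed system, and then identifying that algebra with $B^p(X)$ up to a change of $L^p$-$X$-module. Concretely, fix an $L^p$-$X$-module $E^p_X$. For each locally finite uniformly bounded open cover $\mathcal{U}$, Proposition \ref{prop 2.29} supplies a coarse map $\eta_{\mathcal{U}}\colon N_{\mathcal{U}}\to X$, unique up to closeness; choosing a covering isometry pair $(V_{\eta_{\mathcal{U}}},V^+_{\eta_{\mathcal{U}}})$ for it gives a $*$-preserving-type homomorphism $(\mathrm{Ad}\,V_{\eta_{\mathcal{U}}})\colon B^p(N_{\mathcal{U}})\to B^p(X)$ on the level of controlled locally compact operators, bounded because $V_{\eta_{\mathcal{U}}}$ is an isometry and $V^+_{\eta_{\mathcal{U}}}$ is contractive. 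The remark following Proposition \ref{prop 2.29} says that for a connecting map $\phi\colon N_{\mathcal{U}_1}\to N_{\mathcal{U}_2}$ the composite $\eta_{\mathcal{U}_2}\circ\phi$ is close to $\eta_{\mathcal{U}_1}$, so these homomorphisms are compatible with the connecting maps up to closeness, and since close coarse maps induce the same map on $K$-theory of Roe algebras, we obtain a well-defined map on the direct limit. That is the map in the statement.

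For surjectivity and injectivity I would argue that the map is already an isomorphism \emph{before} passing to $K$-theory, in the sense that $B^p(X)$ is the closure of the increasing union of the images of the $B^p(N_{\mathcal{U}})$. First I would check exhaustion: any controlled, locally compact operator $T$ on $E^p_X$ has support contained in some controlled set $E$, and by Lemma \ref{lemma 2.9} together with properness and separability of $X$ one can find a locally finite uniformly bounded open cover $\mathcal{U}$ whose associated ``$\mathcal{U}$-scale'' is coarser than $E$; the map $\eta_{\mathcal{U}}$ is then a coarse equivalence onto its image in the relevant sense, and $T$ can be pulled back through $V_{\eta_{\mathcal{U}}}$ to a controlled locally compact operator on $N_{\mathcal{U}}$ whose image is $T$ modulo something of strictly smaller propagation, which one absorbs by a standard iteration/approximation argument in the norm closure. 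This shows the union of images is dense, giving surjectivity of the direct-limit map on $K$-theory. For injectivity I would use the same device in reverse: if a class in $K_*(B^p(N_{\mathcal{U}}))$ dies in $K_*(B^p(X))$, the nullhomotopy takes place using finitely many operators, each controlled; enlarging $\mathcal{U}$ to a cover $\mathcal{U}'$ coarse enough to contain all their supports and large enough that $\eta_{\mathcal{U}'}$ admits a coarse inverse ``up to $\mathcal{U}'$'', the nullhomotopy is pulled back to $B^p(N_{\mathcal{U}'})$, so the class is already $0$ at the finite stage $\mathcal{U}'$. Throughout I would invoke that $B^p(X)$ is independent of the choice of $L^p$-$X$-module (the remark after Definition \ref{Roe}), so the various modules $\ell^p(Z_{N_{\mathcal{U}}})\otimes\ell^p$ and $E^p_X$ may be compared freely.

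I would organize the technical heart as one lemma: for every controlled set $E$ of $X$ there is a locally finite uniformly bounded open cover $\mathcal{U}$ and an ``inverse coarse map'' $\theta_{\mathcal{U}}\colon X\to N_{\mathcal{U}}$ with $\eta_{\mathcal{U}}\circ\theta_{\mathcal{U}}$ close to $\mathrm{id}_X$ on the $E$-scale, and $\theta_{\mathcal{U}}\circ\eta_{\mathcal{U}}$ close to $\mathrm{id}_{N_{\mathcal{U}}}$; $\theta_{\mathcal{U}}$ is obtained from a partition of unity subordinate to $\mathcal{U}$ by sending $x$ to the point of $N_{\mathcal{U}}$ with the corresponding barycentric coordinates. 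Once this lemma is in hand, the conjugations by covering isometries for $\eta_{\mathcal{U}}$ and $\theta_{\mathcal{U}}$ give mutually inverse maps on the relevant subalgebras up to propagation that shrinks as one increases $\mathcal{U}$, and the direct-limit isomorphism on $K$-theory follows from continuity of $K$-theory under inductive limits of Banach algebras and from the fact, used already in \cite{ZZ21}, that close covering isometry pairs induce the same map on $K$-theory.

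The main obstacle I anticipate is the bookkeeping around \emph{propagation control} in the $L^p$ (Banach-algebra) setting: unlike the $C^*$ case there is no functional calculus or polar decomposition to clean up the isometries $V_{\eta_{\mathcal{U}}}$, so one must track that $\mathrm{Ad}\,V_{\eta_{\mathcal{U}}}$ sends operators of propagation $r$ on $N_{\mathcal{U}}$ to operators whose support sits inside an explicit controlled enlargement on $X$, and that the error terms produced when pulling an operator on $X$ back to $N_{\mathcal{U}}$ and pushing it forward again are genuinely of smaller propagation so that a geometric-series/telescoping argument converges in operator norm. Getting this uniformity right — essentially redoing, in the $L^p$ world, the comparison between a space and the nerve of one of its uniformly bounded covers — is where the real work lies; everything else is formal manipulation of direct limits and the already-cited independence of $B^p(X)$ from the module.
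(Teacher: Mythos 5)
There is a genuine gap at the center of your argument: the claim that the map is ``already an isomorphism before passing to $K$-theory, in the sense that $B^{p}(X)$ is the closure of the increasing union of the images of the $B^{p}(N_{\mathcal{U}})$.'' First, the images $Ad_{V_{\eta_{\mathcal{U}}}}(B^{p}(N_{\mathcal{U}}))$ do not form an increasing family: for $\mathcal{U}_{1}\preccurlyeq\mathcal{U}_{2}$ you only know that $\eta_{\mathcal{U}_{1}}$ and $\eta_{\mathcal{U}_{2}}\circ\phi$ are \emph{close}, which gives agreement of induced maps on $K$-theory, not containment of images inside $B^{p}(X)$. Second, and more seriously, the proposed repair --- pull $T$ back through $V_{\eta_{\mathcal{U}}}$, push it forward, and ``absorb'' the discrepancy, which is ``of strictly smaller propagation,'' by a telescoping/geometric-series argument in norm --- confuses small propagation with small norm. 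The operator $V_{1}V_{2}\,T\,(V_{1}V_{2})^{+}$ (in the paper's notation) has the same $K$-theory class as $T$ because $V_{1}V_{2}$ is a covering isometry for the identity, but it is not norm-close to $T$; indeed every element of the image satisfies $eSe=S$ for the idempotent $e=V_{1}V_{1}^{+}$, so operators acting nontrivially on a complement of the range of $V_{1}$ cannot be norm-approximated by the image at all. Hence the density claim is unjustified (and in general false), and with it both your surjectivity and your injectivity arguments collapse, since they are built on exactness ``at the algebra level.''

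The paper's proof shows what the correct mechanism is, and it is genuinely $K$-theoretic rather than norm-approximative: write $B^{p}(X)=\varinjlim_{C}A_{C}$ over controlled sets $C$, where $A_{C}$ consists of operators supported in $C$; for each $C$ choose, via Lemma \ref{lemma 2.9}, a locally finite uniformly bounded cover $\mathcal{U}_{C}$ refining $\widetilde{\mathcal{U}_{C}}=\{U\mid U\times U\subseteq C\}$, and observe that conjugation by a covering isometry for the map $\psi_{\mathcal{U}_{C}}\colon x\mapsto[U]$ --- which need \emph{not} be a coarse map, contrary to what your key lemma demands of $\theta_{\mathcal{U}}$ --- still carries $A_{C}$ into $B^{p}(N_{\mathcal{U}_{C}})$, with the explicit propagation bound $2R+\pi$ coming from the fact that the supports lie in $C$. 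One then uses that $V_{1}V_{2}$ and $V_{2}V_{1}$ are covering isometries for the identities to conclude $K_{*}(B_{C})\cong K_{*}(B^{p}(N_{\mathcal{U}_{C}}))$ for the honestly nested subalgebras $B_{C}\supseteq A_{C}$, and finishes by continuity of $K$-theory under the limit over $C$. Your proposal has the right ingredients (covering isometry pairs, a barycentric-type map into the nerve, propagation control adapted to a controlled set $E$), but as written the framework they are placed in --- nested images, norm density, iteration absorbing propagation errors --- does not work; the restriction to operators supported in a fixed controlled set, rather than any coarseness or two-sided closeness property of $\theta_{\mathcal{U}}$ itself, is the step your sketch is missing.
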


\begin{proof}
Construct an inverse map of $\eta_{\mathcal{U}}$ obtained by the previous proposition as follows:
$$\psi_{\mathcal{U}}:X\rightarrow N_{\mathcal{U}}$$ 
$$\qquad\qquad\qquad\quad x\mapsto [U] \text{ with }x\in U.$$
Then $\psi_{\mathcal{U}}\circ\eta_{\mathcal{U}}$ and $\eta_{\mathcal{U}}\circ\psi_{\mathcal{U}}$ are close to the identities on $N_{\mathcal{U}}$ and $X$ respectively. However, $\psi_{\mathcal{U}}$ may not be a coarse map. 

First, we show that there are two covering isometries for the identities on $X$ and $N_{\mathcal{U}}$. Let $E^{p}_{X}$ be a represented $L^{p}$-$X$-module of $B^{p}(X)$, and let $E^{p}_{N_{\mathcal{U}}}$ be a represented $L^{p}$-$N_{\mathcal{U}}$-module of $B^{p}(N_{\mathcal{U}})$. We write $$B^{p}(X)=A=\varinjlim\limits_{C}A_{C},$$ where $C$ is any open controlled subset of $X\times X$ containing the diagonal $\bigtriangleup$, and $A_{C}=\{T\in A\mid \supp(T)\subseteq C\}.$
Since the controlled sets $C$ generate the coarse structure, it follows that for any controlled operator $T\in A$, its support $\supp(T)$ must be contained in some $C$, which implies that $T\in A_{C}$ for some $C$ appearing in the direct limit.

Let $\widetilde{\mathcal{U}_{C}}=\{U\mid U\times U\subseteq C\}$, meaning that $\widetilde{\mathcal{U}_{C}}$ is a uniformly bounded open cover. By Lemma \ref{lemma 2.9}, there exists a locally finite uniformly bounded cover $\mathcal{U}_{C}$ with $\widetilde{\mathcal{U}_{C}}\preccurlyeq\mathcal{U}_{C}$. Put $\psi=\psi_{\mathcal{U}_{C}}: X\rightarrow N_{\mathcal{U}_{C}}$ and $\eta=\eta_{\mathcal{U}_{C}}: N_{\mathcal{U}_{C}}\rightarrow X$.
Let $V_{1}$ be a covering isometry for $\eta$. Analogously, we let $V_{2}: E^{p}_{X}\rightarrow E^{p}_{N_{\mathcal{U}}}$ be an isometry for $\psi$ such that the set $\{(y,\psi(x))\mid (y,x)\in\supp(V_{2})\}$ is controlled. Then there exists a contractible operator $V_{2}^{+}: E^{p}_{N_{\mathcal{U}}}\rightarrow E^{p}_{X}$ such that $V^{+}_{2}V_{2}=I$ and $\supp(V^{+}_{2})=\supp(V_{2})^{-1}$. Therefore, $V_{1}V_{2}$ and $V_{2}V_{1}$ are covering isometries for $\eta\circ\psi$ and $\psi\circ\eta$ respectively. Since $\eta\circ\psi$ and $\psi\circ\eta$ are close to the identities, $V_{1}V_{2}$ and $V_{2}V_{1}$ are covering isometries for the identities on $X$ and $N_{\mathcal{U}}$  respectively.  

Next, we show that $Ad_{V_{2}}$ maps $A_{C}$ into $B^{p}(N_{\mathcal{U}_{C}})$. It is easy to see that for any $T\in A$ with $\supp(T)\subseteq C$, $T'=Ad_{V_2}(T)=V_{2}TV^{+}_{2}$ is locally compact. It thus suffices to show that $T'$ has finite propagation. Observe that
$$
\supp(T')\subseteq\{(y,y')\mid \exists (x,x')\in C, s.t. (y,x), (y',x')\in\supp(V_{2})\}.
$$
Therefore, there exists $R>0$ such that if $(y,y')\in\supp(T')$, then there exists $(x,x')\in C$ with 
$d(y,\psi(x)), d(y',\psi(x'))<R$. Since $\widetilde{\mathcal{U}_{C}}\preccurlyeq\mathcal{U}_{C}$ and $(x,x')\in C$, there exists $U\in\mathcal{U}_{C}$ with $x,x'\in U$ such that
$$
d(\psi(x),[U]), d(\psi(x'),[U])\leq\frac{\pi}{2}.
$$
So for all $(y,y')\in\supp(T')$, $$d(y,y')\leq d(y,\psi(x))+d(\psi(x),[U])+d(\psi(x'),[U])+d(y',\psi(x'))<2R+\pi.$$  Thus $Ad_{V_{2}}: A_{C}\rightarrow B^{p}(N_{\mathcal{U}_{C}})$. Besides, we have $Ad_{V_{1}}: B^{p}(N_{\mathcal{U}_{C}})\rightarrow B^{p}(X)=A$ by Proposition \ref{prop 2.29}.

Furthermore, we show that there is an isomorphism between the $K$-theory groups of the subalgebra of $A$ and $B^{p}(N_{\mathcal{U}_{C}})$. Let $B_{C}$ be the subalgebra of $A$ which is mapped into $B^{p}(N_{\mathcal{U}_{C}})$ by $Ad_{V_{2}}$. Then $A_{C}\subseteq B_{C}$, and thus 
$$
A=\varinjlim\limits_{C}A_{C}\subseteq\varinjlim\limits_{C}B_{C}\subseteq A.
$$
Therefore, $\varinjlim\limits_{C}B_{C}=A$. Since $V_{2}V_{1}$ is a covering isometry for the identity on $N_{\mathcal{U}_{C}}$, $Ad_{V_{1}}(B^{p}(N_{\mathcal{U}_{C}}))\subseteq B_{C}$. Hence, $$K_{*}(B^{p}(N_{\mathcal{U}_{C}}))\rightarrow K_{*}(B_{C})\rightarrow K_{*}(B^{p}(N_{\mathcal{U}_{C}}))$$
is the identity. Since $V_{1}V_{2}$ is a covering isometry for the identity on $X$, it follows that $K_{*}(B^{p}(N_{\mathcal{U}_{C}}))\cong K_{*}(B_{C})$.

Finally, for a controlled open set $C'$ with $C\subseteq C'$, we get a commutative diagram:
$$
\begin{CD}
K_{*}(B^{p}(N_{\mathcal{U}_{C}}))@>{\cong}>> K_{*}(B_{C})\\
@VVV@VVV\\
K_{*}(B^{p}(N_{\mathcal{U}_{C'}}))@>{\cong}>> K_{*}(B_{C'}).
\end{CD}
$$
Taking the limit over $C$, we obtain that $$\varinjlim\limits_{C}K_{*}(B^{p}(N_{\mathcal{U}_{C}}))\cong\varinjlim\limits_{C}K_{*}(B_{C})\cong  K_{*}(B^{p}(X)).$$
\end{proof}

By the isomorphism of $\varinjlim\limits_{\mathcal{U}}K_{*}(B^{p}(N_{\mathcal{U}}))$ and  $K_{*}(B^{p}(X))$, we obtain the following $L^{p}$ coarse Baum-Connes assembly map.
\begin{definition}[$L^{p}$ coarse Baum-Connes assembly map]
Let $X$ be a proper separable coarse space. The $L^{p}$ coarse Baum-Connes assembly map for $X$ is the evaluation-at-zero homomorphism
$$
\mu: KX^{p}_{*}(X)\cong\varinjlim\limits_{\mathcal{U}}K_{*}(B^{p}_{L}(N_{\mathcal{U}}))\rightarrow\varinjlim\limits_{\mathcal{U}}K_{*}(B^{p}(N_{\mathcal{U}}))\cong K_{*}(B^{p}(X)).
$$
\end{definition}

\begin{remark}
Different coarse structures of the metric space $X$ correspond to different directed systems. In general,  when $X$ is equipped with the bounded coarse structure, the direct limit is taken over an anti-\v{C}ech sequence of $X$. However, if $X$ is equipped with the $C_{0}$ coarse structure, the direct limit could take over an uncountable system $\mathcal{C}_{0}(X)$ of $C_{0}$ open covers. In this case, we call the above map the $C_{0}$ version of the $L^{p}$ coarse Baum-Connes assembly map. 
\end{remark}

\section{The $C_{0}$ version of the $L^{p}$ coarse Baum-Connes conjecture}
In this section, we will establish the $L^{p}$ version of the Eilenberg swindles and the six-term Mayer-Vietoris sequence. In addition, we show that the $L^{p}$ coarse Baum-Connes conjecture for $C_{0}$ coarse geometry holds for any infinite uniformly discrete proper metric space. Hence, we establish the $C_{0}$ version of the $L^{p}$ coarse Baum-Connes conjecture for any finite-dimensional simplicial complex. 

\subsection{Mayer-Vietoris sequence and Eilenberg swindles}
In this subsection, we will set up the two fundamental techniques for calculating the $K$-theory groups of  $L^{p}$ Roe algebras, namely, the $L^{p}$ version of the Eilenberg swindles and the six-term Mayer-Vietoris sequence.

To discuss the Mayer-Vietoris sequence, we need the notion of coarsely excisive cover for a proper metric space.
\begin{definition}\cite{HRY93}
Let $X=E\cup F$ be a cover of a proper coarse space $X$ by closed subsets. We say that the cover is coarsely excisive if for any controlled set $A$, there exists a controlled set $B$ such that
$$
E_{A}\cap F_{A}\subseteq (E\cap F)_{B},
$$ 
where $E_{A}=\{x\in X\mid \exists y\in E \text{ with }(x,y)\in A\text{ or }x=y\}$.
\end{definition}

\begin{example}\cite{HRY93}
Let $X=\mathbb{R}$, $E=\{x\in\mathbb{R}\mid x\geq 0\}$ and $F=\{x\in\mathbb{R}\mid x\leq 0\}$. Then the decomposition $X=E\cup F$ is coarsely excisive.
\end{example}

\begin{remark}\cite{Wri05}
On a path metric space $X$, any cover of $X$ by closed subsets is coarsely excisive for the $C_{0}$ and bounded coarse structures.
\end{remark}

The next key theorem shows that for the coarsely excisive cover of a proper coarse space, there is a six-term Mayer-Vietoris sequence in the $K$-theory of the $L^{p}$ Roe algebra.
\begin{theorem}\label{th 3.2}
For a coarsely excisive decomposition $X=Y\cup Z$, there exists a six-term Mayer-Vietoris sequence:
$$
\begin{CD}
K_{1}(B^{p}(Y\cap Z))@>{i_{1*}\oplus i_{2*}}>>K_{1}(B^{p}(Y))\oplus K_{1}(B^{p}(Z))@>{j_{1*}-j_{2*}}>> K_{1}(B^{p}(X))
\\@A{\partial}AA@.@VV{\partial}V\\
K_{0}(B^{p}(X))@<{j_{1*}-j_{2*}}<<K_{0}(B^{p}(Y))\oplus K_{0}(B^{p}(Z))@<{i_{1*}\oplus i_{2*}}<< K_{0}(B^{p}(Y\cap Z)),
\end{CD}
$$
where $i_{1}: Y\cap Z\rightarrow Y$, $i_{2}: Y\cap Z\rightarrow Z$, $j_{1}: Y\rightarrow X$, $j_{2}: Z\rightarrow X$.
\end{theorem}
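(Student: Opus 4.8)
The plan is to mimic the standard Mayer--Vietoris argument for Roe algebras (as in Higson--Roe--Yu \cite{HRY93} and its adaptation in \cite{Wri05}) in the $L^{p}$ setting, keeping in mind that the only structural facts we need about $B^{p}$ are that it is a Banach algebra admitting a six-term exact sequence in $K$-theory for ideals, that it is functorial for coarse maps via covering isometries, and that it is unchanged (up to $K$-theory isomorphism) under passing to an $L^{p}$-$X$-module that is ``big enough''. First I would fix an $L^{p}$-$X$-module $E^{p}_{X}$ that restricts to ample $L^{p}$-modules over $Y$, $Z$, and $Y\cap Z$, and inside $B^{p}(E^{p}_{X})$ define the subalgebras $B^{p}(X; Y)$ and $B^{p}(X; Z)$ consisting of (norm limits of) locally compact controlled operators whose support lies within a controlled neighbourhood of $Y\times Y$, respectively $Z\times Z$. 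The two key algebraic identities to establish are
$$
B^{p}(X;Y)+B^{p}(X;Z)=B^{p}(X),\qquad B^{p}(X;Y)\cap B^{p}(X;Z)=B^{p}(X;Y\cap Z),
$$
where the second identity is exactly where coarse excision of the decomposition $X=Y\cup Z$ is used: if an operator is supported near $Y\times Y$ and near $Z\times Z$, then its support lies in $Y_{A}\cap Z_{A}$ for some controlled $A$, hence in $(Y\cap Z)_{B}$ by excision, so it is supported near $(Y\cap Z)\times(Y\cap Z)$. The first identity is a partition-of-unity argument: given a controlled locally compact $T$, decompose the diagonal neighbourhood carrying $\supp(T)$ into a piece near $Y$ and a piece near $Z$ and split $T$ accordingly, using boundedness of the relevant multiplication operators on $L^{p}$ (this is where one must be slightly careful, since cutting by characteristic functions of coarse neighbourhoods must be done so as to preserve the Banach-algebra structure and local compactness).

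Next I would identify each of these subalgebras, up to $K$-theory, with the Roe algebra of the relevant subspace: the inclusion $B^{p}(Y)\hookrightarrow B^{p}(X;Y)$ induced by the inclusion $Y\hookrightarrow X$ and a covering isometry is a $K$-theory isomorphism, because any operator supported within a controlled neighbourhood of $Y\times Y$ can be conjugated back onto an $L^{p}$-$Y$-module, and the two conjugations compose to covering isometries for the identity maps; ampleness guarantees these covering isometries exist, and the usual ``Eilenberg-swindle-free'' functoriality argument (stability of $K$-theory under conjugation by covering isometries of identities, which is the $L^{p}$ analogue of the standard fact for Roe $C^{*}$-algebras) gives the isomorphism. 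Similarly $B^{p}(Z)\simeq B^{p}(X;Z)$ and $B^{p}(Y\cap Z)\simeq B^{p}(X;Y\cap Z)$ in $K$-theory, compatibly with all the inclusion maps.

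Finally, having the two identities above, I would invoke the Mayer--Vietoris sequence in $K$-theory for a Banach algebra written as a sum of two subalgebras with prescribed intersection --- i.e.\ apply the six-term exact sequence of the pullback square
$$
\begin{CD}
B^{p}(X;Y\cap Z) @>>> B^{p}(X;Y)\\
@VVV @VVV\\
B^{p}(X;Z) @>>> B^{p}(X)
\end{CD}
$$
(this is a Milnor-type excision square, valid for $K$-theory of Banach algebras once one knows $B^{p}(X;Y)+B^{p}(X;Z)=B^{p}(X)$ and the intersection is as stated). Transporting along the $K$-theory isomorphisms of the previous step turns this into the asserted six-term sequence with the maps $i_{1*}\oplus i_{2*}$, $j_{1*}-j_{2*}$, and the boundary $\partial$. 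The main obstacle I anticipate is the analytic bookkeeping in the first identity $B^{p}(X;Y)+B^{p}(X;Z)=B^{p}(X)$ at the level of the norm-closed algebras (controlling propagation and local compactness after cutting, and checking that the cut pieces genuinely lie in the closures of the controlled locally compact operators), together with verifying that the Mayer--Vietoris/excision machinery for $K$-theory of Banach algebras applies to these non-$C^{*}$ Roe-type algebras; the coarse-excision input, by contrast, enters cleanly and only in the intersection identity.
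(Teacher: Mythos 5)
Your route is the same one the paper takes: both follow the coarse Mayer--Vietoris principle of \cite{HRY93}, realizing $B^{p}(Y)$, $B^{p}(Z)$, $B^{p}(Y\cap Z)$ (up to $K$-theory) as the subalgebras of $B^{p}(X)$ of operators supported near $Y\times Y$, $Z\times Z$, $(Y\cap Z)\times(Y\cap Z)$, proving the sum and intersection identities, and then invoking the six-term Mayer--Vietoris sequence for Banach algebras; coarse excisiveness enters only in the intersection identity, exactly as you say. However, the one place where your proposal stops short is precisely the point that constitutes the paper's actual proof beyond the citation of \cite{HRY93}: the equality $B^{p}(Y)+B^{p}(Z)=B^{p}(X)$ at the level of the \emph{norm-closed} algebras. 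Your partition-of-unity argument only yields that the sum is dense (every controlled locally compact operator splits), and you list the passage to closures as an anticipated obstacle without resolving it. This is a genuine issue in the $L^{p}$ setting: unlike for $C^{*}$-algebras, homomorphisms of $L^{p}$ operator algebras need not have closed range and the sum of two closed ideals of a Banach algebra need not be closed, so density alone does not give the equality, and without it the identification $B^{p}(Y)/\bigl(B^{p}(Y)\cap B^{p}(Z)\bigr)\cong B^{p}(X)/B^{p}(Z)$ on which the Mayer--Vietoris boundary map rests is unavailable.

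The missing step is short but essential, and is how the paper closes it: given a Cauchy sequence $T_{n}=T^{(1)}_{n}+T^{(2)}_{n}$ in $B^{p}(Y)+B^{p}(Z)$, one may take $T^{(1)}_{n}$ to be the truncation of $T_{n}$ to $Y\times Y$; since multiplication by characteristic functions is contractive on $L^{p}$ and truncation is linear in $T_{n}$, one has $\Vert T^{(1)}_{n}-T^{(1)}_{m}\Vert\leq\Vert T_{n}-T_{m}\Vert$, so $\{T^{(1)}_{n}\}$ and hence $\{T^{(2)}_{n}\}=\{T_{n}-T^{(1)}_{n}\}$ are Cauchy, with limits in $B^{p}(Y)$ and $B^{p}(Z)$ respectively. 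Thus $B^{p}(Y)+B^{p}(Z)$ is closed, and combined with your density argument this gives the equality; the rest of your plan (conjugation by covering isometries to identify the supported-near subalgebras with the Roe algebras of the subspaces, and the Banach-algebra six-term sequence) then proceeds as in the $C^{*}$ case. Adding this contractive-truncation argument would make your proposal match the paper's proof in substance as well as in outline.
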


\begin{proof}
This theorem can be proved in a similar way as shown in \cite{HRY93}. Unlike the case of $C^{*}$-algebras, a homomorphism between $L^{p}$ operator algebras may not have a closed range. Therefore, we need to use another approach to prove that $B^{p}(Y)+B^{p}(Z)$ is closed in $B^{p}(X)$. Let $\{T^{(1)}_{n}+T^{(2)}_{n}\}$ be a Cauchy sequence in $B^{p}(Y)+B^{p}(Z)$, and put $T_{n}\triangleq T^{(1)}_{n}+T^{(2)}_{n}$ in $B^{p}(X)$, then there exists an operator $T\in B^{p}(X)$ such that $T_{n}\xrightarrow{\Vert\cdot\Vert}T$. As $T^{(1)}_{n}$ is the truncation of $T_{n}$ to $Y\times Y$, thus $\Vert T^{(1)}_{n}\Vert\leq\Vert T_{n}\Vert$, we conclude that $\{T^{(1)}_{n}\}$ is a Cauchy sequence. Since $T^{(2)}_{n}=T_{n}-T^{(1)}_{n}$, we can deduce that $\{T^{(2)}_{n}\}$ is also a Cauchy sequence. Hence, there exist $T^{(1)}\in B^{p}(Y)$ and $T^{(2)}\in B^{p}(Z)$ such that $T^{(1)}_{n}\xrightarrow{\Vert\cdot\Vert}T^{(1)}$ and $T^{(2)}_{n}\xrightarrow{\Vert\cdot\Vert}T^{(2)}$, which implies  $T=T^{(1)}+T^{(2)}\in B^{p}(Y)+B^{p}(Z)$. It is easy to verify that $B^{p}(Y)+B^{p}(Z)$ is dense in $B^{p}(X)$. It follows that $B^{p}(Y)+B^{p}(Z)=B^{p}(X)$ as required.
\end{proof}

\begin{remark}
Analogously to Theorem \ref{th 3.2}, we have six-term Mayer-Vietoris sequences in the $L^{p}$ coarse $K$-homology and the $L^{p}$ $K$-homology at infinity.
\end{remark}

The following definition provides a sufficient condition for constructing Eilenberg swindles.
\begin{definition}\cite{Wri05}
Let $X$ be a proper separable coarse space, and let $\{\alpha_{k}\}$ be a sequence of coarse maps on $X$. The sequence $\{\alpha_{k}\}$
\begin{itemize}
\item is properly supported if for every bounded set $B$, the intersection $B\cap Range(\alpha_{k})\neq\emptyset$ for only finitely many $k$;
\item is uniformly controlled if for every controlled set $A$, there exists a controlled set $B_{A}$ such that $(x,x')\in A$ implies $(\alpha_{k}(x),\alpha_{k}(x'))\in B_{A}$ for all $k$;
\item has uniformly close steps if there exists a controlled set $C$ such that $(\alpha_{k}(x),\alpha_{k+1}(x))$ belongs to $C$ for all $k$ and all $x\in X$.
\end{itemize}
\end{definition}

We now turn to construct the Eilenberg swindles, i.e. `infinite sums' of a given element of a $K$-group.

\begin{lemma}[Eilenberg swindles]\label{lemma 3.11}
Let $X$ be a proper separable coarse space, and let $\{\alpha_{k}\}$ be a sequence of coarse maps on $X$. If $\alpha_{0}$ is the identity and the sequence $\{\alpha_{k}\}$ is properly supported, uniformly controlled, and has uniformly close steps, then $K_{*}(B^{p}(X))=0$.
\end{lemma}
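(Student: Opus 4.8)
The plan is to build an Eilenberg swindle operator on the $L^p$-$X$-module $E^p_X$ directly from the hypotheses, exactly as in the $C^*$-algebra case but taking care of the two features peculiar to the $L^p$ setting: operators need not be adjointable, and one must check norm-boundedness of the infinite block-diagonal sum by hand rather than invoking a $C^*$-identity. First I would fix an $L^p$-$X$-module $E^p_X$ and replace it by a countable $\ell^p$-direct sum $\widehat{E}^p_X=\bigoplus_{k\ge 0}E^p_X$ (which is again an $L^p$-$X$-module via the diagonal $C_0(X)$-action, and is admissible by the remark on independence of the module). For each coarse map $\alpha_k$ choose a covering isometry $V_{\alpha_k}\colon E^p_X\to E^p_X$ together with an involution $V_{\alpha_k}^+$, using that $\alpha_0=\mathrm{id}$ so $V_{\alpha_0}$ may be taken to be $I$. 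Assemble $W=\sum_{k\ge 0}\iota_{k}V_{\alpha_k}\pi_0$ and its partial inverse $W^+=\sum_{k\ge 0}\iota_0 V_{\alpha_k}^+\pi_k$, where $\iota_k,\pi_k$ are the inclusion/projection of the $k$-th summand. The "uniformly controlled" hypothesis is precisely what forces $W$ (and $W^+$) to be controlled on $\widehat{E}^p_X$; the "properly supported" hypothesis makes $W$ locally compact, so that conjugation by $W$ preserves $B^p(\widehat{E}^p_X)\cong B^p(X)$; and "uniformly close steps" is what makes the shift-type operator $S=\sum_{k\ge 0}\iota_{k+1}V_{\alpha_{k+1}}V_{\alpha_k}^+\pi_k$ controlled as well, which is what lets the swindle close up.

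The swindle itself runs as follows. Define on $B^p(\widehat{E}^p_X)$ the endomorphism $\Theta(T)=\bigoplus_{k\ge 0}\mathrm{Ad}_{V_{\alpha_k}}(T)$ (block diagonal), which is a bona fide $*$-preserving-free homomorphism into $B^p(\widehat{E}^p_X)$ and agrees with $\mathrm{Ad}_W$. Then $\Theta$ and $\Theta'(T):=\bigoplus_{k\ge 1}\mathrm{Ad}_{V_{\alpha_k}}(T)$ (with a zero in the $0$-th slot) are conjugate by the controlled isometry-type operator $S$ above, hence induce the same map on $K_*(B^p(\widehat{E}^p_X))$. On the other hand $\Theta(T)=\mathrm{Ad}_{V_{\alpha_0}}(T)\oplus\Theta'(T)=T\oplus\Theta'(T)$ on the $K$-theory level is $\mathrm{id}_*+\Theta'_*$. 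Comparing, $\Theta_*=\Theta'_*$ and $\Theta_*=\mathrm{id}_*+\Theta'_*$ give $\mathrm{id}_*=0$ on $K_*(B^p(X))\cong K_*(B^p(\widehat{E}^p_X))$, whence $K_*(B^p(X))=0$.

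The main obstacle — and the only place where the $L^p$ argument genuinely differs from the classical one — is establishing that $W$, $W^+$, and $S$ are \emph{bounded} operators on $\widehat{E}^p_X$ and, more importantly, that $\Theta(T)$ and the conjugated operators lie in $B^p(X)$, i.e. are simultaneously locally compact and finite-propagation. Boundedness of the block-diagonal $\Theta(T)=\bigoplus_k \mathrm{Ad}_{V_{\alpha_k}}(T)$ on $\ell^p$-sums follows from the uniform bound $\|\mathrm{Ad}_{V_{\alpha_k}}(T)\|=\|V_{\alpha_k}TV_{\alpha_k}^+\|\le\|T\|$ (each $V_{\alpha_k}$ an isometry, each $V_{\alpha_k}^+$ contractive) together with the elementary fact that an $\ell^p$-direct sum of operators with uniformly bounded norms is bounded with the same bound; boundedness of $W$ and $S$ needs an extra argument because these are not block-diagonal, but "properly supported" means that for each bounded $B$ only finitely many ranges meet $B$, so after compressing to any bounded set only finitely many blocks contribute and one gets a controlled, locally finite incidence pattern, from which a uniform $\ell^p$-bound again follows. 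Local compactness of $\Theta(T)$ and $\mathrm{Ad}_S$-images is inherited from that of $T$ blockwise and the proper-support condition; finite propagation is exactly the "uniformly controlled"/"uniformly close steps" input. I would carry out these verifications as a short sequence of lemmas (boundedness, controlledness, local compactness of $W,W^+,S$), then state the swindle identity in two lines. One should also remark that, as in the earlier results of the paper, the choice of $L^p$-$X$-module and of covering isometries is irrelevant up to the identifications already set up, so the conclusion is about $B^p(X)$ and not merely about one representation.
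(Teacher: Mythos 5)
Your overall strategy coincides with the paper's: choose covering isometry pairs $(V_k,V_k^+)$ for the $\alpha_k$ with $V_0=I$, form the block-diagonal operators $\Theta(T)=\bigoplus_k \mathrm{Ad}_{V_k}(T)$ and $\Theta'(T)=0\oplus\bigoplus_{k\ge 1}\mathrm{Ad}_{V_k}(T)$ on $\bigoplus_k E^p_X$, check that these lie in $B^p(X)$ (uniform control gives controlledness, proper support together with local compactness and a density argument gives membership), and deduce $[T]=0$ from $[\Theta(T)]=[T]+[\Theta'(T)]$ and $[\Theta(T)]=[\Theta'(T)]$. One slip is harmless but should be fixed: the operator $W=\sum_k\iota_k V_{\alpha_k}\pi_0$ is unbounded on the $\ell^p$-sum (it copies the $0$-th summand into every slot), and $\mathrm{Ad}_W$ is not the block-diagonal $\Theta$; you presumably meant $\sum_k\iota_k V_{\alpha_k}\pi_k$, and in any case $W$ plays no role once $\Theta$ is defined directly.

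The genuine gap is the step ``$\Theta$ and $\Theta'$ are conjugate by the controlled isometry-type operator $S=\sum_k\iota_{k+1}V_{\alpha_{k+1}}V^{+}_{\alpha_k}\pi_k$, hence induce the same map on $K_*$.'' As it stands this is not justified: $S$ is neither invertible nor an isometry; it is merely contractive, with $S^{+}S=\bigoplus_k V_kV_k^{+}$ only an idempotent, so conjugation by $S$ is not conjugation by an invertible element, and the usual fact that conjugation by an isometry multiplier fixes $K$-theory is itself proved by a dilation argument that must be carried out here---all the more in the $L^p$ setting, where there are no adjoints or polar decompositions to fall back on. The paper closes exactly this point by exhibiting explicit invertibles
$U_k=\begin{pmatrix}V_{k+1}V_k^{+}&1-V_{k+1}V_{k+1}^{+}\\ 1-V_kV_k^{+}&V_kV_{k+1}^{+}\end{pmatrix}$
with $U_k^{-1}=U_k^{+}$, so that $U=\bigoplus_k U_k$ satisfies $U(\beta_2(T)\oplus 0)U^{-1}=\beta_3(T)\oplus 0$; the uniformly-close-steps hypothesis (together with choosing the $V_k$ supported near the graphs of the $\alpha_k$) is what makes $U$ a controlled multiplier, so the conjugation is legitimate at the level of $K_*(B^p(X))$. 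Your argument needs either this $2\times 2$ rotation or an $L^p$ version of the isometry-conjugation lemma proved by the same trick; with that inserted, the remaining verifications you outline (uniform norm bounds on the blocks, controlledness from uniform control, only finitely many blocks meeting a bounded set from proper support, density) match the paper's proof.
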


\begin{proof}
Let $E^{p}_{X}$ be an $L^{p}$ representation space of $B^{p}(X)$. We claim that there exists a sequence $\{V_{k}\}^{\infty}_{k=0}$ of covering isometries for $\{\alpha_{k}\}^{\infty}_{k=0}$ such that for every element $[T]\in K_{*}(B^{p}(X))$ with $T$ an idempotent or invertible in $M_{n}(B^{p}(X))$ for some $n\in\mathbb{N}$, then $[\beta_{2}(T)]=[\beta_{3}(T)]$ on $K$-theory, where 
$$
\beta_{2}(T)=\begin{pmatrix}T&&\\&Ad_{V_{1}}(T)&\\&&\ddots\end{pmatrix}, \quad \beta_{3}(T)=\begin{pmatrix}Ad_{V_{1}}(T)&&\\&Ad_{V_{2}}(T)&\\&&\ddots\end{pmatrix},
$$
are well-defined operators on $\bigoplus\limits_{k}E^{p}_{X}$, here $Ad_{V_{k}}(T)=V_{k}TV^{+}_{k}$. Set
$$
\beta_{1}(T)=\begin{pmatrix}T&&\\&0&\\&&\ddots\end{pmatrix}.
$$
Then $[\beta_{1}(T)]\bigoplus [\beta_{3}(T)]=[\beta_{2}(T)]$, which implies $[\beta_{1}(T)]=0$ on $K$-theory. Since the top left corner inclusion $i: E^{p}_{X}\rightarrow \bigoplus\limits_{k}E^{p}_{X}$ is a covering isometry for the indentity on $X$, it follows that $[T]=0$ on $K_{*}(B^{p}(X))$. As $[T]$ is arbitrary, we have $K_{*}(B^{p}(X))=0$.

In the following, we prove this claim in three steps. First, we will show that $\bigoplus\limits_{k=0}^{\infty}Ad_{V_{k}}(T)$ is a controlled operator if $T$ is a controlled operator. Let $\mathcal{O}\subseteq X\times X$ be a controlled open neighborhood of the diagonal $\bigtriangleup$, and let $\{V_{k}\}^{\infty}_{k=0}$ be a covering isometry for $\{\alpha_{k}\}^{\infty}_{k=0}$ and 
$$
\supp(V_{k})\subseteq\{(x,x')\in X\times X\mid (x,\alpha_{k}(x'))\in\mathcal{O}\}.
$$
Then, we choose $V_{0}$ as the identity on $E^{p}_{X}$ since $\alpha_{0}$ is the identity on $X$. Observe that if $T$ is a controlled operator in $B^{p}(X)$, then $A=\supp(T)$ is a controlled set. Because $\{\alpha_{k}\}$ is uniformly controlled, there exists a controlled set $B_{A}$ such that $(x,x')\in A$ implies $(\alpha_{k}(x),\alpha_{k}(x'))\in B_{A}$. Then 
$$\supp(Ad_{V_{k}}(T))=\supp(V_{k}TV^{+}_{k})\subseteq\supp(V_{k})\circ\supp(T)\circ\supp(V^{+}_{k})$$
$$\qquad\qquad\quad\qquad\subseteq\mathcal{O}\circ B_{A}\circ\mathcal{O}^{-1}.$$ 
Therefore, $Ad_{V_{k}}(T)$ is a controlled operator, and so is $\bigoplus\limits_{k=0}^{\infty}Ad_{V_{k}}(T)$. 

The next step is to demonstrate that $\bigoplus\limits_{k=0}^{\infty}Ad_{V_{k}}(T)$ belongs to $B^{p}(X)$, for all $T\in B^{p}(X)$. For a bounded set $B$, let $B'=B_{\bigtriangleup^{-1}}$ be the $\bigtriangleup^{-1}$-neighborhood of $B$, so $B'$ is bounded. Note that if $B'\cap Range(\alpha_{k})=\varnothing$, then $\supp(Ad_{V_{k}}(T))\cap (B\times B)=\varnothing.$ Since $\{\alpha_{k}\}$ is properly supported, in other words, $B'\cap Range(\alpha_{k})\neq\varnothing$ for  finitely many $k$, which implies $\supp(Ad_{V_{k}}(T))\cap (B\times B)\neq\varnothing$ for finitely many $k$. Clearly, if $T$ is locally compact, then $Ad_{V_{k}}(T)$ is locally compact, thus $\bigoplus\limits_{k=0}^{\infty}Ad_{V_{k}}(T)$ is locally compact. Consequently, if $T$ is a locally compact controlled operator in $B^{p}(X)$ for the representation on $E^{p}_{X}$, then $\bigoplus\limits_{k=0}^{\infty}Ad_{V_{k}}(T)$ is a locally compact controlled operator in $B^{p}(X)$ for the representation on $\bigoplus\limits_{k} E^{p}_{X}$. Furthermore, by Definition \ref{Roe}, we see that for every $T\in B^{p}(X)$, there exists a sequence $\{T_{\lambda}\}$ of locally compact and controlled operators in $B^{p}(X)$ such that $T_{\lambda}\xrightarrow{\Vert\cdot\Vert}T$,  which implies that $\bigoplus\limits_{k=0}^{\infty}Ad_{V_{k}}(T_{\lambda})\xrightarrow{\Vert\cdot\Vert}\bigoplus\limits_{k=0}^{\infty}Ad_{V_{k}}(T)$. So we have $\bigoplus\limits_{k=0}^{\infty}Ad_{V_{k}}(T)\in B^{p}(X)$, for all $T\in B^{p}(X)$ as required. 

The final step is to prove $[\beta_{2}(T)]=[\beta_{3}(T)]$ on $K$-theory. Set 
$$
U_{k}=\begin{pmatrix}V_{k+1}V^{+}_{k}&1-V_{k+1}V^{+}_{k+1}\\1-V_{k}V^{+}_{k}&V_{k}V^{+}_{k+1}\end{pmatrix},\quad U^{+}_{k}=\begin{pmatrix}V_{k}V^{+}_{k+1}&1-V_{k}V^{+}_{k}\\1-V_{k+1}V^{+}_{k+1}&V_{k+1}V^{+}_{k}\end{pmatrix}.
$$
It is easy to verify that $U_{k}U^{+}_{k}=U^{+}_{k}U_{k}=I$, namely $U^{-1}_{k}=U^{+}_{k}$, and 
$$
U_{k}\begin{pmatrix}V_{k}&0\\0&0\end{pmatrix}=\begin{pmatrix}V_{k+1}&0\\0&0\end{pmatrix},\quad \begin{pmatrix}V^{+}_{k}&0\\0&0\end{pmatrix}U^{+}_{k}=\begin{pmatrix}V^{+}_{k+1}&0\\0&0\end{pmatrix}.
$$
Let 
$$
U=\begin{pmatrix}U_{0}&&\\&U_{1}&\\&&\ddots\end{pmatrix}.\text{ Then }U^{-1}=\begin{pmatrix}U^{+}_{0}&&\\&U^{+}_{1}&\\&&\ddots\end{pmatrix},
$$
and $$U\begin{pmatrix}\beta_{2}(T)&0\\0&0\end{pmatrix}U^{-1}=\begin{pmatrix}\beta_{3}(T)&0\\0&0\end{pmatrix}.$$
Hence, $[\beta_{2}(T)]=[\beta_{3}(T)]$ on $K$-theory, completing the claim.
\end{proof}
\subsection{The case of finite-dimensional simplicial complexes}
The next few results in this subsection will mainly be used to get the $C_{0}$ version of the $L^{p}$ coarse Baum-Connes conjecture for finite-dimensional simplicial complexes. We achieve this goal in two steps. First, we show that the $C_{0}$ version of the conjecture holds for the $0$-dimensional simplicial complex, namely a discrete metric space. Next, using the method of graph theory, we only need a finite number of cutting-and-pasting steps to reduce a finite-dimensional simplicial complex to the 0-dimensional case. Hence, we can show that the $L^{p}$ coarse Baum-Connes assembly map is an isomorphism on the $C_{0}$ structure. 

Below, we will consider uniformly discrete metric spaces, since every metric space has a uniformly discrete subspace that is coarsely equivalent to it. Recall that a metric space $X$ is uniformly discrete if there exists $\varepsilon>0$ such that $d(x_{1},x_{2})\geq\varepsilon$ for all $x_{1}\neq x_{2}$ of $X$. We now proceed to prove the $C_{0}$ version of the $L^{p}$ coarse Baum-Connes conjecture for any infinite uniformly discrete metric space.

\begin{theorem}\label{th 3.17}
Let $X$ be an infinite uniformly discrete proper metric space. Then the $L^{p}$ coarse Baum-Connes conjecture holds for $X$ equipped with the $C_{0}$ coarse structure. Moreover, any two such spaces are coarsely equivalent and
$$
K^{p,\infty}_{*}(X)\cong KX^{p}_{*}(X_{0})\cong K_{*}(B^{p}(X_{0})).
$$
\end{theorem}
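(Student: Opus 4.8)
The plan is to compute all three groups from one and the same inductive system built out of finite pieces, and to recognise $\mu$ as the colimit of the evaluation maps for that system.

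First I would settle the coarse‑equivalence claim and use it to pass to a model. Since $X$ is proper and uniformly discrete, bounded sets are finite, so $X$ is a countable union of finite sets, hence countably infinite. If $\varepsilon>0$ witnesses uniform discreteness, then applying the defining condition of a $C_{0}$ entourage with this $\varepsilon$ shows that every $C_{0}$‑controlled set $E\subseteq X\times X$ is contained in $\bigtriangleup\cup(F\times F)$ for some finite $F$, and conversely each such set is controlled; thus the $C_{0}$ coarse structure on $X$ is the smallest coarse structure adjoining the finite squares. It follows that any bijection $f\colon X\to Y$ between two infinite uniformly discrete proper metric spaces, together with $f^{-1}$, is proper and bornologous for the $C_{0}$ structures, hence a coarse equivalence; this proves the second assertion and lets me fix the model $X=\mathbb{N}$ with $d(m,n)=|m-n|$.

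Next I would build cofinal systems whose nerves are visibly $0$‑dimensional. For the $C_{0}$ coarse $K$‑homology, use the covers $\mathcal{U}_{n}=\{\{0,1,\dots,n\}\}\cup\{\{k\}\colon k>n\}$: each is a locally finite $C_{0}$ open cover (a positive $C_{0}$ bounding function exists because all but finitely many members are singletons), the $\mathcal{U}_{n}$ form an increasing chain cofinal in $\mathcal{C}_{0}(\mathbb{N})$ (any $C_{0}$ cover is eventually by singletons), and each nerve $N_{\mathcal{U}_{n}}$ has no edges, so it is a countable $0$‑dimensional complex which, with its uniform spherical metric, is a ``components far apart'' discrete space in the sense of Definition~\ref{def A.4}; the connecting map $\phi_{n}$ merges the two vertices $[\{0,\dots,n\}]$ and $[\{n+1\}]$ into $[\{0,\dots,n+1\}]$ and fixes the remaining singletons. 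For the $K$‑homology at infinity, use the exhaustion by compact (finite) sets $C_{n}=\{0,\dots,n\}$: each $X/C_{n}$ is again isometric to $\mathbb{N}$, and the quotient maps $X/C_{n}\to X/C_{n+1}$ again identify a pair of points. Hence $KX^{p}_{*}(X_{0})=\varinjlim_{n}K_{*}(B^{p}_{L}(N_{\mathcal{U}_{n}}))$ and $K^{p,\infty}_{*}(X)=\varinjlim_{n}K_{*}(B^{p}_{L}(X/C_{n}))$ are sequential colimits of the same shape — $K$‑groups of $L^{p}$ localization algebras of countable $0$‑complexes along ``identify a pair of vertices'' maps — and since $B^{p}_{L}$ of a discrete space only sees the (discrete) local structure, the two systems are naturally isomorphic, giving $K^{p,\infty}_{*}(X)\cong KX^{p}_{*}(X_{0})$. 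On the Roe side, the identification theorem of Section~2 (applied to the separable proper coarse space $X_{0}$, whose locally finite uniformly bounded open covers are cofinal among the $\mathcal{U}_{n}$) gives $K_{*}(B^{p}(X_{0}))=\varinjlim_{n}K_{*}(B^{p}(N_{\mathcal{U}_{n}}))$, and $\mu$ is the colimit of the evaluation maps $K_{*}(B^{p}_{L}(N_{\mathcal{U}_{n}}))\to K_{*}(B^{p}(N_{\mathcal{U}_{n}}))$.

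It then remains to see that this colimit of evaluations is an isomorphism, and here I would track, both in $K$‑homology and in the $K$‑theory of the Roe algebra, the ideal (respectively, subspace) generated by the operators (respectively, cycles) of bounded support. On a ``components far apart'' $0$‑complex $D$ this ideal is $\mathcal{K}(E^{p}_{D})$, which has the $K$‑theory of a point by Phillips' computation, while the quotient $B^{p}(D)/\mathcal{K}(E^{p}_{D})$ is an ``$\ell^{\infty}/c_{0}$''‑type algebra supported purely at infinity; a six‑term sequence — obtained from Theorem~\ref{th 3.2} applied to a coarsely excisive splitting of $D$ into two far‑apart infinite subcomplexes, together with the bounded‑support ideal sequence — pins down $K_{*}(B^{p}(N_{\mathcal{U}_{n}}))$, and the same bookkeeping applies to $K_{*}(B^{p}_{L}(N_{\mathcal{U}_{n}}))$ and to $K^{p,\infty}_{*}$. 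Since $\mu$ is natural for the inclusion of the bounded‑support ideal, and since on the bounded pieces (points, spanning simplices) the evaluation map is visibly an isomorphism, comparing the resulting short exact sequences by the five lemma and passing to the colimit over $n$ yields $\mu\colon KX^{p}_{*}(X_{0})\xrightarrow{\ \cong\ }K_{*}(B^{p}(X_{0}))$, completing the chain of isomorphisms. I expect the genuinely delicate point to be this last step: unlike in the $C^{*}$‑algebra setting, homomorphisms of $L^{p}$ operator algebras need not have closed range — the very issue already confronted in the proof of Theorem~\ref{th 3.2} — and the behaviour of $L^{p}$ $K$‑theory under the infinite $\ell^{\infty}$‑type products appearing at infinity is not automatic, so establishing cleanly that the ``$K$‑theory at infinity'' carried by $B^{p}(X_{0})$ matches, naturally in the connecting maps, the colimit defining $K^{p,\infty}_{*}(X)$ is where the real work lies.
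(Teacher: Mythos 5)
Your preliminary reductions are sound and even supply something the paper leaves implicit: since properness plus uniform discreteness makes every compact set finite, a $C_{0}$-controlled set is exactly a subset of $\bigtriangleup\cup(F\times F)$ with $F$ finite, so any bijection of two such spaces is a $C_{0}$ coarse equivalence, and your cofinal systems $\mathcal{U}_{n}$ and $C_{n}$ (with discrete nerves matching $X/C_{n}$) reproduce, in a slightly cleaner way, the paper's identification $KX^{p}_{*}(X_{0})\cong K^{p,\infty}_{*}(X)$. The problem is the decisive step, which you yourself flag and then leave as a programme: showing that the colimit of evaluation maps is an isomorphism. The paper's proof at this point is an explicit computation, not a formal bookkeeping argument: it calculates $K_{0}(B^{p}_{L}(X/C))=\prod_{x\in X/C}\mathbb{Z}$ and $K_{1}=0$, writes $B^{p}(X_{0})=\varinjlim_{C}A_{C}$ with $A_{C}=\mathcal{K}(\ell^{p}(C)\otimes\ell^{p})\oplus\ell^{\infty}(X\backslash C)\otimes\mathcal{K}(\ell^{p})$, computes $K_{0}(A_{C})=\mathbb{Z}\oplus\prod_{x\in X\backslash C}\mathbb{Z}$ and $K_{1}(A_{C})=0$, and then checks that the two surjections out of $K_{0}(B^{p}_{L}(X))=\prod_{x}\mathbb{Z}$ have the same kernel, so the first isomorphism theorem finishes. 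All of this -- in particular the $L^{p}$ $K$-theory of the ``at infinity'' algebras $\ell^{\infty}(X\backslash C)\otimes\mathcal{K}(\ell^{p})$ and the vanishing of $K_{1}$, together with the matching of kernels -- is exactly the content you defer as ``where the real work lies.'' Without it you have not proved the theorem; the rest of your argument only rearranges the two sides into comparable colimits, which is also what the paper's first step does.

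Moreover, the specific mechanism you propose for ``pinning down'' $K_{*}(B^{p}(N_{\mathcal{U}_{n}}))$ does not work as stated. A splitting of a components-far-apart $0$-complex $D$ into two far-apart (hence disjoint) infinite subcomplexes is not coarsely excisive: for such a metric every controlled set is contained in $\bigtriangleup\cup(K\times K)$ with $K$ finite, so the penumbras $E_{A}\cap F_{A}$ can be a nonempty finite set while $(E\cap F)_{B}=\varnothing$ for every $B$; thus Theorem \ref{th 3.2} does not apply. Even if one repaired this, the resulting Mayer--Vietoris sequence would only relate $K_{*}(B^{p}(D))$ to the same unknown groups for two spaces of exactly the same type, so it is circular and computes nothing; and the ``bounded-support ideal sequence'' $0\to\mathcal{K}(E^{p}_{D})\to B^{p}(D)\to Q\to 0$ is only useful once $K_{*}(Q)$, i.e.\ the $K$-theory of the $\ell^{\infty}/c_{0}$-type quotient, has been computed -- which is again the deferred point. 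So your five-lemma scheme presupposes the computations that constitute the proof rather than providing them; to complete the argument you should carry out the explicit calculations of $K_{*}(B^{p}_{L}(X/C))$ and $K_{*}(A_{C})$ (including the odd case) and verify directly that the two kernels in $\prod_{x}\mathbb{Z}$ agree, as the paper does.
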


\begin{proof}
First, we show that the $L^{p}$ $C_{0}$ coarse $K$-homology is isomorphic to the $L^{p}$ $K$-homology at infinity. For a $C_{0}$ controlled operator $T$ on $X$, its support $\supp(T)$ contains only finitely many points off the diagonal $\bigtriangleup$. Then a $C_{0}$ open cover $\mathcal{U}$ is a cover in which all but finitely many sets are singletons. Set $C=\bigcup\limits_{i=1}^{n}C_{i}$, where $C_{i}$ is a non-singleton and $n$ is some positive integer. Thus the nerve $N_{\mathcal{U}}$ is the union of a simplicial star associated with $C$ and the discrete space $X\backslash C$. It is straightforward to check that the canonical map $f: N_{\mathcal{U}}\rightarrow X/ C$ is a homotopy equivalence. Thanks to the homotopy invariance of the $L^{p}$ coarse $K$-homology, we have $K_{*}(B^{p}_{L}(N_{\mathcal{U}}))\cong K_{*}(B^{p}_{L}(X/C))$. Taking the limit on both sides, we have $$\varinjlim\limits_{\mathcal{U}\in \mathcal{C}_{0}(X)}K_{*}(B_{L}^{p}(N_{\mathcal{U}}))\cong\varinjlim\limits_{C\subseteq X,\text{ compact }}K_{*}(B^{p}_{L}(X/C)).$$
Therefore $$KX^{p}_{*}(X_{0})\cong K^{p,\infty}_{*}(X).$$

For the second part, we explicitly compute the above group in the even case. We calculate that $K_{0}(B^{p}_{L}(X/C))=\prod\limits_{x\in X/C}\mathbb{Z}$ for any compact set $C$, and for $C\subseteq C'$, the map $$s_{1}:\prod_{x\in X/C}\mathbb{Z}=K_{0}(B^{p}_{L}(X/C))\rightarrow K_{0}(B^{p}_{L}(X/C'))=\prod\limits_{x\in X/C'}\mathbb{Z}$$
$$\qquad\qquad (n_{x})_{x\in X/C}\qquad\qquad\mapsto\qquad \left(\sum\limits_{x\in C'/C}n_{x},(n_{x})_{x\in X/C'}\right)$$
is surjective, and $\mathrm{Ker}s_{1}=\{(n_{x})_{x\in X/C}\mid \sum\limits_{x\in C'/C}n_{x}=0, n_{x}=0 \text{ on } X/C' \}$. Thus the homomorphism $$s_{2}:K_{0}(B^{p}_{L}(X))\rightarrow \varinjlim\limits_{C\subseteq X,\text{ compact }}K_{0}(B^{p}_{L}(X/C))$$ is surjective, and $$\mathrm{Ker}s_{2}=\{(n_{x})_{x\in X}\mid\sum\limits_{x\in C}n_{x}=0, n_{x}\text{ is finitely supported }\}.$$

To see the $L^{p}$ assembly map in this space, we represent $C_{0}(X)$ on an $L^{p}$-$X$-module $E^{p}_{X}=\ell^{p}(Z_{X})\otimes\ell^{p}$, where $Z_{X}$ is a countable dense subset in $X$. Then a locally compact operator $T$ with $\supp(T)\subseteq \bigtriangleup$ can be written as $T=(T_{x})_{x\in X}$ for $T_{x}$ on $\ell^{p}$. The $L^{p}$ assembly map is as follows:
$$
\prod\limits_{x\in X}\mathbb{Z}=K_{0}(B^{p}_{L}(X))\rightarrow K_{0}(B^{p}(X_{0}))
$$
$$
\qquad(n_{x}\geq 0)_{x\in X}\mapsto (P_{x})_{x\in X},
$$
where $P_{x}$ is a diagonally supported idempotent with $rank(P_{x})=n_{x}$.

For each locally compact controlled operator $T$ for $X$, we can write $T=T_{C}+ T'_{C}$, where $C\subseteq X$ is a finite subset, $T_{C}$ is a compact operator with $\supp(T_{C})\subseteq C\times C$ and $T'_{C}$ is a diagonal operator over $X\backslash C$ with compact entries. Therefore, for the $C_{0}$ coarse structure, we may write the $L^{p}$ Roe algebra $B^{p}(X_{0})=\varinjlim\limits_{C\subseteq X,\text{ finite }}A_{C}$, where $A_{C}=\mathcal{K}(\ell^{p}(C)\otimes\ell^{p})\oplus\ell^{\infty}(X\backslash C)\otimes\mathcal{K}(\ell^{p}).$ Then we have the map corresponding to the above $L^{p}$ assembly map

$$\prod\limits_{x\in X}\mathbb{Z}=K_{0}(B^{p}_{L}(X))\rightarrow K_{0}(A_{C})=\mathbb{Z}\oplus\prod\limits_{x\in X\backslash C}\mathbb{Z}$$
 $$\qquad\qquad(n_{x})_{x\in X}\mapsto \left(\sum\limits_{x\in C}n_{x},(n_{x})_{x\in X\backslash C}\right).$$
Taking the limit, we deduce that the map $$s_{3}: K_{0}(B^{p}_{L}(X))\rightarrow \varinjlim\limits_{C\subseteq X,\text{ finite }}K_{0}(A_{C})=K_{0}(B^{p}(X_{0}))$$ is surjective, and 
$$\mathrm{Ker}s_{3}=\{(n_{x})_{x\in X}\mid\sum_{x\in C}n_{x}=0, n_{x} \text{ is properly supported }\}.$$
Therefore, $\mathrm{Ker}s_{3}=\mathrm{Ker}s_{2}$. Finally, the first isomorphism theorem gives that $$K_{0}(B^{p}_{L}(X))/\mathrm{Ker}s_2\cong KX_{0}^{p}(X_{0})\text{ and }K_{0}(B^{p}_{L}(X))/\mathrm{Ker}s_{3}\cong K_{0}(B^{p}(X_{0})),$$
which implies an isomorphism $KX_{0}^{p}(X_{0})\cong K_{0}(B^{p}(X_{0}))$. In the odd case, $K_{1}(B^{p}_{L}(X/C))=0$, taking the limit over $C$, we obtain $K^{p,\infty}_{1}(X)=0$, and thus $KX_{1}^{p}(X_{0})=0$. Also, $K_{1}(A_{C})=0$ implies $K_{1}(B^{p}(X_{0}))=0$. Hence, $KX_{1}^{p}(X_{0})\cong K_{1}(B^{p}(X_{0}))$. In conclusion, the $L^{p}$ assembly map is an isomorphism.
\end{proof}

To prove the main theorem of this subsection, we need several definitions and lemmas. Specifically, we will use the concept of a finite binary rooted tree from graph theory to construct a nice binary decomposition of a metric space to obtain the desired isomorphisms.

Recall from \cite{BM08} that a tree is a connected acyclic graph that contains no cycles, and any two vertices are connected by exactly one path in a tree. Say that a rooted tree $T(x)$ is a tree $T$ with a specified vertex $x$, called the root of $T$. Then the definition of the finite binary rooted tree is as follows.

\begin{definition}\cite{Wri05}
For a finite binary rooted tree (each vertex having zero or two successors), a vertex is a leaf if it has no successors, and a fork if it has two successors.
\end{definition}

From the previous definition, we have the notion of binary decomposition of a metric space. 
\begin{definition}\cite{Wri05}
Let $X$ be a metric space, and let $\mathcal{T}$ be a finite binary rooted tree. We say that $\mathcal{T}$ is a binary decomposition of $X$ if $\mathcal{T}$ has a labeling of the vertices by subsets of $X$ for which 
\begin{itemize}
\item the root is labeled by $X$;
\item if $v$ is a fork having successors labeled by $Y$ and $Z$, then $v$ is labeled by $Y\cup Z$. 
\end{itemize}
\end{definition}
The binary decomposition category, denoted by $\mathcal{L(T)}$, whose objects are the labels of $\mathcal{T}$ and the intersection $Y\cap Z$ with $Y$, $Z$ the labels of successors of some fork of $\mathcal{T}$, and whose morphisms are inclusions. 

The concept of admissible decomposition will be useful, as it is a sufficient condition for the next technical lemma.
\begin{definition}\cite{Wri05}
Let $\mathcal{T}$ be a binary decomposition of $X$, and let $h^{1}_{*}$, $h^{2}_{*}$ be covariant functors from $\mathcal{L(T)}$ to the category of $\mathbb{Z}_{2}$-graded abelian groups. Let $\alpha$ be a natural transformation from $h^{1}_{*}$ to $h^{2}_{*}$. We say that the decomposition $\mathcal{T}$ is admissible for $h^{1}$, $h^{2}$ and $\alpha$, if the following conditions are satisfied:
\begin{itemize}
\item if $v$ is a fork having successors labeled by $Y$, $Z$, then the decomposition $Y\cup Z$ is excisive for $h^{1}$ and $h^{2}$, in other words, there are Mayer-Vietoris exact sequences
$$\cdots\rightarrow h^{i}_{*}(Y\cap Z)\rightarrow h^{i}_{*}(Y)\oplus h^{i}_{*}(Z)\rightarrow h^{i}_{*}(Y\cup Z)\rightarrow\cdots$$
and naturality of $\alpha$ extends to the boundary maps;
\item if $v$ is a fork having successors labeled by $Y$, $Z$, then $\alpha: h^{1}_{*}(Y\cap Z)\rightarrow h^{2}_{*}(Y\cap Z)$ is an isomorphism;
\item if $v$ is a leaf labeled by $Y$, then $\alpha: h^{1}_{*}(Y)\rightarrow h^{2}_{*}(Y)$ is an isomorphism.
\end{itemize}
\end{definition}

\begin{example}
Let $X$ be a $3$-simplex equipped with a uniform spherical metric, and let $\alpha: KX_{*}^{p}(\cdot)\rightarrow K_{*}(B^{p}(\cdot))$. As shown in the proof of Theorem \ref{th 3.18}, we can construct an admissible binary decomposition of $X$. See Figure \ref{Fig: 1} for more details.
\end{example}

The core idea behind the following lemma is to use a finite number of cutting-and-pasting techniques by a Mayer-Vietoris long exact sequence to obtain the isomorphism. 
\begin{lemma}\cite{Wri05}\label{lemma 3.16}
If there exists a binary decomposition $\mathcal{T}$ of $X$ which is admissible for functors $h^{1}_{*}$, $h^{2}_{*}$ and natural transformation $\alpha$, then $\alpha: h^{1}_{*}(X)\rightarrow h^{2}_{*}(X)$ is an isomorphism.
\end{lemma}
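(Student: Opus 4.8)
The plan is to prove Lemma \ref{lemma 3.16} by induction on the structure of the finite binary rooted tree $\mathcal{T}$, proceeding from the leaves toward the root, so that at each stage we establish that $\alpha\colon h^{1}_{*}(Y)\to h^{2}_{*}(Y)$ is an isomorphism for the label $Y$ of the vertex currently under consideration. The base case is exactly the third bullet in the definition of admissibility: if $v$ is a leaf labeled by $Y$, then $\alpha\colon h^{1}_{*}(Y)\to h^{2}_{*}(Y)$ is an isomorphism by hypothesis. Since $\mathcal{T}$ is finite, there is a well-founded notion of height on the vertices (a leaf has height $0$, and a fork has height one more than the maximum of the heights of its two successors), and we induct on this height.

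For the inductive step, suppose $v$ is a fork of height $n\geq 1$ with successors labeled by $Y$ and $Z$; by admissibility $v$ is labeled by $Y\cup Z$. By the inductive hypothesis applied to the two successors (each of height $<n$), $\alpha\colon h^{1}_{*}(Y)\to h^{2}_{*}(Y)$ and $\alpha\colon h^{1}_{*}(Z)\to h^{2}_{*}(Z)$ are isomorphisms. The second bullet of admissibility gives that $\alpha\colon h^{1}_{*}(Y\cap Z)\to h^{2}_{*}(Y\cap Z)$ is an isomorphism. The first bullet provides Mayer-Vietoris long exact sequences for both $h^{1}_{*}$ and $h^{2}_{*}$ associated to the decomposition $Y\cup Z$, together with the compatibility of $\alpha$ with all maps in these sequences, including the boundary maps; concretely we obtain a ladder diagram
$$
\begin{CD}
\cdots @>>> h^{1}_{*}(Y\cap Z) @>>> h^{1}_{*}(Y)\oplus h^{1}_{*}(Z) @>>> h^{1}_{*}(Y\cup Z) @>>> \cdots\\
@. @VV{\alpha}V @VV{\alpha\oplus\alpha}V @VV{\alpha}V @.\\
\cdots @>>> h^{2}_{*}(Y\cap Z) @>>> h^{2}_{*}(Y)\oplus h^{2}_{*}(Z) @>>> h^{2}_{*}(Y\cup Z) @>>> \cdots
\end{CD}
$$
with exact rows and commuting squares. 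Four of every five vertical arrows in any stretch of this ladder are isomorphisms, so the five lemma (in its $\mathbb{Z}_2$-graded, six-term form, which is the version appropriate here) forces the remaining arrow $\alpha\colon h^{1}_{*}(Y\cup Z)\to h^{2}_{*}(Y\cup Z)$ to be an isomorphism as well. This completes the inductive step.

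Finally, applying the induction to the root vertex, which is labeled by $X$, yields that $\alpha\colon h^{1}_{*}(X)\to h^{2}_{*}(X)$ is an isomorphism, as claimed. I do not expect any genuine obstacle here: the only points requiring a little care are bookkeeping the finiteness and well-foundedness of the tree so that the induction is legitimate, and checking that the compatibility of $\alpha$ with the boundary maps (guaranteed by the first admissibility bullet) is precisely what is needed to run the five lemma on the ladder. The argument is formally identical to Wright's original proof in \cite{Wri05}, the $L^{p}$ setting entering only through the input Mayer-Vietoris sequences from Theorem \ref{th 3.2} and its analogues, which are already established; thus the proof is essentially an application of standard homological algebra once the admissibility hypotheses are unwound.
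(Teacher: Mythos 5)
Your proof is correct and is exactly the standard argument that the paper (which cites Wright \cite{Wri05} without reproducing a proof) relies on: induction over the finite binary tree from leaves to root, using the admissibility hypotheses to feed the Mayer--Vietoris ladder and the five lemma in its $\mathbb{Z}_2$-graded six-term form at each fork. No gaps; the compatibility of $\alpha$ with the boundary maps is indeed the only point that needs to be invoked explicitly, and you do so.
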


The next important lemma ties the inherited metric to the uniform spherical metric. To describe it, let us recall the notions of the distortion and the relatively connected subspace. The distortion depicts the difference between the inherited metric and the path metric of the subspace.
\begin{definition}\cite{Wri05}
Let $(X,d)$ be a path metric space, and let $Y$ be a closed subset of $X$. Let $d_{l}$ be the path metric on $Y$ associated with the metric inherited from $X$. The distortion of $Y$ in $X$ is
$$
\sup\left\{\frac{d_{l}(y,y')}{d(y,y')}\middle\vert\ y,y'\in Y, y\neq y'\right\}.
$$
The distortion of a path $\gamma:[0,1]\rightarrow Y$ is 
$$
\sup\left\{\frac{l(\gamma|_{[t,t']})}{d(\gamma(t),\gamma(t'))} \middle\vert\ 0\leq t<t'\leq 1\right\}.
$$
\end{definition}

For simplicity, let us focus on the case of the relatively connected subspace.
\begin{definition}\cite{Wri05}
A subspace $Y$ of a topological space $X$ is relatively connected if each connected component of $X$ contains at most one connected component of $Y$.
\end{definition}

There are two metrics on the $n$-th barycentric subdivision $X^{(n)}$ of a finite-dimensional simplicial complex $X$. The one is the inherited metric, the other is the uniform spherical metric. The following lemma indicates that these two metrics are bi-Lipschitz equivalent, and the Lipschitz constant depends only on $n$ and the dimension of $X$. Hence, the two metrics are coarsely equivalent on the $C_{0}$ and bounded structure.
\begin{lemma}\label{lemma A.8}\cite{Wri05}
Let $X$ be a finite-dimensional simplicial complex equipped with the uniform spherical metric. Let $Y$ be a subcomplex of $X^{(n)}$, the $n$-th barycentric subdivision of $X$, and let $Y_{\sigma}=Y\cap\sigma$ with $\sigma$ a simplex of $X$. Assume that
\begin{itemize}
\item for each simplex $\sigma$ of $X$, the subcomplex $Y_{\sigma}$ of $X^{(n)}$ is connected;
\item each edge of $X$ has non-empty intersection with $Y$.
\end{itemize}
Then there is a uniform bound on the distortion of each component of $Y$ in $X$, depending only on $n$ and the dimension of $X$. Moreover, suppose $Y$ is relatively connected in $X$. In that case, the inherited metric on $Y$ is coarsely equivalent and $C_{0}$-coarsely equivalent to any uniform spherical metric on $Y$, and this coarse equivalent is bi-Lipschitz on components.
\end{lemma}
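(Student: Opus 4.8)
The plan is to reduce the distortion bound to a statement about a single simplex, where the finiteness of the possible configurations forces uniform control, and then to assemble a global bound by cutting a geodesic of $X$ into pieces lying in single simplices; the coarse-equivalence assertions will follow formally. The hypotheses enter as follows: connectedness of each $Y_{\sigma}$ makes the path metric on $Y_{\sigma}$ finite and permits detours inside a simplex; the fact that every edge of $X$ meets $Y$ makes $Y$ uniformly dense in $X$; relative connectedness guarantees that two points in a common component of $Y$ are joined by a path in $Y$ at all; and the uniform spherical metric supplies the quantitative geometry — simplices of fixed size with angles strictly between $0$ and $\pi$, and components far apart.

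\textbf{Local bound.} Fix a simplex $\sigma$ of $X$ and write $Z=Y_{\sigma}$, a connected subcomplex of $\sigma^{(n)}$. I would bound the distortion of $Z$ in $\sigma$, with the path metric on $Z$ induced by the spherical metric of $\sigma$ and ambient metric $d_{\sigma}$, by a compactness argument: the ratio $d_{l}^{Z}(z,z')/d_{\sigma}(z,z')$ is continuous off the diagonal, tends to $1$ at the diagonal within a single simplex of $Z$, and has a finite limit when $z,z'$ approach a lower face of $Z$ from different top simplices, because the relevant opening angle in $\sigma$ is a fixed number in $(0,\pi)$; hence the supremum is finite. Since $\sigma^{(n)}$ has finitely many simplices, there are finitely many isomorphism types of $(\sigma,\sigma^{(n)},Z)$ with $\dim\sigma\le\dim X$, which gives a single constant $\lambda=\lambda(n,\dim X)$ bounding the distortion of $Y_{\sigma}$ in $\sigma$ for every simplex $\sigma$ of $X$. (One can also get this local bound by induction on $\dim\sigma$, using the radial retraction of $\sigma$ minus its barycenter onto $\partial\sigma$ to pass from the faces to $\sigma$.)

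\textbf{Globalization.} Let $y,y'$ lie in a common component of $Y$, hence of $X$, and let $\gamma$ be a geodesic of $X$ from $y$ to $y'$ of length $L=d_{X}(y,y')$. Let $D_{0}$ be the maximal diameter of a simplex of $X$, which is finite and depends only on $\dim X$; since $Y$ meets every edge, every point of $X$ lies within $D_{0}$ of $Y$. Mark the points $y=p_{0},p_{1},\dots,p_{m}=y'$ where $\gamma$ passes from one closed simplex of $X$ to the next, so each $p_{j}$ with $0<j<m$ lies on a proper face, and the subarc between $p_{j-1}$ and $p_{j}$ lies in a single simplex $\tau_{j}$ of $X$ and has length $L_{j}$ with $\sum_{j}L_{j}=L$. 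Pick $\tilde{p}_{j}\in Y$ within $D_{0}$ of $p_{j}$ for $0<j<m$, and set $\tilde{p}_{0}=y$, $\tilde{p}_{m}=y'$; then $\tilde{p}_{j-1},\tilde{p}_{j}\in\tau_{j}$, so the connected subcomplex $Y_{\tau_{j}}$ contains both of them and the local bound joins them by a path in $Y$ of length $\le\lambda\,d_{\tau_{j}}(\tilde{p}_{j-1},\tilde{p}_{j})\le\lambda(L_{j}+2D_{0})$. Concatenating produces a path in $Y$ from $y$ to $y'$ of length $\le\lambda L+2\lambda D_{0}m$, so it remains to bound $m$ by a constant multiple of $1+L/D_{0}$. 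This is the crux of the lemma and the main obstacle: one must show that a geodesic of the uniform spherical metric cannot cross the lower-dimensional skeleton of $X$ too often relative to its length, for which one exploits that the simplices have fixed positive size, that their angles lie strictly between $0$ and $\pi$, that the components of $X$ are far apart, and that the hypotheses force $Y$ to be thick enough; once this is settled one obtains $d_{l}(y,y')\le C\,d_{X}(y,y')$ with $C=C(n,\dim X)$, which is the asserted distortion bound.

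\textbf{Coarse equivalence.} On each component of $Y$ the distortion bound gives $d_{X}|_{Y}\le d_{l}\le C\,d_{X}|_{Y}$, so the identity $(Y,d_{l})\to(Y,d_{X}|_{Y})$ is bi-Lipschitz on components, and the two metrics share the same far-apart component structure, inherited from $X$. The simplices of $Y$ equipped with $d_{l}$ realize only finitely many bi-Lipschitz shapes, being simplices of $X^{(n)}$ of dimension at most $\dim X$, so the identity from $(Y,d_{l})$ to $Y$ with any uniform spherical metric (Definition \ref{def A.4}) is again bi-Lipschitz on components, with constant depending only on $n$ and $\dim X$. Composing, $d_{X}|_{Y}$ is bi-Lipschitz on components to any uniform spherical metric on $Y$; and a bijection that is bi-Lipschitz on each component of a coarse space whose components are far apart is automatically both a coarse equivalence and a $C_{0}$-coarse equivalence, so the lemma follows.
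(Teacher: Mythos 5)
The paper does not prove this lemma at all: it is quoted from \cite{Wri05}, so your attempt has to be judged against Wright's argument and on its own terms. Your local step (uniform distortion bound for $Y_\sigma$ inside a single simplex $\sigma$, by finiteness of the combinatorial types of $(\sigma,\sigma^{(n)},Y_\sigma)$ and positivity of the angles) is fine in outline, and the final paragraph on deducing the coarse and $C_0$-coarse equivalences from a componentwise bi-Lipschitz bound is essentially right, although the role you assign to relative connectedness is off: it is not needed to join two points of a common component of $Y$ (that is automatic), but precisely to ensure that distinct components of $Y$ lie in distinct components of $X$ and hence are far apart in the inherited metric; without that, the identity map from $(Y,d_X|_Y)$ to $Y$ with a uniform spherical metric would not even be bornologous, since two components of $Y$ inside one component of $X$ can stay at bounded inherited distance while being arbitrarily far apart in the spherical metric.

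The genuine gap is in the globalization, and you name it yourself: the entire argument hinges on the claim that a geodesic of length $L$ in $X$ crosses at most $C(1+L)$ closed simplices, and you offer no proof of this, only a list of facts "one exploits." This is not a routine verification: near high-valence vertices and higher-codimension faces the local geometry is a spherical cone/join, geodesics may run inside lower-dimensional faces (so your transition points $p_j$ need not even form a finite, well-defined sequence), and controlling the crossing rate is exactly the hard quantitative content of the lemma. There is also a flaw in the set-up itself: you choose $\tilde p_j\in Y$ within $D_0$ of $p_j$ and then assert $\tilde p_{j-1},\tilde p_j\in\tau_j$, which does not follow; for the concatenation and the local bound you need $\tilde p_j\in Y\cap\tau_j\cap\tau_{j+1}$, which is guaranteed when the common face contains an edge (every edge meets $Y$) but fails when consecutive simplices of the geodesic meet only in a vertex not belonging to $Y$ — and then $Y_{\tau_j}\cup Y_{\tau_{j+1}}$ need not even be connected, so your path in $Y$ cannot be concatenated. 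The standard way around both problems (and the route behind \cite{Wri05}) is not to count geodesic crossings at all but to control distances combinatorially, in the spirit of Lemma \ref{lemma A.5}: since there are no shortcuts through the interiors of simplices, $d_X$ between points of $Y$ is comparable to a simplicial path length, and one then builds a path inside $Y$ following such a simplicial chain, each step of uniformly bounded length thanks to the connectedness of the $Y_\sigma$ and the fact that every edge meets $Y$. As it stands, your argument reduces the lemma to an unproven statement of comparable difficulty, so the proof is incomplete.
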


As a consequence, we obtain the $L^{p}$ coarse Baum-Connes conjecture for $C_{0}$ coarse geometry in the finite-dimensional case.
\begin{theorem}\label{th 3.18}
Let $X$ be a finite-dimensional simplicial complex equipped with a uniform spherical metric, and let $X_{0}$ be the space $X$ with the $C_{0}$ coarse structure. Let $p\in [1,\infty)$, then 
$$
K^{p,\infty}_{*}(X)\cong KX^{p}_{*}(X_{0})\underset{\cong}{\xrightarrow{\mu}}K_{*}(B^{p}(X_{0})).
$$
\end{theorem}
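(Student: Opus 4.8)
The plan is to prove the three-part statement by induction on the dimension $d$ of the simplicial complex $X$, with the base case $d=0$ already handled by Theorem \ref{th 3.17}, since a $0$-dimensional simplicial complex with the uniform spherical metric is precisely an infinite uniformly discrete proper metric space (or a finite one, which is bounded and for which all the groups in question vanish, so there is nothing to prove). For the inductive step I would fix $d\geq 1$, assume the statement for all simplicial complexes of dimension $<d$, and set out to prove it for $X$ of dimension $d$. The two maps in the statement play asymmetric roles: the first isomorphism $K^{p,\infty}_{*}(X)\cong KX^{p}_{*}(X_{0})$ is an identification of $L^p$ $K$-homology theories (the topological side), while the second, $\mu\colon KX^{p}_{*}(X_{0})\to K_{*}(B^{p}(X_{0}))$, is the assembly map itself. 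I would package both as a single natural transformation $\alpha\colon h^{1}_{*}\to h^{2}_{*}$, where $h^{1}_{*}(\cdot)=K^{p,\infty}_{*}(\cdot)$ (equivalently $KX^{p}_{*}((\cdot)_{0})$ via the identification from Theorem \ref{th 3.17}'s method, which works for any relatively connected subcomplex) and $h^{2}_{*}(\cdot)=K_{*}(B^{p}((\cdot)_{0}))$, defined on the binary decomposition category $\mathcal{L}(\mathcal{T})$ of a suitably chosen binary decomposition $\mathcal{T}$ of $X$, and then invoke Lemma \ref{lemma 3.16} to conclude that $\alpha$ is an isomorphism on $X$.

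The heart of the argument is therefore the construction of an \emph{admissible} binary decomposition $\mathcal{T}$ of $X$. The idea is to cut $X$ along (a thickening of) the codimension-one skeleton so as to separate the top-dimensional simplices from one another: pass to a barycentric subdivision $X^{(n)}$ for $n$ large enough, and take $\mathcal{T}$ to be built from a two-colouring of the $d$-simplices of $X^{(n)}$, so that at each fork the two successor labels $Y,Z$ are unions of $d$-simplices from the two colour classes. One arranges that (i) each top-label intersection $Y\cap Z$, and each leaf label, deformation-retracts onto a subcomplex of dimension $<d$ — more precisely, each such piece is coarsely equivalent, via Lemma \ref{lemma A.8}, to a relatively connected lower-dimensional complex with its own uniform spherical metric, so the inductive hypothesis applies and gives that $\alpha$ is an isomorphism there; and (ii) at each fork the decomposition $Y\cup Z$ is coarsely excisive for both theories. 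Point (ii) is where the remark after Proposition \ref{prop 2.29}'s neighborhood — actually the remark stating that on a path metric space any closed cover is coarsely excisive for the $C_0$ and bounded coarse structures — does the work: the uniform spherical metric makes $X_0$ (on components) a path metric space, so arbitrary closed decompositions are excisive, and Theorem \ref{th 3.2} (together with the accompanying Mayer--Vietoris sequences for $KX^p_*$ and $K^{p,\infty}_*$) supplies the required six-term sequences with $\alpha$ natural on the boundary maps. The finiteness of the tree $\mathcal{T}$ comes from the fact that $X$ has finite dimension, so only finitely many cutting-and-pasting steps — whose number depends only on $d$ — are needed to reduce to the $0$-dimensional case.

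Two subsidiary points need care. First, one must check that the $L^p$ $C_0$ coarse $K$-homology and the $L^p$ $K$-homology at infinity agree not just for $X$ itself but for every label appearing in $\mathcal{L}(\mathcal{T})$; this follows by running the argument of Theorem \ref{th 3.17} (homotopy invariance of $L^p$ coarse $K$-homology, together with the observation that a $C_0$ open cover of such a complex is almost all singletons) on each label, using that the labels are relatively connected subcomplexes so that Lemma \ref{lemma A.8} applies component-by-component. Second, one must verify that the metrics match up: when a label $Y$ is declared a leaf or an intersection and treated via the inductive hypothesis, the relevant metric on $Y$ is its \emph{uniform spherical} metric as an abstract lower-dimensional complex, whereas as a subspace of $X$ it carries the inherited metric; Lemma \ref{lemma A.8} is exactly the bridge, asserting these are $C_0$-coarsely equivalent (bi-Lipschitz on components) provided $Y$ meets every edge and each $Y_\sigma$ is connected — conditions one must build into the choice of $\mathcal{T}$ and the subdivision level $n$.

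The main obstacle I expect is organizing the combinatorics of the binary decomposition so that \emph{both} hypotheses of admissibility hold simultaneously at every vertex of $\mathcal{T}$ — in particular, ensuring that every intersection $Y\cap Z$ arising in $\mathcal{L}(\mathcal{T})$ genuinely has lower effective dimension (after the Lemma \ref{lemma A.8} coarse equivalence) while the pieces $Y,Z$ remain relatively connected and edge-meeting — and checking that the natural transformation $\alpha$, assembled from the topological-side identification and the assembly map, is genuinely natural with respect to the Mayer--Vietoris boundary homomorphisms on the $L^p$ side. The excisiveness and the $K$-theory computations are, by contrast, essentially the $L^p$ analogues of standard facts, already available in the excerpt via Theorems \ref{th 3.2} and \ref{th 3.17} and Lemma \ref{lemma 3.11}.
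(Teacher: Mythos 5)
Your overall architecture matches the paper's: induction on dimension with Theorem \ref{th 3.17} as the base case, an admissible binary decomposition fed into Lemma \ref{lemma 3.16}, coarse excisiveness of closed covers of path metric spaces plus Theorem \ref{th 3.2} for the Mayer--Vietoris sequences, and Lemma \ref{lemma A.8} to reconcile inherited and uniform spherical metrics. But the one step you yourself flag as the heart of the argument --- the actual construction of an admissible decomposition --- is not carried out, and the sketch you give would fail. First, a two-colouring of the $d$-simplices of $X^{(n)}$ cannot ``separate the top-dimensional simplices from one another'': with two colours you can at best avoid shared codimension-one faces, but same-coloured $d$-simplices around a common vertex or lower-dimensional face still touch, so a colour class is neither a uniformly separated union of simplices nor anything that deformation-retracts (let alone $C_0$-coarsely) onto a complex of dimension $<d$. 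Second, Lemma \ref{lemma A.8} is being asked to do something it does not do: it compares the inherited metric with a uniform spherical metric on the \emph{same} subcomplex of $X^{(n)}$; it never lowers dimension, so it cannot convert a leaf or an intersection into ``a relatively connected lower-dimensional complex'' to which the inductive hypothesis applies. Third, if you cut along a \emph{thickening} of the codimension-one skeleton, the resulting intersection $Y\cap Z$ is itself $d$-dimensional as a subspace, so again the inductive hypothesis does not apply directly; you would need a $C_0$-coarse homotopy argument compatible with all three functors, which you do not supply.

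The paper's decomposition is organised differently precisely to avoid these problems. With $X^{(2)}$ the second barycentric subdivision, it takes $Y_{k}$ to be the union of the (uniformly separated) stars about the barycentres of the $k$-simplices of $X$, for every $k=0,\dots,m$, augments each to $\widetilde{Y}_{k}=Y_{k}\cup(\text{1-skeleton of }X^{(2)})$ so that Lemma \ref{lemma A.8} applies (relative connectedness and the edge-meeting hypothesis are exactly what the 1-skeleton buys), and splits the leftover graph as $G_{k}=V_{k}\cup E_{k}$. The resulting forks $Z_{k}=Z_{k-1}\cup\widetilde{Y}_{k}$, $\widetilde{Y}_{k}=Y_{k}\cup G_{k}$, $G_{k}=V_{k}\cup E_{k}$ have intersections that are either genuinely $0$-dimensional ($Y_{k}\cap G_{k}$, $V_{k}\cap E_{k}$) or genuine $(m-1)$-dimensional subcomplexes ($Z_{k-1}\cap\widetilde{Y}_{k}$, handled by the inductive hypothesis), while the leaves $Y_{k}$, $V_{k}$, $E_{k}$ are handled not by induction but by a $C_0$-coarse homotopy (contractive on each uniformly separated star or segment) onto a uniformly discrete space, followed by Theorem \ref{th 3.17}. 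To repair your proposal you would essentially have to reproduce this bookkeeping: organise the pieces by the dimension of the barycentres rather than by a two-colouring of top simplices, include the 1-skeleton to make Lemma \ref{lemma A.8} applicable, and separate the roles of ``induction on a lower-dimensional subcomplex'' (fork intersections) and ``$C_0$-coarse homotopy to a discrete space'' (leaves).
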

\begin{proof}
The idea is to construct a binary decomposition of $X$ and show that this is admissible for the following two transformations
$$
K^{p,\infty}_{*}(\cdot)\rightarrow KX^{p}_{*}(\cdot)\text{ and }KX^{p}_{*}(\cdot)\rightarrow K_{*}(B^{p}(\cdot)).
$$
Let $X^{(2)}$ denote the second barycentric subdivision of $X$, and let $Y_{k}$ be the union of simplicial stars in $X^{(2)}$ about the barycentres of the $k$-simplices of $X$. Then $X=Y_{0}\cup Y_{1}\cup Y_{2}\cup\cdots\cup Y_{m}$, where $\dim X=m$. See Figure \ref{Fig: 1}. To see these stars are uniformly separated, it suffices to consider pairs of stars $(star(x)$, $star(y))$ in the same component of $X$. Since the metric on $X$ is a path metric,  the distance between two stars is $d(star(x), star(y))=\ell(\gamma)$, the length of the shortest path $\gamma$  between the boundaries of the stars. If the stars are in a common simplex $\sigma$ of $X$, then the distance between stars is the length of the path within $\sigma$. Otherwise, there exists a simplex $\sigma$ with $x\in\sigma$ such that the path between $star(x)$ and $star(y)$ must connect the boundary of the $star(x)$ to a face of $\sigma$ not containing $x$. In either case, we get a lower bound on the distance between the stars, which is independent of the simplex $\sigma$ because all simplices of the same dimension are isometric by the first item of Definition \ref{def A.4}.
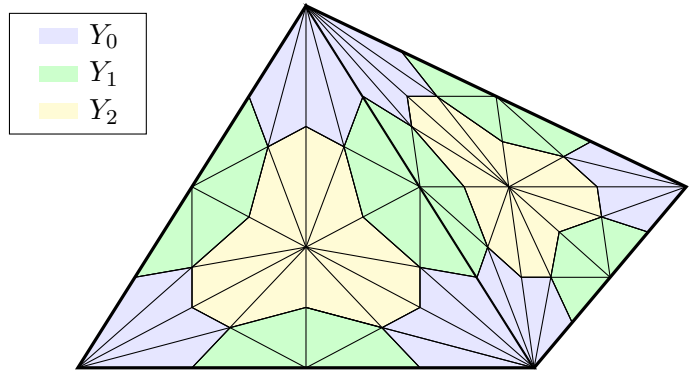
\begin{figure}[h]
\begin{tikzpicture}
\draw[fill=yellow!20] (2,0.53)--(3,0.8)--(4,0.53)--(4.5,0.8)--(4.5,1.33)--(3.75,2)--(3.5,2.93)--(3,3.2)--(2.5,2.93)--(2.25,2)--(1.5,1.33)--(1.5,0.8)--cycle;
\draw[fill=green!20](3,0.8)--(2,0.53)--(1.5,0)--(4.5,0)--(4,0.53)--cycle;
\draw[fill=green!20](2.5,2.93)--(2.25,2)--(1.5,1.33)--(0.75,1.2)--(2.25,3.6)--cycle;
\draw[fill=green!20](3.5,2.93)--(3.75,2)--(4.5,1.33)--(5.25,1.2)--(3.75,3.6)--cycle;
\draw[fill=blue!10](4.5,0)--(6,0)--(5.25,1.2)--(4.5,1.33)--(4.5,0.8)--(4,0.53)--cycle;
\draw[fill=blue!10](1.5,1.33)--(1.5,0.8)--(2,0.53)--(1.5,0)--(0,0)--(0.75,1.2)--cycle;
\draw[fill=blue!10](3.5,2.93)--(3,3.2)--(2.5,2.93)--(2.25,3.6)--(3,4.8)--(3.75,3.6)--cycle;
\draw (3,0)--(4.5,0.8)--(4.5,2.4)--(3,3.2)--(1.5,2.4)--(1.5,0.8)--cycle;
\draw (3,1.6)--(1.5,0);
\draw (3,1.6)--(4.5,0);
\draw (3,1.6)--(5.25,1.2);
\draw (3,1.6)--(3.75,3.6);
\draw (3,1.6)--(2.25,3.6);
\draw (3,1.6)--(0.75,1.2);
\draw (6,0)--(1.5,2.4);
\draw (3,0.8)--(6,0);
\draw (2.25,2)--(0,0)--(3,0.8)--(6,0)--(3.75,2)--(3,4.8)--cycle;
\draw (3,4.8)--(3,0);
\draw[very thick] (0,0)--(3,4.8)--(6,0)--cycle;
\draw[fill=yellow!20] (5.83,1.2)--(6.22,1.2)--(6.33,1.8)--(6.89,2)--(6.83,2.4)--(6.39,2.8)--(5.58,3)--(4.72,3.6)--(4.33,3.6)--(4.39,3.2)--(5.08,2.4)--(5.39,1.6)--cycle;
\draw[fill=green!20](6.33,1.8)--(6.89,2)--(7.5,1.8)--(6.5,0.6)--(6.22,1.2)--cycle;
\draw[fill=green!20](5.58,3)--(6.39,2.8)--(6.75,3)--(4.25,4.2)--(4.72,3.6)--cycle;
\draw[fill=green!20](5.08,2.4)--(4.39,3.2)--(3.75,3.6)--(5.25,1.2)--(5.39,1.6)--cycle;
\draw[fill=blue!10](6,0)--(6.5,0.6)--(6.22,1.2)--(5.83,1.2)--(5.39,1.6)--(5.25,1.2)--cycle;
\draw[fill=blue!10](8,2.4)--(6.75,3)--(6.39,2.8)--(6.83,2.4)--(6.89,2)--(7.5,1.8)--cycle;
\draw[fill=blue!10](3,4.8)--(4.25,4.2)--(4.72,3.6)--(4.33,3.6)--(4.39,3.2)--(3.75,3.6)--cycle;
\draw (0,0)--(4.5,2.4);
\draw (6,0)--(5.5,3.6);
\draw (8,2.4)--(4.5,2.4);
\draw (3,4.8)--(7,1.2);
\draw (5.67,2.4)--(6.5,0.6);
\draw (5.67,2.4)--(7.5,1.8);
\draw (5.67,2.4)--(6.75,3);
\draw (5.67,2.4)--(4.25,4.2);
\draw (5.67,2.4)--(3.75,3.6);
\draw (5.67,2.4)--(5.25,1.2);
\draw (4.5,2.4)--(5.83,1.2)--(7,1.2)--(6.83,2.4)--(6.39,2.8)--(5.5,3.6)--(4.33,3.6)--cycle;
\draw (3,4.8)--(5.08,2.4)--(6,0)--(6.33,1.8)--(8,2.4)--(5.58,3)--cycle;
\draw[very thick] (3,4.8)--(8,2.4)--(6,0);
\draw (-0.9,3.1) rectangle (0.9,4.7);
\path[fill=blue!10] (-0.5,4.3) rectangle (0,4.5);
\node at (0,4.4)[right]{\small$Y_{0}$};
\path[fill=green!20] (-0.5,3.8) rectangle (0,4);
\node at (0,3.9)[right]{\small$Y_{1}$};
\path[fill=yellow!20] (-0.5,3.3) rectangle (0,3.5);
\node at (0,3.4)[right]{\small$Y_{2}$};
\end{tikzpicture}
\caption{The second barycentric subdivision of a simplex, decomposed by $\{Y_{k}\}$.}
\label{Fig: 1}
\end{figure}

To build a desired binary decomposition, we replace $Y_{k}$ with $\tilde{Y}_{k}$, the union of $Y_{k}$ and the 1-skeleton of $X^{(2)}$, since it is difficult to prove that the binary decomposition $X=(((Y_{0}\cup Y_{1})\cup Y_{2})\cup\cdots)\cup Y_{m}$ is admissible. Also, as $\tilde{Y}_{k}$ is relatively connected, we can apply Lemma \ref{lemma A.8}.  Meanwhile, let $G_{k}$ be the graph consisting of those edges of $X^{(2)}$ which are not contained in $Y_{k}$, let $V_{k}$ be the uniformly separated stars about the vertices of $G_{k}$, and let $E_{k}$ be the uniformly separated segments in the edges of $G_{k}$. 

We construct the following binary decomposition:
\begin{align*}
 X &=Z_{m} (\text{ the root })\\
 Z_{k} &=Z_{k-1}\cup\widetilde{Y}_{k}\\
Z_{0} &=\widetilde{Y}_{0}\\
\widetilde{Y}_{k} &=Y_{k}\cup G_{k}\\
G_{k} &=V_{k}\cup E_{k}.
\end{align*}
Clearly, $Y_{k}$, $V_{k}$ and $E_{k}$ are leaves and $Z_{k}$, $\widetilde{Y}_{k}$ and $G_{k}$ are forks.
Below, we show that this decomposition is admissible. Firstly, we show that each fork is excisive. Observe that each of $Z_{k}$, $\widetilde{Y}_{k}$ and $G_{k}$ is relatively connected in $X$. By Lemma \ref{lemma A.8}, the inherited metrics are $C_{0}$-coarsely equivalent to uniform spherical metrics on leaves. Therefore, for the uniform spherical metrics, the following decompositions are excisive:
\begin{align*}
Z_{k}&=Z_{k-1}\cup \widetilde{Y}_{k}, Z_{k-1}, \widetilde{Y}_{k} \text{ are closed and } Z_{k-1}\cap\widetilde{Y}_{k}\neq\varnothing;\\
\widetilde{Y}_{k}&=Y_{k}\cup G_{k}, Y_{k}, G_{k}\text{ are closed and }Y_{k}\cap G_{k}\neq\varnothing; \\
G_{k}&=V_{k}\cup E_{k}, V_{k}, E_{k}\text{ are closed and }V_{k}\cap E_{k}\neq\varnothing.
\end{align*}
Secondly, we prove that at each fork, the intersection yields isomorphisms for two transformations. We proceed by induction on the dimension $m$ of $X$. In the case $m=0$, the space $X$ is uniformly discrete, so the result follows from Theorem \ref{th 3.17}. It is straightforward to check that the intersections $Y_{k}\cap G_{k}$ and $V_{k}\cap E_{k}$ are 0-dimensional, thus they induce isomorphisms. Further, we observe that $\dim(Z_{k-1}\cap\widetilde{Y}_{k})=m-1$. Besides, $Z_{k-1}\cap\widetilde{Y}_{k}$ is relatively connected since it contains the 1-skeleton of $X^{(2)}$. Then by Lemma \ref{lemma A.8}, its inherited metric is $C_{0}$ coarsely equivalent to the uniform spherical metric.  By inductive hypothesis, we obtain that $Z_{k-1}\cap\widetilde{Y}_{k}$ yields isomorphisms.

Finally, we demonstrate that each leaf $Y_{k}$, $V_{k}$, and $E_{k}$ yields isomorphisms for two transformations. We first consider the case of $Y_{k}$. It is easy to see that each $Y_{k}$ is either compact if it consists of finitely many stars, or is coarsely homotopy equivalent to the infinite uniformly discrete space consisting of the $k$-barycenters. Indeed, the homotopy is continuous and even contractive on each star. Since the stars are uniformly separated, it is a $C_{0}$ coarse homotopy. Hence, by Theorem \ref{th 3.17}, we obtain that the transformations are isomorphisms on each $Y_{k}$. In the same manner, the result is also true for $V_{k}$ and $E_{k}$. Consequently, according to Lemma \ref{lemma 3.16}, we obtain two isomorphisms as required.
\end{proof}

\begin{remark}
The finite dimension condition is necessary. For an infinite-dimensional example [\cite{Wri05}, Remark 3.19], it is easy to check that the $L^{p}$ coarse Baum-Connes conjecture fails. 
\end{remark}

\section{The obstructions for the $L^{p}$ coarse Baum-Connes conjecture}
In this section, we recall the notions of the coarsening space and the fusion coarse structure in \cite{Wri05}. According to the $L^{p}$ coarse Baum-Connes conjecture for $C_{0}$ coarse geometry (Theorem \ref{th 3.18}), we reformulate both sides of the $L^{p}$ coarse Baum-Connes conjecture using the $C_{0}$ and fusion coarse structures. Finally, on the fusion structure, we create an obstruction group to the $L^{p}$ coarse Baum-Connes conjecture.

\subsection{The coarsening space}
To build a `geometric' obstruction group to the $L^{p}$ coarse Baum-Connes conjecture, we need to coarsen the metric space into a simplicial complex. More precisely, we will review the definitions of total coarsening space, partial coarsening space and fusion coarse structure.

Suppose $X$ and $Y$ are topological spaces, $A$ is a closed subspace of $Y$, and $f: A\rightarrow X$ is a continuous map. Recall from \cite{Lee13} that an adjunction space is formed by the quotient space
$$
X\cup_{f}Y=(X\amalg Y)/\sim,
$$
where $\sim$ denotes the equivalence relation on the disjoint union $X\amalg Y$ generated by $a\sim f(a)$ for all $a\in A$.
 
\begin{definition}[The total coarsening space]\cite{Wri05}
Let $W$ be a metric space, and let $\mathcal{U}_{*}$ be an anti-\v{C}ech sequence for $W$. Let $\phi_{i}: N_{\mathcal{U}_{i}}\rightarrow N_{\mathcal{U}_{i+1}}$ be the connecting maps. The coarsening space of $W$ and $\mathcal{U}_{*}$ is an adjunction space
$$
X=X(W,\mathcal{U}_{*})=N_{\mathcal{U}_{1}}\times [1,2]\cup_{\phi_{1}}N_{\mathcal{U}_{2}}\times [2,3]\cup_{\phi_{2}}\cdots,
$$
where each simplex $\sigma$ is given the spherical metric, and $X$ is equipped with the largest path metric bounded above by the product metric on each $\sigma\times [i,i+1]$. Denote by $\pi: X\rightarrow [1,\infty)$ the map on $X$ arising from the projection maps $N_{\mathcal{U}_{i}}\times [i, i+1]\rightarrow [i,i+1]$.
\end{definition}

\begin{definition}[The partial coarsening space]\cite{Wri05}
The partial coarsening spaces of $W$, $\mathcal{U}_{*}$ are the spaces
$$
X_{i}=X_{i}(W,\mathcal{U}_{*})=N_{\mathcal{U}_{1}}\times [1,2]\cup_{\phi_{1}}\cdots\cup_{\phi_{i-1}}N_{\mathcal{U}_{i}}
$$
equipped with the metrics inherited as subspaces of the coarsening space. In other words, $X_{i}=\pi^{-1}([1,i])$.
\end{definition}

\begin{example}
Let $X=S^{1}$ and $\mathcal{U}_{1}=\{V_{1},V_{2},V_{3}\}$, where each $V_{i}$ is an arc covering one third of $S^{1}$, with some overlap with the adjacent $V_{i}$. Then the nerve $N_{\mathcal{U}_{1}}$ is an unfilled triangle. Let $\mathcal{U}_{2}=\{U_{1}, U_{2}\}$ such that $U_{2}=V_{3}$ and $V_{1}$, $V_{2}$ are contained in $U_{1}$, which implies that $\mathcal{U}_{1}\preccurlyeq \mathcal{U}_{2}$, and $N_{\mathcal{U}_{2}}$ is a $1$-simplex. Put $\mathcal{U}_{3}=\mathcal{U}_{2}$, meaning that $N_{\mathcal{U}_{3}}=N_{\mathcal{U}_{2}}$. Then $X_{1}=N_{\mathcal{U}_{1}}$, $X_{2}=N_{\mathcal{U}_{1}}\times [1,2]\cup_{\phi_{1}}N_{\mathcal{U}_{2}}$ and $X_{3}=N_{\mathcal{U}_{1}}\times [1,2]\cup_{\phi_{1}}N_{\mathcal{U}_{2}}\times [2,3]\cup_{\phi_{2}}N_{\mathcal{U}_{3}}$. The partial coarsening space of $X$ is shown in Figure \ref{Fig: 2}.
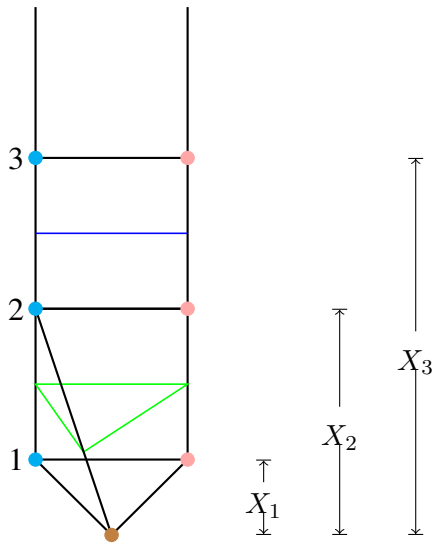
\begin{figure}[h]
\begin{tikzpicture}
\draw[thick] (0,2)--(1,1)--(2,2);
\draw[thick](0,2)--(0,4)--(2,4)--(2,2)--cycle;
\draw[semithick,green!100](0,3)--(2,3)--(0.63,2.1)--cycle;
\draw[thick](1,1)--(0,4);
\draw[thick](0,6)--(2,6)--(2,4)--(0,4)--cycle;
\draw[semithick,blue](0,5)--(2,5);
\draw[thick](0,6)--(0,8);
\draw[thick](2,6)--(2,8);
\draw[fill,brown](1,1)circle(2.5pt);
\draw[fill,cyan](0,2)circle(2.5pt);
\node at (0,2) [left]{1};
\draw[fill,cyan](0,4)circle(2.5pt);
\node at (0,4) [left]{2};
\draw[fill,cyan](0,6)circle(2.5pt);
\node at (0,6) [left]{3};
\draw[fill,red!35](2,2)circle(2.5pt);
\draw[fill,red!35](2,4)circle(2.5pt);
\draw[fill,red!35](2,6)circle(2.5pt);
\draw[|<-](3,1)--(3,1.3);
\node at (3,1.7) [below]{\small$X_{1}$};
\draw[->|](3,1.7)--(3,2);
\draw[|<-](4,1)--(4,2.2);
\node at (4,2.6) [below]{\small$X_{2}$};
\draw[->|](4,2.7)--(4,4);
\draw[|<-](5,1)--(5,3.2);
\node at (5,3.6) [below]{\small$X_{3}$};
\draw[->|](5,3.7)--(5,6);
\end{tikzpicture}
\caption{The partial coarsening space of $S^{1}$.}
\label{Fig: 2}
\end{figure}
\end{example}

The next powerful definition is beneficial in showing that the $L^{p}$ Roe algebra of the total coarsening space has trivial $K$-theory for numerous coarse structures.
\begin{definition}\cite{Wri05}
The collapsing map from $X$ to $\pi^{-1}([t,\infty))$ is the map
$$
\Phi_{t}(x,s)=\begin{cases}
(\phi_{j'-1}\circ\cdots\circ\phi_{j}(x),t), & \text{ for }(x,s)\in N_{\mathcal{U}_{j}}\times [j,j+1),\\ & \text{ with } s\leq t, t\in [j',j'+1),\\
(x,s), & \text{ for } s\geq t.
\end{cases}
$$
In fact, they are all contractive maps, and $\Phi_{t'}\circ\Phi_{t}=\Phi_{t'}$ when $t>t'$. 
\end{definition}

The following lemma tells us that the collapsing map may have an excellent contracting property if a suitable anti-\v{C}ech sequence is chosen.
\begin{lemma}\cite{Wri05}\label{lemma 4.4}
Let $W$ be a countable discrete metric space, and let $\mathcal{U}_{*}$ be an anti-\v{C}ech sequence of covers of $W$. There exist a subsequence $\mathcal{U}_{i_{k}}$ and a sequence of connecting maps $\phi_{i_{k}}: N_{\mathcal{U}_{i_{k}}}\rightarrow N_{\mathcal{U}_{i_{k+1}}}$ such that for $X=X(W, \mathcal{U}_{i_{*}})$ and for each compact subset $K$ of $X$, there exists $t$ such that $\Phi_{t}(K)$ is a point.
\end{lemma}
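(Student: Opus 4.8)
\emph{Proof proposal.} The plan is to keep $W$ fixed but pass to a subsequence of the anti-\v{C}ech covers and, crucially, to choose the connecting maps in a special ``centred'' way, so that in the resulting coarsening space every compact set is eventually crushed to a point by the collapsing maps $\Phi_{t}$. First I would reduce the problem. The metric on $X$ dominates the $[1,\infty)$-coordinate on each slab $N_{\mathcal{U}_{i}}\times[i,i+1]$, so $\pi$ is $1$-Lipschitz; hence if $K\subseteq X$ is compact then $\pi(K)$ is compact and $K\subseteq\pi^{-1}([1,n])$ for some integer $n$. Since $\Phi_{n}$ is contractive, $L:=\Phi_{n}(K)$ is a compact, hence bounded, subset of $N_{\mathcal{U}_{i_{n}}}\times\{n\}$, which I identify with $N_{\mathcal{U}_{i_{n}}}$. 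A routine computation with the formula for $\Phi_{t}$ shows that for $t\in[m,m+1)$ with $m>n$ the set $\Phi_{t}(K)$ is the image of $L$ under the composite connecting map $\psi_{m-1}\circ\cdots\circ\psi_{n}\colon N_{\mathcal{U}_{i_{n}}}\to N_{\mathcal{U}_{i_{m}}}$, placed at height $t$. Since simplicial maps send simplices to simplices, it therefore suffices to arrange that this composite is constant on the set of vertices in the carrier of $L$, for some $m$. Finally, since $L$ is bounded and $\Diam(\mathcal{U}_{i_{n}})<\infty$, a short chase with the anti-\v{C}ech bound (as in the identification of $\varinjlim_{\mathcal{U}}K_{*}(B^{p}(N_{\mathcal{U}}))$ with $K_{*}(B^{p}(X))$) shows that the union $B_{K}$ of all members of $\mathcal{U}_{i_{n}}$ indexing a carrier vertex of $L$ is a \emph{bounded} subset of $W$.

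\textbf{Construction of the tower.} Enumerate $W=\{w_{1},w_{2},\dots\}$, and for a member $V$ of any cover put $\mu(V)=\min\{j:w_{j}\in V\}$. I would build the subsequence $i_{1}<i_{2}<\cdots$ and radii $R_{1},R_{2},\dots$ inductively: set $R_{0}=1$, and having fixed $i_{1},\dots,i_{k}$ and $R_{0},\dots,R_{k-1}$, choose $R_{k}\ge 2R_{k-1}+2\Diam(\mathcal{U}_{i_{k}})$, then pick $i_{k+1}>i_{k}$ with Lebesgue number of $\mathcal{U}_{i_{k+1}}$ larger than $2R_{k}$ and at least $\Diam(\mathcal{U}_{i_{k}})$ (so $\{\mathcal{U}_{i_{k}}\}$ is still anti-\v{C}ech). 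For $j\in\mathbb{N}$ let $\tau_{k}(j)$ be the member of $\mathcal{U}_{i_{k+1}}$ of least index, in a fixed enumeration, that contains the ball $B(w_{j},R_{k})$; this exists by the choice of the Lebesgue number. Define the connecting map by $\psi_{k}([V])=[\tau_{k}(\mu(V))]$ on vertices, extended simplicially. Since $V\subseteq B(w_{\mu(V)},\Diam(\mathcal{U}_{i_{k}}))\subseteq B(w_{\mu(V)},R_{k})\subseteq\tau_{k}(\mu(V))$, the map $\psi_{k}$ is a legitimate connecting map, and it is simplicial because $\bigcap_{j}V_{j}\neq\varnothing$ forces $\bigcap_{j}\tau_{k}(\mu(V_{j}))\supseteq\bigcap_{j}V_{j}\neq\varnothing$. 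Take $X=X(W,\mathcal{U}_{i_{*}})$ with these connecting maps.

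\textbf{Collapsing.} For a carrier vertex $[V]$ of $L$, set $j_{0}=\mu(V)$ and $j_{\ell+1}=\mu(\tau_{n+\ell}(j_{\ell}))$, so that $(\psi_{m-1}\circ\cdots\circ\psi_{n})([V])=[\tau_{m-1}(j_{m-1-n})]$. Two facts drive the argument: since $w_{j_{\ell}}\in\tau_{n+\ell}(j_{\ell})$ one has $j_{\ell+1}\le j_{\ell}$; and if $d(w_{j_{\ell}},w_{1})<R_{n+\ell}$ then $w_{1}\in B(w_{j_{\ell}},R_{n+\ell})\subseteq\tau_{n+\ell}(j_{\ell})$, forcing $j_{\ell+1}=1$, after which $j_{\ell'}=1$ for all $\ell'>\ell$. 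Because $w_{j_{r-1}}$ and $w_{j_{r}}$ both lie in the member $\tau_{n+r-1}(j_{r-1})\in\mathcal{U}_{i_{n+r}}$, one gets $d(w_{j_{\ell}},w_{1})\le d(w_{j_{0}},w_{1})+\sum_{r=1}^{\ell}\Diam(\mathcal{U}_{i_{n+r}})$, and $d(w_{j_{0}},w_{1})\le\Diam(B_{K}\cup\{w_{1}\})=:C_{0}<\infty$, the same bound for every carrier vertex. The recursion $R_{k}\ge 2R_{k-1}+2\Diam(\mathcal{U}_{i_{k}})$ yields $R_{n+\ell}\ge 2^{\ell}R_{n}+2\sum_{r=1}^{\ell}\Diam(\mathcal{U}_{i_{n+r}})$, hence $R_{n+\ell}>C_{0}+\sum_{r=1}^{\ell}\Diam(\mathcal{U}_{i_{n+r}})$ as soon as $2^{\ell}R_{n}>C_{0}$. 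Fixing such an $\ell$, which depends only on $C_{0}$ and $R_{n}$ and is therefore common to all carrier vertices, gives $j_{\ell+1}=1$ for every carrier vertex; taking $m=n+\ell+2$ then gives $(\psi_{m-1}\circ\cdots\circ\psi_{n})([V])=[\tau_{m-1}(1)]$ for all carrier vertices of $L$. Thus this composite is constant on the carrier of $L$, so it collapses $L$ to the single point $[\tau_{m-1}(1)]$, and $\Phi_{t}(K)$ is a single point for every $t\in[m,m+1)$.

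\textbf{The main obstacle.} The delicate step is the construction of the connecting maps $\psi_{k}$: they must simultaneously be honest connecting maps ($V\subseteq\psi_{k}[V]$, simplicial), depend on a member $V$ only through a single ``centre'' $w_{\mu(V)}$ so that members with nearby centres are funnelled toward a common target as $k$ grows, and be dovetailed with an inductive choice of the subsequence indices $i_{k}$ and the radii $R_{k}$ for which $R_{n+\ell}$ eventually outgrows the ($K$-dependent but finite) distance $C_{0}$ one must travel to engulf $w_{1}$. Everything else — the $1$-Lipschitz property of $\pi$, the boundedness of $B_{K}$, the doubling estimate on $R_{n+\ell}$, and the bookkeeping with $\mu$ — I expect to be routine.
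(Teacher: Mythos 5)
Your proposal is correct, and since the paper only cites this lemma from \cite{Wri05} without reproducing a proof, the right comparison is with Wright's argument: your construction is essentially the same basepoint-funnelling idea (pass to a subsequence with fast-growing Lebesgue numbers and rig the connecting maps so that every vertex whose member lies in an ever-larger ball around a fixed point is sent to one distinguished vertex), merely packaged differently via the least-index centre $\mu$ and the doubling radii $R_{k}$, which funnel to $w_{1}$ over several steps rather than in one collapse. The one step you flagged as routine --- that the members of $\mathcal{U}_{i_{n}}$ carrying the compact set $L$ form a bounded subset $B_{K}$ of $W$ --- is indeed exactly the chain argument via Lemma \ref{lemma A.5} used in the proof of Theorem \ref{th 4.12}, so the proof goes through.
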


When discussing the $L^{p}$ coarse Baum-Connes conjecture for both bounded and $C_{0}$ coarse structures, we usually assume that the metric space has bounded geometry. This assumption serves to eliminate potential counterexamples in both the bounded and $C_{0}$ cases. Therefore, we will consider uniformly discrete bounded geometry metric spaces. Recall that a uniformly discrete metric space $X$ has bounded geometry if for all $R>0$, the number of points in the $R$-ball $B_{R}(x)$ is bounded and independent of $x\in X$.
Moreover, Lemma \ref{lemma 3.11} and Lemma \ref{lemma 4.4} yield that the $K$-theory of the $L^{p}$ Roe algebra of the total coarsening space with the $C_{0}$ coarse structure is zero.

\begin{theorem}\label{th 4.5}
Let $W$ be a uniformly discrete bounded geometry metric space, and let $\mathcal{U}_{*}$ be an anti-\v{C}ech sequence for $W$. Then $B^{p}(X(W,\mathcal{U}_{*})_{0})$ has trivial $K$-theory. 
\end{theorem}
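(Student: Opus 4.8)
The plan is to exhibit an Eilenberg swindle on $X_{0}:=X(W,\mathcal{U}_{*})_{0}$ and then invoke Lemma~\ref{lemma 3.11}. First I would make a reduction: by Lemma~\ref{lemma 4.4}, applied to $W$ (which is countable, a proper metric space being separable), one may pass to a subsequence $\mathcal{U}_{i_{*}}$ of $\mathcal{U}_{*}$ for which every compact subset of the coarsening space $X(W,\mathcal{U}_{i_{*}})$ is carried to a single point by some collapsing map $\Phi_{t}$; since $X(W,\mathcal{U}_{*})$ and $X(W,\mathcal{U}_{i_{*}})$ are coarsely equivalent for both the bounded and the $C_{0}$ structures (cf.\ \cite{Wri05}), their associated $L^{p}$ Roe algebras have the same $K$-theory, so it suffices to treat a coarsening space of this collapsing type, which I denote $X$ below.

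Next I would build a sequence of self-maps $\alpha_{k}$ of $X$ of the form $\alpha_{k}(x)=\Phi_{\tau_{k}(x)}(x)$, where $\tau_{0}=\pi$ and $\tau_{k+1}(x)=\tau_{k}(x)+\delta\bigl(\Phi_{\tau_{k}(x)}(x)\bigr)$ for a fixed positive continuous $C_{0}$ function $\delta$ on $X$, chosen to be comparable to $c/\pi$ along the vertical directions near the centre of $X$ while still tending to $0$ in the simplicial directions. Then $\alpha_{0}=\mathrm{id}$ since $\tau_{0}=\pi$, and standard checks show that each $\alpha_{k}$ is a coarse map for the $C_{0}$ structure: bornologousness holds because the collapsing maps are contractions (a contraction is automatically bornologous for the $C_{0}$ structure), and properness follows from the bounded geometry of $W$, which makes the connecting maps $\phi_{i}$ have uniformly finite point preimages, so that $\Phi_{t}$---and hence $\alpha_{k}$---has compact preimages of compact sets.

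One then checks the three conditions of Lemma~\ref{lemma 3.11}. \emph{Properly supported}: $\tau_{k}(x)\to\infty$ for every $x$ (otherwise $\alpha_{k}(x)$ would converge, contradicting $\delta(\alpha_{k}(x))=\tau_{k+1}(x)-\tau_{k}(x)\to 0$ together with $\delta\in C_{0}$), so $\mathrm{Range}(\alpha_{k})$ eventually misses every bounded set. \emph{Uniformly close steps}: $d\bigl(\alpha_{k}(x),\alpha_{k+1}(x)\bigr)=\tau_{k+1}(x)-\tau_{k}(x)=\delta(\alpha_{k}(x))$, so every step lies in $\{(y,y'):d(y,y')\le\delta(y)\}$, which is $C_{0}$-controlled precisely because $\delta$ is $C_{0}$. \emph{Uniformly controlled}: for a $C_{0}$-controlled $A$ the pairs of $A$ of non-negligible diameter lie in a compact $K$; by Lemma~\ref{lemma 4.4} some $\Phi_{t(K)}$ sends all of $K$ to a single point $q$, so for large $k$ the map $\alpha_{k}|_{K}$ is $x\mapsto\Phi_{\tau_{k}(x)}(q)$, and the comparison $\delta\sim c/\pi$ turns the recursion $\tau\mapsto\tau+\delta(\Phi_{\tau}(q))$ into a contraction at infinity, so $d(\alpha_{k}(x),\alpha_{k}(x'))\to 0$ uniformly over $x,x'\in K$; hence $\bigcup_{k}(\alpha_{k}\times\alpha_{k})(A)$ is $C_{0}$-controlled. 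Lemma~\ref{lemma 3.11} then gives $K_{*}(B^{p}(X_{0}))=0$.

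The main obstacle is reconciling the last two conditions in the $C_{0}$ setting: ``uniformly close steps'' forces the $\alpha_{k}$ to move points by amounts that shrink at infinity, while ``properly supported'' forces their ranges to escape to infinity, and it is exactly this tension that dictates the recursively reparametrised collapse above rather than a naive iteration of a single $\Phi_{t}$. The ``uniformly controlled'' estimate is the genuinely delicate step, and this is where Lemma~\ref{lemma 4.4} is indispensable: only because every compact set is eventually collapsed to a point does $\alpha_{k}$ depend, far up the coarsening direction, on $x$ solely through $\tau_{k}(x)$. One must also verify that the recursively defined $\tau_{k}$ are continuous and strictly increasing in $k$, which constrains the admissible choices of $\delta$.
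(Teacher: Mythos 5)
Your overall strategy is the same as the paper's: an Eilenberg swindle obtained by feeding reparametrised collapsing maps into Lemma~\ref{lemma 3.11}. The paper takes $\alpha_{k}(x)=\Phi_{r_{k}(d(x,x_{0}))}(x)$ with $r_{k}(t)=\max(\log k-t,0)$ for a fixed basepoint $x_{0}$, so the collapse time is an explicit $1$-Lipschitz function of $d(x,x_{0})$ and no subsequence of $\mathcal{U}_{*}$ is taken; you instead generate the times by the recursion $\tau_{k+1}=\tau_{k}+\delta\circ\alpha_{k}$ after first passing to the Lemma~\ref{lemma 4.4} subsequence. Your reduction step is a genuine gap: you assert that $X(W,\mathcal{U}_{*})$ and $X(W,\mathcal{U}_{i_{*}})$ are coarsely equivalent for the bounded and $C_{0}$ structures ``(cf.\ \cite{Wri05})'' and hence have $L^{p}$ Roe algebras with the same $K$-theory, but no such statement is proved in \cite{Wri05}, and it is implausible as stated. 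The natural comparison maps reparametrise the coarsening direction, compressing $[i_{k},i_{k+1}]$ to $[k,k+1]$ and stretching back by the factors $i_{k+1}-i_{k}$, which are unbounded in the situation produced by Lemma~\ref{lemma 4.4}; since $\pi$ is $1$-Lipschitz, such a map sends pairs of uniformly small distance to pairs of unbounded distance, so it is not bornologous even for the bounded structure, let alone a $C_{0}$ coarse equivalence. Because Theorem~\ref{th 4.5} is stated for an arbitrary anti-\v{C}ech sequence, you must either run the swindle on the given sequence (as the paper's construction attempts) or argue separately that it suffices to treat a well-chosen sequence, e.g.\ because $KX^{p}_{*}(W)$ and the later constructions are independent of the choice of $\mathcal{U}_{*}$; an unproved $K$-theory invariance of $B^{p}(X_{0})$ under passing to a subsequence is essentially as strong as the theorem itself and cannot simply be cited.

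The second gap is that the ``uniformly controlled'' verification, which you correctly identify as the delicate point, is not actually carried out. You only treat the pairs of a $C_{0}$-controlled set $A$ lying in the compact exceptional set $K$; the small-diameter pairs escaping to infinity are not addressed, and for them $|\tau_{k}(x)-\tau_{k}(x')|$ can a priori accumulate over infinitely many steps: with only a global Lipschitz constant $L$ for $\delta$ one gets $|\delta(\alpha_{j}(x))-\delta(\alpha_{j}(x'))|\le L\,d(\alpha_{j}(x),\alpha_{j}(x'))$ and the resulting recursion grows like $(1+L)^{k}$, so one must design $\delta$ with a decaying modulus of continuity along trajectories and prove summability. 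None of this follows from ``$\delta\sim c/\pi$'', which is moreover in tension with $\delta\in C_{0}(X)$ (a function of $\pi$ alone is not $C_{0}$ when $W$ is unbounded), and even on $K$ the asserted ``contraction at infinity'' of the gap recursion is a claim, not a proof. Relatedly, ``properly supported'' does not follow from the pointwise divergence $\tau_{k}(x)\to\infty$: what is needed is uniform escape of the range from each compact $B$, which requires first confining the candidates $x$ with $\alpha_{k}(x)\in B$ to a compact set (e.g.\ using that, by bounded geometry, $\Phi_{T}^{-1}(\Phi_{T}(B))\cap\pi^{-1}([1,T])$ is compact) and then using a positive lower bound for $\delta$ on the relevant compact region; you mention the properness ingredient but the inference as written is a non sequitur. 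By contrast, the paper's ansatz makes the $k$-dependence explicit and Lipschitz in $d(x,x_{0})$, which is what renders the step and Lipschitz estimates immediate; your instinct that the collapsing-to-a-point property of Lemma~\ref{lemma 4.4} must enter the control of compact pairs is sound, but the way you import it, and the unverified quantitative behaviour of $\delta$, leave the proof incomplete.
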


\begin{proof}
This theorem can be proved by the same method as employed in [\cite{Wri05}, Theorem 4.5]. To be specific, we construct a sequence of maps $\{\alpha_{k}\}$ satisfying the hypotheses of Lemma \ref{lemma 3.11} as follows:

Put $$r_{k}: \mathbb{R}^{+}\rightarrow\mathbb{R}^{+},\quad r_{k}(t)=\begin{cases}logk-t, & 0\leq t\leq logk\\ 0, & t\geq log k\end{cases}.$$
We choose a basement $x_{0}\in X$ and define$$\alpha_{k}: X\rightarrow X, x\mapsto \Phi_{r_{k}(d(x,x_{0}))}(x).$$
The verification that $\{\alpha_{k}\}$ is properly supported, uniformly controlled, and has uniformly close steps is left to the readers.
\end{proof}

\subsection{Reformulation of both sides of the $L^{p}$ coarse Baum-Connes conjecture}
In this subsection, we will reconstruct the left-hand side of the $L^{p}$ coarse Baum-Connes conjecture using the ideal in the $L^{p}$ Roe algebra of the total coarsening space equipped with the $C_{0}$ coarse structure, and reformulate the right-hand side of the $L^{p}$ coarse Baum-Connes conjecture using a corresponding ideal in the $L^{p}$ Roe algebra on the fusion coarse structure.

To rewrite the left-hand side of the $L^{p}$ coarse Baum-Connes conjecture, we introduce the following ideal.
\begin{definition}
Let $I_{0}=I_{0}(W,\mathcal{U}_{*})=\varinjlim\limits_{i}B^{p}(X_{i}(W,\mathcal{U}_{*})_{0})$.
\end{definition}
One may regard $I_{0}$ as an ideal of $B^{p}(X(W,\mathcal{U}_{*})_{0})$. Indeed, every $L^{p}$ Roe algebra $B^{p}(X_{i}(W,\mathcal{U}_{*})_{0})$ is contained in $B^{p}(X(W,\mathcal{U}_{*})_{0})$, and the closure of their union forms an ideal $I_{0}$.

It is a basic fact that the $L^{p}$ coarse $K$-homology encodes the information about the small-scale topological structure of a metric space for any coarse structure. Combining this fact and the $L^{p}$ coarse Baum-Connes conjecture for $C_{0}$ coarse geometry (Theorem \ref{th 3.18}), we can deduce that the $L^{p}$ coarse $K$-homology on the bounded structure can be identified with the $K$-theory of the ideal $I_{0}$ on the finer $C_{0}$ structure.  

\begin{theorem}\label{th 4.7}
Let $W$ be a uniformly discrete bounded geometry metric space, and let $\mathcal{U}_{*}$ be an anti-\v{C}ech sequence for $W$. Then $KX^{p}_{*}(W)\cong K_{*}(I_{0}(W,\mathcal{U}_{*}))$.
\end{theorem}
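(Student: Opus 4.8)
The plan is to identify $KX^p_*(W)$ with a direct limit built from the partial coarsening spaces, and then match that limit term-by-term with $K_*(I_0(W,\mathcal{U}_*))$ using the $C_0$-coarse Baum-Connes isomorphism from Theorem \ref{th 3.18}. Recall that by Definition \ref{def 2.21}, $KX^p_*(W) = \varinjlim_i K_*(B^p_L(N_{\mathcal{U}_i}))$, where the maps are induced by the simplicial connecting maps $\phi_i\colon N_{\mathcal{U}_i}\to N_{\mathcal{U}_{i+1}}$. The first step is to relate each $N_{\mathcal{U}_i}$ to the partial coarsening space $X_i = X_i(W,\mathcal{U}_*)$. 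Since $X_i = \pi^{-1}([1,i])$ deformation retracts onto its top face $N_{\mathcal{U}_i}$ (the mapping-telescope structure collapses each slab $N_{\mathcal{U}_j}\times[j,j+1]$ onto $N_{\mathcal{U}_{j+1}}$ via $\phi_j$), and since this retraction is continuous and contractive on each simplex cross interval — hence a genuine (Lipschitz, proper) homotopy equivalence for the path metric — the homotopy invariance of $L^p$ coarse $K$-homology gives $K_*(B^p_L(X_i)) \cong K_*(B^p_L(N_{\mathcal{U}_i}))$, compatibly with the inclusions $X_i \hookrightarrow X_{i+1}$ and the maps $\phi_i$. Therefore $KX^p_*(W) \cong \varinjlim_i K_*(B^p_L(X_i))$.

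Next I would invoke Theorem \ref{th 3.18} on each partial coarsening space. Each $X_i$ is a finite-dimensional simplicial complex (a finite mapping telescope of nerves, each of which has dimension bounded since $W$ has bounded geometry), equipped with a uniform spherical metric, so Theorem \ref{th 3.18} applies and yields $KX^p_*(X_i) \cong K_*(B^p((X_i)_0))$; moreover, for a finite-dimensional simplicial complex with uniform spherical metric the bounded and $C_0$ coarse $K$-homologies coincide with the ordinary small-scale $K$-homology, so $KX^p_*(X_i)$ is just $K_*(B^p_L(X_i))$. Combining, $K_*(B^p_L(X_i)) \cong K_*(B^p((X_i)_0)) = K_*\big(B^p(X_i(W,\mathcal{U}_*)_0)\big)$. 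It remains to check that these isomorphisms are natural with respect to the inclusions $X_i \hookrightarrow X_{i+1}$ — i.e. that the naturality of the two transformations $K^{p,\infty}_*(\cdot)\to KX^p_*(\cdot)$ and $KX^p_*(\cdot)\to K_*(B^p(\cdot))$ used inside the proof of Theorem \ref{th 3.18} is compatible with simplicial inclusions — so that one may pass to the direct limit. Passing to the limit then gives
$$
KX^p_*(W) \cong \varinjlim_i K_*(B^p_L(X_i)) \cong \varinjlim_i K_*\big(B^p(X_i(W,\mathcal{U}_*)_0)\big) = K_*(I_0(W,\mathcal{U}_*)),
$$
the last equality because $K$-theory commutes with direct limits of Banach algebras and $I_0 = \varinjlim_i B^p(X_i(W,\mathcal{U}_*)_0)$ by definition.

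The main obstacle I anticipate is the naturality/compatibility bookkeeping in the middle step. One has to be careful that the homotopy equivalence $X_i \simeq N_{\mathcal{U}_i}$ is not just a pointwise statement but intertwines the telescope inclusion $X_i\hookrightarrow X_{i+1}$ with the simplicial map $\phi_i$ up to coarse homotopy, and that the $C_0$-coarse Baum-Connes isomorphism of Theorem \ref{th 3.18}, which is assembled through a binary decomposition and Mayer-Vietoris, behaves functorially under these inclusions. A clean way to handle this is to observe that the inclusion $(X_i)_0 \hookrightarrow (X_{i+1})_0$ is itself a $C_0$-coarse map whose restriction to a neighborhood of the top face realizes the mapping-cylinder of $\phi_i$; the transformations in Theorem \ref{th 3.18} are natural transformations of functors on coarse spaces and coarse maps, so the required squares commute automatically. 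A secondary technical point is confirming that $I_0$ is genuinely a closed two-sided ideal in $B^p(X(W,\mathcal{U}_*)_0)$ — this follows since each $X_i$ is a closed, $C_0$-coarsely embedded subspace that is "invariant at finite scale" under the $C_0$ propagation condition, but it should be spelled out — though strictly speaking only the identification of its $K$-theory with $\varinjlim_i K_*(B^p(X_i(W,\mathcal{U}_*)_0))$ is needed here, which is immediate from continuity of $K$-theory.
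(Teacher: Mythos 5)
Your first step is fine and consistent with the paper's setup: the collapsing maps $\Phi_i$ retract $X_i$ contractively onto the top slice $\pi^{-1}\{i\}\cong N_{\mathcal{U}_i}$, compatibly (up to Lipschitz homotopy) with the connecting maps $\phi_i$, so $KX^p_*(W)\cong\varinjlim_i K_*(B^p_L(X_i))$ is legitimate. The gap is the middle step. Theorem \ref{th 3.18} does \emph{not} say that the $C_0$ coarse $K$-homology of a finite-dimensional complex agrees with its ordinary $K$-homology $K_*(B^p_L(X_i))$; it says it agrees with the $K$-homology at infinity $K^{p,\infty}_*(X_i)=\varinjlim_{C\subseteq X_i\text{ compact}}K_*(B^p_L(X_i/C))$, which kills everything supported near compact sets. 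These two groups differ in general: already for an infinite uniformly discrete space (a $0$-dimensional complex) the computation inside the proof of Theorem \ref{th 3.17} gives $K_0(B^p_L(X))=\prod_x\mathbb{Z}$, while $KX^p_0(X_0)\cong K^{p,\infty}_0(X)$ is the proper quotient of this group by the finitely supported tuples with vanishing sum. So the asserted term-by-term isomorphism $K_*(B^p_L(X_i))\cong K_*(B^p((X_i)_0))$ is false, and the chain of isomorphisms breaks at its key link.

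What makes the theorem true is that the discrepancy between ordinary $K$-homology and $K$-homology at infinity disappears only in the limit along the anti-\v{C}ech maps, and that is where the real work lies. The paper first uses that any compact subset of $N_{\mathcal{U}_i}$ is crushed to a vertex by a suitable composition $\phi_{i+j}\circ\cdots\circ\phi_i$, which gives $\varinjlim_i K_*(B^p_L(N_{\mathcal{U}_i}))\cong\varinjlim_i K^{p,\infty}_*(N_{\mathcal{U}_i})$, and only then applies Theorem \ref{th 3.18} to obtain $\varinjlim_i K_*(B^p((N_{\mathcal{U}_i})_0))$. A second, equally essential step -- entirely absent from your proposal -- is the passage from the slice $N_{\mathcal{U}_i}\cong\pi^{-1}\{i\}$ to the partial coarsening space $X_i$ in the $C_0$ category: the inclusion $\pi^{-1}\{i\}\hookrightarrow X_i$ is \emph{not} a $C_0$ coarse equivalence (every point of $X_i$ lies within distance roughly $i$ of the top slice, which is useless for the $C_0$ structure), so the paper instead applies the Eilenberg swindle of Theorem \ref{th 4.5} to the tail to get $K_*(B^p(\pi^{-1}[i,\infty))_0)=0$ and then the Mayer-Vietoris sequence of Theorem \ref{th 3.2} for the coarsely excisive decomposition $X=X_i\cup\pi^{-1}[i,\infty)$ to conclude $K_*(B^p(\pi^{-1}\{i\})_0)\cong K_*(B^p((X_i)_0))$. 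Your shortcut through ``ordinary $K$-homology equals $C_0$ coarse $K$-homology'' hides exactly these two pieces of content, so the proof does not go through as written; the secondary issues you flag (naturality of the transformations in Theorem \ref{th 3.18}, $I_0$ being an ideal, continuity of $K$-theory) are genuine but minor and do not touch this obstruction.
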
 

\begin{proof}
First, we show that $KX^{p}_{*}(W)\cong\varinjlim\limits_{i}K_{*}(B^{p}(N_{\mathcal{U}_{i}})_{0})$. Since for any compact subset $C$ of $N_{\mathcal{U}_{i}}$, there exists a sufficiently large integer $j$ such that $\phi_{i+j}\circ\cdots\circ\phi_{i}(C)$ is a vertex of $N_{\mathcal{U}_{i+j+1}}$, we obtain 
$$
KX^{p}_{*}(W):=\varinjlim\limits_{i}K_{*}(B^{p}_{L}(N_{\mathcal{U}_{i}}))\cong\varinjlim\limits_{i}\varinjlim\limits_{C\subseteq N_{\mathcal{U}_{i}},\text{ compact }}K_{*}(B^{p}_{L}(N_{\mathcal{U}_{i}}/C)).
$$
By Theorem \ref{th 3.18}, we have
$$
KX^{p}_{*}(W)\cong\varinjlim\limits_{i}KX^{p}_{*}((N_{\mathcal{U}_{i}})_{0})\cong\varinjlim\limits_{i}K_{*}(B^{p}(N_{\mathcal{U}_{i}})_{0}).
$$
Next, we demonstrate that the inclusion $\pi^{-1}\{i\}\hookrightarrow X_{i}$ gives rise to an isomorphism on $K$-theory. Thanks to Theorem \ref{th 4.5}, we know that $K_{*}(B^{p}(X_{0}))=0$. Taking the anti-\v{C}ech sequence as $\mathcal{U}_{i}$, $\mathcal{U}_{i+1},\cdots$ in Theorem \ref{th 4.5}, we get $K_{*}(B^{p}(\pi^{-1}[i,\infty))_{0})=0$. It is straightforward to check that the decomposition $X=X_{i}\cup\pi^{-1}[i,\infty)$ is coarsely excisive and $X_{i}\cap\pi^{-1}[i,\infty)=\pi^{-1}\{i\}$. According to Theorem \ref{th 3.2}, there exists a six-term Mayer-Vietoris sequence:
$$  \xymatrix@C=0.6cm@R=0.6cm{
  K_{1}(B^{p}(\pi^{-1}\{i\})_{0})\ar[r] & K_{1}(B^{p}(X_{i})_{0})\oplus K_{1}(B^{p}(\pi^{-1}[i,\infty))_{0})\ar[r]&  K_{1}(B^{p}(X_{0}))\ar[d]_{\partial}\\
   K_{0}(B^{p}(X_{0})) \ar[u]_{\partial}& K_{0}(B^{p}(X_{i})_{0})\oplus K_{0}(B^{p}(\pi^{-1}[i,\infty))_{0})\ar[l]& K_{0}(B^{p}(\pi^{-1}\{i\})_{0}).\ar[l]}$$
Furthermore, we yield an isomorphism $K_{*}(B^{p}(\pi^{-1}\{i\})_{0})\cong K_{*}(B^{p}(X_{i})_{0})$ induced by the inclusion $\pi^{-1}\{i\}\hookrightarrow X_{i}$.

Moreover, we prove that $\pi^{-1}\{i\}$ is $C_{0}$ coarsely equivalent to $N_{\mathcal{U}_{i}}$. Note that $N_{\mathcal{U}_{i}}$ is equipped with a uniform spherical metric, and $\pi^{-1}\{i\}$ has the metric inherited from $X$. Let $\varepsilon<2$ and $\varepsilon$ be less than the distance between any two components of $N_{\mathcal{U}_{i}}$. Then $d(x,y)=d_{N_{\mathcal{U}_{i}}}(x,y)$ when $d(x,y)<\varepsilon$. Therefore, these two metrics are $C_{0}$ coarsely equivalent. Thus, we obtain an isomorphism $B^{p}(\pi^{-1}\{i\})_{0}\cong B^{p}(N_{\mathcal{U}_{i}})_{0}$.

As a result, $B^{p}(N_{\mathcal{U}_{i}})_{0}\rightarrow B^{p}(X_{i})_{0}$ induces an isomorphism on $K$-theory. Hence
$$
KX^{p}_{*}(W)=\varinjlim\limits_{i}K_{*}(B^{p}(N_{\mathcal{U}_{i}})_{0})\cong \varinjlim\limits_{i}K_{*}(B^{p}(X_{i})_{0})\cong K_{*}(I_{0}).
$$
\end{proof}

Recall from \cite{Wri05} that the fusion coarse structure is a structure finer than the bounded coarse structure but not as fine as the $C_{0}$ coarse structure.

\begin{definition}[The fusion coarse structure]\cite{Wri05}
Let $X=X(W,\mathcal{U}_{*})$ be the total coarsening space, and let $X_{i}=X_{i}(W,\mathcal{U}_{*})$ be the partial coarsening spaces. The fusion coarse structure on $X$, denoted $X_{f}=X(W,\mathcal{U}_{*})_{f}$, is the coarse structure for which a set $A\subseteq X\times X$ is controlled if and only if
\begin{itemize}
\item $d|_{A}$ is bounded, i.e. $A$ is controlled for the bounded coarse structure, and
\item there exists $i$ such that $d|_{{A\backslash(X_{i}\times X_{i})}}$ is $C_{0}$.
\end{itemize}
\end{definition}

\begin{remark}
The first item of this definition means that for any partial coarsening space $X_{i}$, this structure is indeed a bounded structure.
\end{remark}

To reformulate the right-hand side of the $L^{p}$ coarse Baum-Connes conjecture, we introduce the next ideal.
\begin{definition}
Let $I_{f}=I_{f}(W,\mathcal{U}_{*})=\varinjlim\limits_{i}B^{p}(X_{i}(W,\mathcal{U}_{*}))$.
\end{definition}
One can consider $I_{f}$ as an ideal of $B^{p}(X(W,\mathcal{U}_{*})_{f})$ as the facts that the bounded and fusion coarse structures agree on $X_{i}(W,\mathcal{U}_{*})$, and every $L^{p}$ Roe algebra $B^{p}(X_{i}(W,\mathcal{U}_{*}))$ is included in $B^{p}(X(W,\mathcal{U}_{*})_{f})$, then the closure of their union forms an ideal $I_{f}$.

The next technical lemma illustrates the calculation of the distance from a simplex to a vertex in the same component. 
\begin{lemma}\cite{Wri05}\label{lemma A.5}
Let $X$ be a locally finite simplicial complex with a uniform spherical metric. Then the distance from a vertex $[V]$ of $X$ to a simplex $\sigma$ in the same component of $X$ is ${\pi}/{2}$ times the length (i.e. number of edges) of the shortest simplicial path from {\color{red}$[V]$} to $\sigma$. In other words, there are no shortcuts through the interior of a simplex.
\end{lemma}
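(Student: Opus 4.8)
The plan is to prove the two inequalities in the claimed equality separately. Write $k$ for the number of edges in a shortest edge-path in $X$ from $[V]$ to a vertex of $\sigma$, and, since $\sigma$ lies in the component of $[V]$, assume without loss of generality that $X$ is connected, so that $d$ is the path metric. The upper bound $d([V],\sigma)\le\frac{\pi}{2}k$ is immediate: concatenate the great-circle arcs along a shortest edge-path $[V]=[V_0],[V_1],\dots,[V_k]$ with $[V_k]\in\sigma$; the vertices of the standard spherical $m$-simplex are orthonormal, so every edge of every simplex has length $\arccos 0=\frac{\pi}{2}$, and the concatenated path has length $\frac{\pi}{2}k$.

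The content of the lemma is the lower bound, and I would obtain it by exhibiting a single function $\Psi\colon X\to[0,\infty)$ which is $1$-Lipschitz for the path metric, vanishes at $[V]$, and satisfies $\Psi\ge\frac{\pi}{2}k$ on $\sigma$; then for any $y\in\sigma$ one gets $d([V],\sigma)\ge d([V],y)\ge\Psi(y)-\Psi([V])\ge\frac{\pi}{2}k$, which with the upper bound finishes the proof. To construct $\Psi$, for a vertex $v$ let $\ell(v)$ be the number of edges in a shortest edge-path from $[V]$ to $v$. Since any two vertices of a simplex $\Delta$ span an edge, the values of $\ell$ on $\mathrm{vert}(\Delta)$ lie in the pair $\{a_\Delta,a_\Delta+1\}$ with $a_\Delta:=\min\{\ell(v):v\in\mathrm{vert}(\Delta)\}$; let $S_0,S_1\subseteq\mathrm{vert}(\Delta)$ be the two parts, let $P_0,P_1$ be the orthogonal projections of $\mathbb{R}^{\mathrm{vert}(\Delta)}$ onto the coordinate subspaces $\mathbb{R}^{S_0},\mathbb{R}^{S_1}$, and, identifying $\bar\Delta$ with a subset of the unit sphere in $\mathbb{R}^{\mathrm{vert}(\Delta)}$, set
\[
\Psi_\Delta(x)=\frac{\pi}{2}a_\Delta+\arcsin\lvert P_1x\rvert,\qquad x\in\bar\Delta .
\]
Checking the three possibilities for the $\ell$-values on a proper face of $\Delta$ shows $\Psi_\Delta(x)$ is independent of the simplex $\Delta$ containing $x$, so the $\Psi_\Delta$ paste to a well-defined continuous $\Psi$ on $X$. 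Moreover $\Psi([V])=0$ because the carrier of $[V]$ is $\{[V]\}$, and $\Psi\ge\frac{\pi}{2}k$ on $\bar\sigma$ because $a_{\bar\sigma}=k$ and $\arcsin\lvert P_1x\rvert\ge0$.

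It remains to see that $\Psi$ is $1$-Lipschitz, and since the metric on each component is the path metric and the complex is locally finite, it suffices to verify this on each closed simplex, i.e.\ to prove $\lvert\Psi_\Delta(x)-\Psi_\Delta(y)\rvert\le d_\Delta(x,y)=\arccos\langle x,y\rangle$ for $x,y\in\bar\Delta$ (the intrinsic simplex metric agrees with the restriction of the round metric, as $\bar\Delta$ is geodesically convex in the sphere). Writing $\alpha=\arcsin\lvert P_1x\rvert$ and $\beta=\arcsin\lvert P_1y\rvert$ in $[0,\frac{\pi}{2}]$, and using $\lvert P_0x\rvert=\cos\alpha$, $\lvert P_0y\rvert=\cos\beta$ (the coordinate vectors of $x,y$ have unit $\ell^2$-norm and $S_0\sqcup S_1=\mathrm{vert}(\Delta)$), orthogonality of $P_0,P_1$ together with Cauchy--Schwarz on each summand gives
\[
\langle x,y\rangle=\langle P_0x,P_0y\rangle+\langle P_1x,P_1y\rangle\le\cos\alpha\cos\beta+\sin\alpha\sin\beta=\cos(\alpha-\beta),
\]
whence $\arccos\langle x,y\rangle\ge\lvert\alpha-\beta\rvert=\lvert\Psi_\Delta(x)-\Psi_\Delta(y)\rvert$; this is precisely the ``no shortcut through the interior of a simplex'' assertion.

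I expect the main obstacle to be the bookkeeping that makes $\Psi$ well defined across shared faces: the observation that $\ell$ takes at most two consecutive values on any simplex is exactly what makes the three face-cases agree, and it also forces $\Psi$ to behave correctly at the ``outer'' faces (where $\lvert P_1x\rvert=1$ and $\arcsin$ contributes the full $\frac{\pi}{2}$). Once that is in place, the global $1$-Lipschitz property follows from the single trigonometric estimate above, and the remaining ingredients — the reduction to one component, the edge-path upper bound, and assembling a function that is $1$-Lipschitz on each cell into a globally $1$-Lipschitz one on a locally finite complex — are routine.
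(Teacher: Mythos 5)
Your proof is correct. Note first that this paper does not actually prove the lemma — it is quoted from Wright's paper [Wri05] — so there is no in-paper argument to compare against; judged on its own, your argument is sound and self-contained. The upper bound by concatenating edges of length $\pi/2$ is right, and the heart of the matter, the lower bound, is handled correctly: the level function $\ell$ takes at most two consecutive values on the vertex set of any simplex, the two face-cases make the locally defined functions $\Psi_\Delta$ agree on shared faces, and the estimate $\langle x,y\rangle\le\cos\alpha\cos\beta+\sin\alpha\sin\beta=\cos(\alpha-\beta)$ (with $d_{\bar\Delta}(x,y)=\arccos\langle x,y\rangle$ by geodesic convexity of the spherical simplex) gives exactly the $1$-Lipschitz property on each closed cell; the globalization over a locally finite complex with the path metric is indeed routine (a neighborhood of any point lies in the union of the closed simplices containing it, and a connectedness argument along rectifiable paths finishes it). This near/far splitting of the vertex coordinates is the same essential mechanism as in Wright's appendix; packaging it as a single global $1$-Lipschitz function $\Psi$ with $\Psi([V])=0$ and $\Psi\ge\frac{\pi}{2}k$ on $\bar\sigma$ is a clean way to organize it. One cosmetic slip: the chain ``$d([V],\sigma)\ge d([V],y)$'' is written backwards; what you mean (and what the argument gives) is that $d([V],y)\ge\frac{\pi}{2}k$ for every $y\in\bar\sigma$, hence the infimum $d([V],\sigma)\ge\frac{\pi}{2}k$.
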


As a consequence, we show that the $K$-theory of the $L^{p}$ Roe algebra on the bounded structure can be identified with the $K$-theory of the ideal $I_{f}$ on the fusion structure, and hence obtain an equivalent statement of the $L^{p}$ coarse Baum-Connes conjecture.  
\begin{theorem}\label{th 4.12}
There is an isomorphism $K_{*}(I_{f})\cong K_{*}(B^{p}(W))$, and moreover the forgetful map $I_{0}\hookrightarrow I_{f}$ gives rise to the following commutative diagram:
\begin{equation}\label{eq 1}
\begin{CD}
KX^{p}_{*}(W)@>{\mu}>> K_{*}(B^{p}(W))\\
@V{\cong}VV@VV{\cong}V\\
K_{*}(I_{0})@>>> K_{*}(I_{f}).
\end{CD}
\end{equation}
The $L^{p}$ coarse Baum-Connes conjecture is equivalent to the statement that the forgetful map $I_{0}\hookrightarrow I_{f}$ induces an isomorphism on $K$-theory.
\end{theorem}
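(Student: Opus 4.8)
The plan is to establish the three assertions of the statement in turn, using Theorems~\ref{th 3.18}, \ref{th 4.5} and \ref{th 4.7} together with the coarse Mayer--Vietoris sequence. For the isomorphism $K_{*}(I_{f})\cong K_{*}(B^{p}(W))$, recall that $I_{f}=\varinjlim_{i}B^{p}(X_{i})$, where each partial coarsening space $X_{i}=\pi^{-1}([1,i])$ carries the bounded coarse structure, on which the fusion structure restricts to it. I would first check that the collapsing map $\Phi_{i}\colon X_{i}\to\pi^{-1}\{i\}$ is a coarse equivalence with coarse inverse the inclusion $\iota_{i}\colon\pi^{-1}\{i\}\hookrightarrow X_{i}$: the map $\Phi_{i}$ is contractive, hence bornologous, it is proper because the bounded geometry of $W$ forces the connecting maps $\phi_{j}$ to have finite fibres, one has $\Phi_{i}\circ\iota_{i}=\mathrm{id}$, and $d\big((x,s),\iota_{i}\Phi_{i}(x,s)\big)\leq i-1$ is uniformly bounded since $i$ is fixed, so $\iota_{i}\circ\Phi_{i}$ is close to $\mathrm{id}_{X_{i}}$; Lemma~\ref{lemma A.5} is used here to see that the metric inherited by $\pi^{-1}\{i\}$ from $X$ is coarsely equivalent, for the bounded structure, to the uniform spherical metric on $N_{\mathcal{U}_{i}}$. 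Hence $K_{*}(B^{p}(X_{i}))\cong K_{*}(B^{p}(N_{\mathcal{U}_{i}}))$, compatibly with the structure maps of the direct system, so $K_{*}(I_{f})\cong\varinjlim_{i}K_{*}(B^{p}(N_{\mathcal{U}_{i}}))$. Since $Lebesgue(\mathcal{U}_{i})\to\infty$, the anti-\v{C}ech covers $\mathcal{U}_{i}$ are cofinal among the locally finite uniformly bounded open covers of $W$, so the theorem identifying $\varinjlim_{\mathcal{U}}K_{*}(B^{p}(N_{\mathcal{U}}))$ with $K_{*}(B^{p}(W))$ yields $K_{*}(I_{f})\cong K_{*}(B^{p}(W))$.

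For the square, observe that the forgetful inclusion $I_{0}\hookrightarrow I_{f}$ is, at the level of the defining direct systems, the family of forgetful maps $B^{p}(X_{i})_{0}\to B^{p}(X_{i})$ coarsening the $C_{0}$ structure to the bounded one, and these commute with the top-nerve inclusions $\pi^{-1}\{i\}\hookrightarrow X_{i}$ used for both structures (these inclusions induce the isomorphisms appearing above and in the proof of Theorem~\ref{th 4.7}). So the commutativity of \eqref{eq 1} reduces to the statement that, in the limit over $i$, the evaluation-at-zero maps $K_{*}(B^{p}_{L}(N_{\mathcal{U}_{i}}))\to K_{*}(B^{p}(N_{\mathcal{U}_{i}}))$ assembling to $\mu$ coincide with the composite of the $C_{0}$ assembly isomorphisms of Theorem~\ref{th 3.18} with the forgetful maps $K_{*}(B^{p}(N_{\mathcal{U}_{i}})_{0})\to K_{*}(B^{p}(N_{\mathcal{U}_{i}}))$. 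This in turn follows because the $L^{p}$ localization algebra $B^{p}_{L}(N_{\mathcal{U}_{i}})$ does not depend on the ambient bounded-versus-$C_{0}$ coarse structure, so that evaluation at zero into $B^{p}(N_{\mathcal{U}_{i}})$ factors through evaluation at zero into $B^{p}(N_{\mathcal{U}_{i}})_{0}$ followed by the forgetful map, together with the naturality of the Mayer--Vietoris boundary maps and of the Eilenberg-swindle trivialisations of Theorem~\ref{th 4.5} with respect to the forgetful map. Assembling these compatibilities over $i$ produces \eqref{eq 1} with $\mu$ on top.

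The last assertion is then immediate: in \eqref{eq 1} both vertical arrows are isomorphisms, so $\mu$ is an isomorphism if and only if the lower horizontal arrow is, and that arrow is by construction the homomorphism induced on $K$-theory by the forgetful inclusion $I_{0}\hookrightarrow I_{f}$. The genuinely delicate part of the argument is the middle step: one must follow the chain of identifications coming from Theorems~\ref{th 4.7}, \ref{th 4.5} and \ref{th 3.18} through the nested direct limits and verify that, at each stage, they intertwine the $C_{0}$ and bounded assembly maps. The hard part will be bookkeeping the naturality of the Mayer--Vietoris sequence and the Eilenberg swindle used to produce the isomorphism $K_{*}(B^{p}(\pi^{-1}\{i\})_{0})\cong K_{*}(B^{p}(X_{i})_{0})$ in Theorem~\ref{th 4.7}, and confirming that it corresponds under the forgetful map to the coarse-equivalence isomorphism $K_{*}(B^{p}(N_{\mathcal{U}_{i}}))\cong K_{*}(B^{p}(X_{i}))$ of the first step, so that the two vertical isomorphisms of \eqref{eq 1} are compatible with $I_{0}\hookrightarrow I_{f}$.
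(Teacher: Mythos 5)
There is a genuine gap in your first step, and it is exactly the point the paper's proof is designed to get around. You reduce $X_{i}$ to $\pi^{-1}\{i\}$ via $\Phi_{i}$ (that part is fine: $\Phi_{i}$ is contractive, $\Phi_{i}\circ\iota_{i}=\mathrm{id}$, and $d(x,\iota_{i}\Phi_{i}(x))\leq i-1$, which also gives properness directly --- your appeal to finite fibres of the $\phi_{j}$ is not the right reason). But you then assert, citing Lemma \ref{lemma A.5}, that the metric $\pi^{-1}\{i\}$ inherits from the coarsening space is coarsely equivalent, \emph{for the bounded structure}, to the uniform spherical metric on $N_{\mathcal{U}_{i}}$, and hence that $K_{*}(B^{p}(X_{i}))\cong K_{*}(B^{p}(N_{\mathcal{U}_{i}}))$. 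This is false in general. The inherited metric on $\pi^{-1}\{i\}$ admits shortcuts through the levels above $i$: two points of $N_{\mathcal{U}_{i}}$ whose images under the connecting maps become adjacent in $N_{\mathcal{U}_{i+k}}$ are at $X$-distance at most roughly $2k+\pi$, no matter how far apart they are in the spherical metric of $N_{\mathcal{U}_{i}}$. Concretely, take $W$ a disjoint union of ``hairpins'' (pairs of parallel rays of length $m\to\infty$ at a fixed separation $s$ exceeding $\Diam(\mathcal{U}_{i})$, joined at one end): opposite points on the two arms are at $W$-distance about $s$, hence become adjacent one or two levels up, so their $X$-distance at level $i$ is uniformly bounded, while their $N_{\mathcal{U}_{i}}$-distance grows like $m$. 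Lemma \ref{lemma A.5} is an intrinsic statement about a complex with its own uniform spherical metric and says nothing about the subspace metric coming from $X$; the $C_{0}$-version of your claim (used in Theorem \ref{th 4.7}) is true precisely because the two metrics agree at small scales, but the bounded-structure version is not. In fact $\pi^{-1}\{i\}$ with the inherited metric is coarsely equivalent to $W$ itself, and $W$ is in general \emph{not} coarsely equivalent to a single nerve $N_{\mathcal{U}_{i}}$ --- the paper already warns in Section 2.3 that the map $\psi_{\mathcal{U}}\colon W\to N_{\mathcal{U}}$ need not be coarse. Since your identification $K_{*}(I_{f})\cong\varinjlim_{i}K_{*}(B^{p}(N_{\mathcal{U}_{i}}))$ rests on this false level-$i$ isomorphism (and on its compatibility with the structure maps), the first paragraph does not establish $K_{*}(I_{f})\cong K_{*}(B^{p}(W))$.

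What the paper does instead is prove directly that $W\to X_{i}$ is a coarse equivalence: it takes $\psi\colon W\to\pi^{-1}\{i\}\hookrightarrow X_{i}$ and $\zeta=\eta\circ\Phi_{i}\colon X_{i}\to W$, checks the compositions are close to the identities, and proves bornologousness of both maps by climbing to level $i+j$ for arbitrarily large $j$ --- applying Lemma \ref{lemma A.5} in $N_{\mathcal{U}_{i+j}}$ and using that $Lebesgue(\mathcal{U}_{i+j})\to\infty$, with control functions involving $\Diam(\mathcal{U}_{i+j})$. This use of the whole tower above level $i$ is unavoidable and is exactly what your argument omits. Separately, you leave the commutativity of the square \eqref{eq 1} as ``bookkeeping''; in the paper it is obtained by exhibiting \eqref{eq 1} as the limit over $i$ of the level-wise commuting diagrams \eqref{eq 2}, which is a cleaner mechanism than tracking naturality of the Mayer--Vietoris and Eilenberg-swindle identifications one by one, but this is secondary to the gap above. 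Your final paragraph (deducing the equivalence of the conjecture with the lower arrow being an isomorphism once both vertical maps are isomorphisms) is fine.
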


\begin{proof}
For each $i$, we construct a coarse map $\psi: W\rightarrow X_{i}$ with the composition $N_{\mathcal{U}_{i}}\rightarrow W_{i}\rightarrow X_{i}$ close to the canonical inclusion $N_{\mathcal{U}_{i}}\hookrightarrow X_{i}$. The diagram (\ref{eq 1}) is the limit of the following diagrams
\begin{equation}\label{eq 2}
 \xymatrix{
  K_{*}(B^{p}_{L}(N_{\mathcal{U}_{i}}))\ar[r]\ar[d] & K_{*}(B^{p}(N_{\mathcal{U}_{i}}))\ar[r]&  K_{*}(B^{p}(W)) \ar[d]\\
  K_{*}(B^{p}(X_{i})_{0}) \ar[rr]& & K_{*}(B^{p}(X_{i})),}
\end{equation}
which commutes for each $i$. Thanks to Theorem \ref{th 4.7}, we see that $KX^{p}_{*}(W)\cong K_{*}(I_{0})$. It thus suffices to prove that $K_{*}(B^{p}(W))\rightarrow K_{*}(B^{p}(X_{i}))$ is an isomorphism for every $i$. The identification will be proved by showing that the coarse maps $\psi: W\rightarrow X_{i}$ are coarse equivalences. 

First, we construct a map $\zeta$ such that $\psi\circ\zeta$ and $\zeta\circ\psi$ are close to the identities. Let $\psi: W\rightarrow\pi^{-1}\{i\}\hookrightarrow X_{i}$ be any map which maps $w\in W$ to a vertex $[V]$ of $\pi^{-1}\{i\}$ with $w\in V$.  Let $\eta: N_{\mathcal{U}_{i}}\rightarrow W$ be any map taking $x$ in the star of a vertex $[V]$ to $\eta(x)$ in $V$. It is easy to verify that $\psi\circ\eta$ is close to the canonical inclusion $N_{\mathcal{U}_{i}}\hookrightarrow X_{i}$. Define
$$
\zeta: X_{i}\xrightarrow{\Phi_{i}}N_{\mathcal{U}_{i}}\xrightarrow{\eta}W, \zeta=\eta\circ\Phi_{i}.
$$
Indeed, for $x\in X_{i}$, $d(\psi\circ\zeta(x),x)=d(\psi\circ\eta\circ\Phi_{i}(x),x)\leq\pi+i$, and for $w\in W$, $d(\zeta\circ\psi(w),w)\leq \Diam(\mathcal{U}_{i})<\infty$.
Thus $\psi\circ\zeta$ and $\zeta\circ\psi$ are close to the identity on $X_{i}$ and $W$ respectively.
 
Next, we show that $\zeta$ is a coarse map. For $x, x'\in X_{i}$ and $d(x,x')<2j$, there exists a path between $\Phi_{i+j}(x)$ and $\Phi_{i+j}(x')$ with length at most $2j$ in $N_{\mathcal{U}_{i+j}}$. According to Lemma \ref{lemma A.5}, there exists a sequence of open sets $V_{0}, V_{1}, \cdots, V_{k}$ in $\mathcal{U}_{i+j}$ with $V_{i}\cap V_{i+1}\neq\varnothing$ for $i=0,\cdots, k-1$, $\zeta(x)\in V_{0}$, $\zeta(x')\in V_{k}$ and $k\leq\frac{4j}{\pi}+2.$  Therefore, when $d(x,x')<2j$, we have 
$$d(\zeta(x),\zeta(x'))\leq d(V_{0},V_{k})\leq (k+1)\Diam(\mathcal{U}_{i+j})\leq(\frac{4j}{\pi}+3)\Diam(\mathcal{U}_{i+j}).$$ Certainly, $\zeta$ is proper, so $\zeta$ is coarse. 

Lastly, we prove that $\psi$ is also a coarse map. For $w,w'\in W$ and $d(w,w')<R$, we put $j\geq 0$ such that $Lebesgue(\mathcal{U}_{i+j})\geq R$. Then there exists a vertex $[V]\in N_{\mathcal{U}_{i+j}}$ with $w,w'\in W$ such that $\Phi_{i+j}(\psi(w))$ and $\Phi_{i+j}(\psi(w'))$ are vertices adjacent to $[V]$ in $N_{\mathcal{U}_{i+j}}$, thus $d(\Phi_{i+j}(\psi(w)), \Phi_{i+j}(\psi(w')))\leq\pi$. Since $\psi(w), \psi(w')$ are vertices of $\pi^{-1}\{i\}$, $d(\psi(w), \Phi_{i+j}(\psi(w)))\leq j$ and $d(\psi(w'), \Phi_{i+j}(\psi(w')))\leq j$, which means $d(\psi(w),\psi(w'))\leq 2j+\pi$. Obviously, $\psi$ is proper, thus it is coarse. In conclusion, $\psi$ and $\zeta$ are coarse equivalences.
Hence, $K_{*}(B^{p}(W))\cong  K_{*}(B^{p}(X_{i}))$ in diagram (\ref{eq 2}), passing to the limit, we obtain the isomorphism $K_{*}(B^{p}(W))\cong  K_{*}(I_{f})$.
\end{proof}

\begin{remark}
According to Theorem \ref{th 4.12}, we can establish the $L^{p}$ coarse Baum-Connes conjecture by showing that the forgetful map $I_{0}\hookrightarrow I_{f}$ induces an isomorphism on $K$-theory. This method presents a significant advantage within the framework of coarse geometry. Indeed, the ideals $I_{0}$ and $I_{f}$ belong to $L^{p}$ Roe algebras of the $C_{0}$ and fusion coarse structures on the total coarsening space $X$. These ideal structures receive robust support from coarse geometric tools, significantly enhancing the effectiveness in proving the conjecture.
\end{remark}
\subsection{The obstruction group for the $L^{p}$ coarse Baum-Connes conjecture}
In this subsection, we conclude from the above discussion that the obstruction group for the $L^{p}$ coarse Baum-Connes conjecture is the $K$-theory of the $L^{p}$ Roe algebra of the total coarsening space equipped with a fusion coarse structure.
\begin{theorem}\label{th 4.8}
Let $W$ be a uniformly discrete bounded geometry metric space, and let $\mathcal{U}_{*}$ be an anti-\v{C}ech sequence for $W$. Let $X=X(W,\mathcal{U}_{*})$ be the corresponding total coarsening space, and let $X_{f}=X(W,\mathcal{U}_{*})_{f}$ be the fusion coarse structure on $X$. Then the $L^{p}$ coarse Baum-Connes conjecture holds for $W$ if and only if $K_{*}(B^{p}(X_{f}))=0$.
\end{theorem}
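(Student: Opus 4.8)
The plan is to play off the two ideal extensions
$$0 \to I_0 \to B^p(X_0) \to B^p(X_0)/I_0 \to 0, \qquad 0 \to I_f \to B^p(X_f) \to B^p(X_f)/I_f \to 0$$
against each other, using Theorem \ref{th 4.5} ($K_*(B^p(X_0))=0$) and Theorem \ref{th 4.12} (the $L^p$ coarse Baum-Connes conjecture for $W$ holds if and only if the forgetful map $\iota\colon I_0\hookrightarrow I_f$ induces an isomorphism on $K$-theory). Each extension is a short exact sequence of $L^p$ operator algebras and so gives a cyclic six-term exact sequence in $K$-theory. The identity map $X_0\to X_f$ is coarse (a $C_0$-controlled set is fusion-controlled), so $B^p(X_0)\subseteq B^p(X_f)$, and this inclusion carries $I_0$ into $I_f$; hence we obtain a morphism of extensions and a commutative ladder of six-term sequences whose left and middle columns are $\iota_*$ and the inclusion-induced map $B^p(X_0)\to B^p(X_f)$.

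The technical core is the claim that the induced map on quotients $\bar\phi\colon B^p(X_0)/I_0\to B^p(X_f)/I_f$ is an \emph{isomorphism}. Injectivity is the statement $B^p(X_0)\cap I_f=I_0$: if $T$ is $C_0$-controlled and lies in $I_f=\overline{\bigcup_i B^p(X_i)}$, then $T$ is a limit of operators supported on partial coarsening spaces $X_{i_n}$, so $\chi_{X_{i_n}}T\chi_{X_{i_n}}\to T$ with each $\chi_{X_{i_n}}T\chi_{X_{i_n}}\in B^p(X_{i_n})_0$, whence $T\in I_0$. For surjectivity one first decomposes a finite-propagation, locally compact, fusion-controlled operator: if $\prop(T)\le N$ and $\supp(T)\setminus(X_i\times X_i)$ is $C_0$, then, using that $\pi\colon X\to[1,\infty)$ is $1$-Lipschitz, the cut-down $PTP$ with $P=\chi_{\pi^{-1}([1,i+N])}$ lies in $B^p(X_{i+N})\subseteq I_f$, while $T-PTP$ has support inside $\supp(T)\setminus(X_i\times X_i)$ and is therefore $C_0$-controlled, so $[T]=[T-PTP]$ lies in the image of $B^p(X_0)$. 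The image of $\bar\phi$ is thus dense; the remaining and genuinely $L^p$-specific point---an $L^p$ operator algebra homomorphism need not have closed range---is that $B^p(X_0)+I_f$ is closed in $B^p(X_f)$, which I expect to be the main obstacle. This should be proved by a Cauchy-sequence argument in the spirit of the proof of Theorem \ref{th 3.2}: given a Cauchy sequence in $B^p(X_0)+I_f$ one truncates each term so that the $B^p(X_0)$-parts and the $I_f$-parts are separately Cauchy, and passes to limits.

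Granting that $\bar\phi$ is an isomorphism, the conclusion follows by a diagram chase. By Theorem \ref{th 4.5} the boundary map $\partial_0\colon K_*(B^p(X_0)/I_0)\to K_{*-1}(I_0)$ of the first sequence is an isomorphism (both terms adjacent to it in the six-term sequence vanish). Commutativity of the ladder on the quotient column gives $\partial_f\circ\bar\phi_*=\iota_*\circ\partial_0$, so $\partial_f$ is an isomorphism if and only if $\iota_*\colon K_*(I_0)\to K_*(I_f)$ is. Finally, reading off the second six-term sequence, $\partial_f$ is an isomorphism in both degrees if and only if $K_*(B^p(X_f))=0$: if $K_*(B^p(X_f))=0$ then the two maps flanking it are zero, which by exactness makes $\partial_f$ injective and surjective; conversely, if $\partial_f$ is an isomorphism then the maps flanking $K_*(B^p(X_f))$ vanish, and exactness at $K_*(B^p(X_f))$ forces $K_*(B^p(X_f))=0$. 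Combining these, the $L^p$ coarse Baum-Connes conjecture holds for $W$ if and only if $\iota_*$ is an isomorphism, if and only if $\partial_f$ is an isomorphism, if and only if $K_*(B^p(X_f))=0$, which is the assertion.
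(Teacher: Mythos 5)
Your proposal is correct and follows essentially the same route as the paper's proof: you establish $I_{0}=B^{p}(X_{0})\cap I_{f}$ and $B^{p}(X_{0})+I_{f}=B^{p}(X_{f})$ (with the closedness handled by the same truncation/Cauchy-sequence argument the paper uses), hence $B^{p}(X_{0})/I_{0}\cong B^{p}(X_{f})/I_{f}$, and then chase the two six-term sequences using $K_{*}(B^{p}(X_{0}))=0$ together with Theorems \ref{th 4.7} and \ref{th 4.12}, exactly as in the paper's commutative diagram. The only differences are presentational (your explicit decomposition $T=PTP+(T-PTP)$ matches the paper's $T=T^{(1)}+T^{(2)}$), so no changes are needed.
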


\begin{proof}
The outline of the proof is as follows. Firstly, we verify that $I_{0}=B^{p}(X_{0})\cap I_{f}$. Clearly, $I_{0}\subseteq B^{p}(X_{0})\cap I_{f}$. On the other hand, for any $T\in B^{p}(X_{0})\cap I_{f}$, $T=\varinjlim\limits_{i}T_{i}$, where $T_{i}$ is the truncation of $T$ to $X_{i}\times X_{i}$. Then $T_{i}\in B^{p}((X_{i})_{0})$, so $T\in I_{0}$. Therefore, we have $I_{0}=B^{p}(X_{0})\cap I_{f}$. 

Next, we show that $B^{p}(X_{0})+I_{f}=B^{p}(X_{f})$. It is straightforward to check that $B^{p}(X_{0})$ and $I_{f}$ are closed ideals of $B^{p}(X_{f})$, so $B^{p}(X_{0})+I_{f}$ is an ideal of $B^{p}(X_{f})$. Conversely, for any $T\in B^{p}(X_{f})$, we write $T=T^{(1)}+T^{(2)}$, where $\supp(T^{(1)})\subseteq X_{i}\times X_{i}$ for sufficiently large $i$, and $\supp(T^{(2)})$ is $C_{0}$ controlled. Thus $T^{(1)}\in B^{p}(X_{i})\subseteq I_{f}$, $T^{(2)}\in B^{p}(X_{0})$, so $B^{p}(X_{0})+I_{f}$ is dense in $B^{p}(X_{f})$. It can be verified that $B^{p}(X_{0})+I_{f}$ is closed. Indeed, let $\{T^{(1)}_{n}+T^{(2)}_{n}\}$ be a Cauchy sequence in $B^{p}(X_{0})+I_{f}$, and put $T_{n}\triangleq T^{(1)}_{n}+T^{(2)}_{n}$, then there exists an operator $T\in B^{p}(X_{f})$ such that $T_{n}\xrightarrow{\Vert\cdot\Vert}T$. As $\Vert T^{(1)}_{n}\Vert\leq\Vert T_{n}\Vert$, we conclude that $\{T^{(1)}_{n}\}$ is a Cauchy sequence. Since $T^{(2)}_{n}=T_{n}-T^{(1)}_{n}$, we can deduce that $\{T^{(2)}_{n}\}$ is also a Cauchy sequence. Therefore, there exist $T^{(1)}\in I_{f}$ and $T^{(2)}\in B^{p}(X_{0})$ such that $T^{(1)}_{n}\xrightarrow{\Vert\cdot\Vert}T^{(1)}$ and $T^{(2)}_{n}\xrightarrow{\Vert\cdot\Vert}T^{(2)}$, namely, $T=T^{(1)}+T^{(2)}\in B^{p}(X_{0})+I_{f}$. It follows that $B^{p}(X_{0})+I_{f}=B^{p}(X_{f})$ as required. Hence, we obtain the isomorphism
$$
\frac{B^{p}(X_{0})}{I_{0}}=\frac{B^{p}(X_{0})}{B^{p}(X_{0})\cap I_{f}}\cong \frac{B^{p}(X_{0})+I_{f}}{I_{f}}=\frac{B^{p}(X_{f})}{I_{f}}.
$$

Eventually, for any bounded geometry space $W$, we have the following commutative diagram:
\begin{equation}\label{eq 3}
\begin{CD}
K_{*+1}(B^{p}(X_{0})/I_{0})@>{\cong}>>K_{*+1}(B^{p}(X_{f})/I_{f})
\\@VV{\cong}V@VVV\\
K_{*}(I_{0})@>>>K_{*}(I_{f})
\\@AA{\cong}A@AA{\cong}A\\
KX^{p}_{*}(W)@>{\mu}>> K_{*}(B^{p}(W)).
\end{CD}
\end{equation}
Thanks to Theorem \ref{th 4.7} and Theorem \ref{th 4.12}, we see that $KX^{p}_{*}(W)\cong K_{*}(I_{0})$ and $K_{*}(I_{f})\cong K_{*}(B^{p}(W))$.
By Theorem \ref{th 4.5}, we know that $K_{*}(B^{p}(X_{0}))=0$, thus $K_{*+1}(B^{p}(X_{0})/I_{0})\cong K_{*}(I_{0})$. Therefore, from diagram (\ref{eq 3}), we deduce that $\mu$ is an isomorphism if and only if $K_{*+1}(B^{p}(X_{f})/I_{f})\cong K_{*}(I_{f})$. Since $K_{*+1}(B^{p}(X_{f})/I_{f})\cong K_{*}(I_{f})$ if and only if $K_{*}(B^{p}(X_{f}))=0$, we get that $\mu$ is an isomorphism if and only if 
$K_{*}(B^{p}(X_{f}))=0$.
\end{proof}

According to Theorem \ref{th 4.8}, we can define the obstruction group for the $L^{p}$ coarse Baum-Connes conjecture as follows.
\begin{definition}[The obstruction group for the $L^{p}$ coarse Baum-Connes conjecture]
\quad Let $W$ be a bounded geometry metric space, and let $\mathcal{U}_{*}$ be an anti-\v{C}ech sequence for $W$. Let $X=X(W,\mathcal{U}_{*})$ be the corresponding total coarsening space, and let $X_{f}=X(W,\mathcal{U}_{*})_{f}$ be the fusion coarse structure on $X$. The obstruction group for the $L^{p}$ coarse Baum-Connes conjecture is the $K$-theory group $K_{*}(B^{p}(X_{f}))$.
\end{definition}

\section{An application to finite asymptotic dimension}
The notion of asymptotic dimension due to M. Gromov is a large-scale dimension of a metric space, and also a large-scale geometric invariant \cite{Gro93}. In this section, our purpose is to show that the $L^{p}$ coarse Baum-Connes conjecture holds for bounded geometry metric spaces with finite asymptotic dimension. Based on the results of the last section, we only need to prove that the obstruction groups vanish. Indeed, we can prove this conclusion. However, it is a bit difficult to calculate the obstructions directly, so we have to resort to a new coarse structure, called the hybrid coarse structure. This coarse structure defined in \cite{Wri05} is a structure between the fusion and bounded coarse structures, and is coarser and more computable than the fusion structure. Furthermore, we will show that the $K$-theory of the $L^{p}$ Roe algebra of the total coarsening space equipped with the hybrid structure is zero in such spaces, and then prove that the $K$-theory groups of $L^{p}$ Roe algebras of the fusion and hybrid coarse structures are isomorphic. Eventually, we obtain that the obstruction groups arising from the fusion structure vanish in this case.

\subsection{Asymptotic dimension}
In this subsection, we review the definition of the asymptotic dimension of a metric space. To introduce it, we need the concept of $R$-degree of an open cover for $R>0$. More formally, let $\mathcal{U}$ be an open cover of a metric space $W$. Given $R>0$, the $R$-degree of $\mathcal{U}$ is given by
$$
R\text{-}degree(\mathcal{U})=\sup\limits_{w\in W}{\text cardinality}\{U\in\mathcal{U}\mid d(w, U)<R\}.
$$
\begin{definition}\cite{Gro93}\label{def 5.2}
A metric space $W$ has asymptotic dimension at most $m$ if for all $R>0$, there exists an open cover $\mathcal{U}$ of $W$ with $\Diam(\mathcal{U})<\infty$ and with $R$-degree of $\mathcal{U}$ at most $m+1$. The asymptotic dimension of $W$ is the smallest $m$ such that $W$ has an asymptotic dimension at most $m$. It is denoted by $\asdim W=m$.
\end{definition}

We now look at some typical examples.
\begin{example}\cite{NY12}
$\asdim \mathbb{Z}^{n}=n$, $n=1,2,\cdots$.
\end{example}

\begin{example}\cite{NY12}
Let $T$ be a tree. Then $\asdim T \leq 1$.
\end{example}

\begin{example}\cite{NY12}
The asymptotic dimension of the free group $\mathbb{F}_{n}$ equals 1.
\end{example}

\begin{remark}
We say that a metric space $W$ has finite asymptotic dimension if there exists an $m\in\mathbb{N}$ such that $\asdim W\leq m$. There are plenty of metric spaces and groups with finite asymptotic dimension, such as Gromov's hyperbolic groups as metric spaces with word metric \cite{Gro93, Roe05},  Coxeter groups \cite{DJ99}, almost connected Lie groups \cite{NY12}, certain relatively hyperbolic groups \cite{Osi05}, finite-dimensional CAT(0)-cube complexes \cite{Wri12}, mapping class groups \cite{BBF15}, and all hierarchically hyperbolic groups \cite{BHS17}, and finitely generated one-relator groups \cite{Tse23}.
\end{remark}

\begin{remark}
Recall that the degree of an open cover $\mathcal{U}$ of a metric space $X$, denoted $Degree(\mathcal{U})$, is the supremum over points $x\in X$ of the number of elements $U$ of $\mathcal{U}$ containing $x$.
From Definition \ref{def 5.2}, we see that for a uniformly discrete metric space $W$ with finite asymptotic dimension at most $m$, there exists an anti-\v{C}ech sequence $\mathcal{U}_{*}$ for $W$ with $Degree(\mathcal{U}_{i})\leq m+1$ for all $i\in\mathbb{N}$.
\end{remark}

\subsection{The $L^{p}$ coarse Baum-Connes conjecture for spaces with finite asymptotic dimension} In this subsection, our destination is to give a new proof of the $L^{p}$ coarse Baum-Connes conjecture for bounded geometry metric spaces with finite asymptotic dimension using the conclusions of the previous sections.

We first review the notion of the hybrid coarse structure. This key definition that bridges the fusion coarse structure with the bounded coarse structure is as follows. 
\begin{definition}[The hybrid coarse structure]\cite{Wri05}
Let $X=X(W,\mathcal{U}_{*})$ be the total coarsening space, and let $X_{i}=X_{i}(W,\mathcal{U}_{*})$ be the partial coarsening spaces. The hybrid coarse structure on $X$, denoted $X_{h}=X(W,\mathcal{U}_{*})_{h}$, is the coarse structure for which a set $A\subseteq X\times X$ is controlled if and only if
\begin{itemize}
\item $d|_{A}$ is bounded, i.e. $A$ is controlled for the bounded coarse structure, and
\item $\sup\{d(x,y)\mid (x,y)\in A\backslash(X_{i}\times X_{i})\}\rightarrow 0$ as $i$ tends to infinity.
\end{itemize}
\end{definition}

\begin{remark}
The maps $X_{0}\rightarrow X_{f}\rightarrow X_{h}\rightarrow X$ are coarse. In other words, the fusion and hybrid coarse structures are coarse transitions from the $C_{0}$ structure to the bounded structure.
\end{remark}

The next fact is a key step in the proof of the subsequent results.
\begin{remark}
For a uniformly discrete bounded geometry metric space $X$ and a cover $\mathcal{U}$ of $X$ with $\Diam(\mathcal{U})<\infty$, we have $Degree(\mathcal{U})<\infty$ and $N_{\mathcal{U}}$ is finite dimensional with $Degree(\mathcal{U})=\dim(N_{\mathcal{U}})+1$.
\end{remark}
The following technical proposition constructs a Lipschitz map which is linearly homotopic to a collapsing map.
\begin{proposition}\label{prop 5.8}\cite{Wri05}
Let $W$ be a uniformly discrete bounded geometry metric space with asymptotic dimension at most $m$, and let $\mathcal{U}_{*}$ be an anti-\v{C}ech sequence for $W$ with degrees bounded by $m+1$. Let $X=X(W,\mathcal{U}_{*})$ be the total coarsening space of $(W,\mathcal{U}_{*})$, let $d$ be the metric on $X$, and let $\pi$ be the quotient map from $X$ to $[1,\infty)$. Then for each $i_1$ and each $R$, $\varepsilon>0$, there exist an $i_{2}>i_{1}$ and a map $\beta: \pi^{-1}[i_1,\infty)\rightarrow \pi^{-1}[i_2,\infty)$ such that
\begin{itemize}
\item $d(\beta(x),\beta(x'))\leq\varepsilon d(x,x')$ for $x, x'\in N_{\mathcal{U}_{i_1}}$ with $d(x,x')<R$;
\item $\beta$ is Lipschitz with constant 4;
\item $\beta(x)=x$ for $x\in X$ with $\pi(x)\geq i_2$;
\item if $x\in X$ with $i_1\leq\pi(x)\leq i_2$, then $\beta(x)\in N_{\mathcal{U}_{i_2}}$ and there exists a simplex $\sigma$ of $N_{\mathcal{U}_{i_2}}$ containing both $\Phi_{i_2}(x)$ and $\beta(x)$, hence 
$\Phi_{i_2}$ is linearly homotopic to $\beta$ as a map from $\pi^{-1}[i_1,\infty)\rightarrow\pi^{-1}[i_2,\infty)$.
\end{itemize}
\end{proposition}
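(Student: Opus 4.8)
The plan is to assemble $\beta$ from three pieces: on $\pi^{-1}[i_2,\infty)$ it is forced to be the identity, on the bottom fibre $\pi^{-1}\{i_1\}=N_{\mathcal{U}_{i_1}}$ it is a strongly contracting ``flattening'' map $h$ into $N_{\mathcal{U}_{i_2}}$, and on the intermediate cylinder $\pi^{-1}[i_1,i_2]$ it interpolates, along the $\pi$-coordinate, between $h$ and the collapsing map $\Phi_{i_2}$. Since the statement concerns only the metric geometry of the coarsening space $X=X(W,\mathcal{U}_*)$ — no $L^p$ operator algebra or $K$-theory enters — it is exactly the geometric input of \cite{Wri05}, and I would reproduce Wright's construction. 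First I would fix $i_2>i_1$ from the anti-\v{C}ech inequalities and $\lim_i Lebesgue(\mathcal{U}_i)=\infty$: take $i_2$ so large that $Lebesgue(\mathcal{U}_{i_2})$ exceeds a fixed multiple of $\Diam(\mathcal{U}_{i_1})$, the multiple depending only on $m$, $R$ and $\varepsilon$. This is the only free parameter, and everything that follows amounts to checking that it can be chosen large enough.

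The contraction comes from a Lipschitz partition of unity subordinate to $\mathcal{U}_{i_2}$ but with \emph{shrunken supports}. Put $\delta$ of order $m\,\Diam(\mathcal{U}_{i_1})$ and set $\rho_U(w)=\max\!\big(0,\,d(w,W\setminus U)-\delta\big)\big/\sum_{U'}\max\!\big(0,\,d(w,W\setminus U')-\delta\big)$; this is a partition of unity as soon as $Lebesgue(\mathcal{U}_{i_2})>2\delta$, since then every point lies in the $\delta$-core of some member. The numerators are $1$-Lipschitz, the denominator is bounded below by a fixed fraction of $Lebesgue(\mathcal{U}_{i_2})$, and — here the hypotheses $\asdim W\le m$ and bounded geometry are essential — one may take $Degree(\mathcal{U}_{i_2})\le m+1$, so at most $m+1$ terms are ever nonzero and each $\rho_U$ is Lipschitz with constant of order $(m+1)/Lebesgue(\mathcal{U}_{i_2})$. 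Transporting these weights to $N_{\mathcal{U}_{i_1}}$ (evaluate at a chosen basepoint $w_V\in V$ for each vertex $[V]$, then extend affinely over simplices), taking barycentric combinations, and projecting radially onto the spherical simplices of $N_{\mathcal{U}_{i_2}}$ defines $h$. Because the basepoints of the vertices of any simplex of $N_{\mathcal{U}_{i_1}}$ lie within $\delta$ of one another, the shrunken supports force the members of $\mathcal{U}_{i_2}$ appearing in any one barycentric combination to intersect pairwise, so $h(x)$ genuinely lies in a simplex of $N_{\mathcal{U}_{i_2}}$; arranging the composite $\phi_{i_2-1}\circ\cdots\circ\phi_{i_1}$ to send each small set deep into a member of $\mathcal{U}_{i_2}$ (again affordable by the anti-\v{C}ech gap) makes that simplex also contain $\Phi_{i_2}(x)$, which gives the linear homotopy of the fourth bullet. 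A direct estimate using Lemma~\ref{lemma A.5} — there are no shortcuts through simplex interiors, so metric distances are controlled by combinatorial edge-path lengths — then shows $h$ is $\varepsilon$-Lipschitz at scale $R$ (first bullet) and that, after the cylinder interpolation, $\beta$ is globally Lipschitz with constant $4$ (second bullet); $\beta$ is the identity above level $i_2$ by construction.

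The hard part will be the simultaneous bookkeeping. Forcing $h$ to contract as strongly as demanded forces $Lebesgue(\mathcal{U}_{i_2})$ to be enormous, and one must then still check that (i) the shrunken-support partition of unity retains uniformly bounded multiplicity, (ii) the carrier of $\beta(x)$ stays inside the simplex carrying $\Phi_{i_2}(x)$ at every point, so that the interpolation is well defined, and (iii) no segment of the cylinder interpolation inflates the global Lipschitz constant past $4$ — the $4$ being the combined cost of the bottom flattening, the radial-projection distortion, and the vertical travel. All three are controlled precisely because $Degree(\mathcal{U}_{i_2})\le m+1$ and $W$ has bounded geometry, and a single sufficiently large choice of $i_2$ accommodates all of them; the remaining estimates are routine and run exactly as in \cite{Wri05}.
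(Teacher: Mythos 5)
You should first note what the comparison baseline actually is: the paper does not prove Proposition \ref{prop 5.8} at all — it is quoted verbatim from \cite{Wri05} and used as a black box in the proof of the vanishing theorem for the hybrid structure — so the only argument to measure your sketch against is Wright's original one, which you explicitly set out to reproduce. Your architecture is indeed that argument: choose $i_2$ so that $Lebesgue(\mathcal{U}_{i_2})$ is enormous compared with $m$, $R$, $\varepsilon$ and $\Diam(\mathcal{U}_{i_1})$; build a strongly contracting map into $N_{\mathcal{U}_{i_2}}$ from a Lipschitz partition of unity whose multiplicity is at most $m+1$ (this is where $\asdim W\le m$ and bounded geometry enter); and match it to the identity above level $i_2$ through points lying in the simplex carrying $\Phi_{i_2}(x)$, which yields the linear homotopy of the last bullet. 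The estimate that the weight functions have Lipschitz constant of order $(m+1)/Lebesgue(\mathcal{U}_{i_2})$, combined with Lemma \ref{lemma A.5} to pass between metric and combinatorial distances, does give the first bullet.

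There are, however, two concrete soft spots. First, as written $\beta$ is not defined on the region $i_1<\pi(x)<i_2$: your $h$ lives only on the bottom fibre $\pi^{-1}\{i_1\}$, there is no downward projection in the coarsening space, and so ``interpolate along the $\pi$-coordinate between $h$ and $\Phi_{i_2}$'' does not yet specify a map. This is not cosmetic, because the second and fourth bullets are statements about exactly that region and the global constant $4$ is won or lost there. A repair needs level-$j$ analogues $h_j:N_{\mathcal{U}_j}\to N_{\mathcal{U}_{i_2}}$ for all $i_1\le j<i_2$ (or a $\pi$-dependent cut-off of the weights), and one must then face the fact that for $j$ close to $i_2$ the shrunken-support construction degenerates, since the anti-\v{C}ech property gives only $\Diam(\mathcal{U}_j)\le Lebesgue(\mathcal{U}_{j+1})$ with no margin; keeping the transition from the strongly contracting bottom map to the identity at level $i_2$ uniformly Lipschitz is precisely the bookkeeping that constitutes Wright's proof, and it is the part your sketch delegates wholesale to \cite{Wri05}. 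Second, a smaller but genuine slip: vertices $[U_0],\dots,[U_k]$ span a simplex of the nerve only when $U_0\cap\cdots\cap U_k\neq\varnothing$, not when the sets merely intersect pairwise. The fix is to run your shrunken-support argument through a point $w^*\in V_0\cap\cdots\cap V_k$ of the carrier of $x$ in $N_{\mathcal{U}_{i_1}}$: with $\delta\ge\Diam(\mathcal{U}_{i_1})$, every member of $\mathcal{U}_{i_2}$ receiving positive weight at some basepoint $w_{V_j}$, as well as every $U_{V_j}\supseteq V_j$ appearing in $\Phi_{i_2}(x)$, contains $w^*$, so the full intersection is nonempty and the common simplex required by the fourth bullet exists. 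With these two points addressed, your outline is a faithful reconstruction of the cited proof rather than an alternative to it.
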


With the help of the preceding proposition, we can now prove that under the assumption of finite asymptotic dimension, the $L^{p}$ Roe algebra of the total coarsening space with the hybrid coarse structure has trivial $K$-theory.

\begin{theorem}
Let $W$ be a uniformly discrete bounded geometry metric space with asymptotic dimension at most $m$, let $\mathcal{U}_{*}$ be an anti-\v{C}ech sequence for $W$ with degrees bounded by $m+1$, and let $X$ be the associated coarsening space. Then the $K$-theory groups $K_{*}(B^{p}(X_{h}))=0$.
\end{theorem}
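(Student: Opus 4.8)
The plan is to prove $K_{*}(B^{p}(X_{h}))=0$ by an Eilenberg swindle, in the same spirit as the proof of Theorem \ref{th 4.5}, but adapted to the finer hybrid coarse structure. Thus I want a sequence $\{\alpha_{k}\}$ of coarse maps on $X_{h}$ with $\alpha_{0}=\mathrm{id}$ that is properly supported, uniformly controlled, and has uniformly close steps, so that Lemma \ref{lemma 3.11} applies. The proof of Theorem \ref{th 4.5} does not transfer verbatim because the hybrid structure requires a controlled set to have propagation tending to $0$ away from the partial coarsening spaces $X_{i}$: the collapsing maps $x\mapsto\Phi_{r_{k}(d(x,x_{0}))}(x)$ used there push points near the basepoint up to level $\sim\log k$ while keeping their transverse distances bounded below, so their images fail to be hybrid-controlled. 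The remedy, following the $C^{*}$-algebra argument in \cite{Wri05}, is to replace the bare collapsing flow by a flow that also contracts transverse distances, built from the Lipschitz maps of Proposition \ref{prop 5.8}. As a set-up: since $\mathrm{asdim}\,W\le m$, by the remark preceding Proposition \ref{prop 5.8} we may fix an anti-\v{C}ech sequence $\mathcal{U}_{*}$ with $Degree(\mathcal{U}_{i})\le m+1$ for all $i$, so the nerves $N_{\mathcal{U}_{i}}$ are uniformly finite dimensional and Proposition \ref{prop 5.8} is available at every level; fix also a basepoint $x_{0}\in X$.

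Iterating Proposition \ref{prop 5.8} with radii $R_{n}\uparrow\infty$ and contraction factors $\varepsilon_{n}\downarrow 0$ produces levels $1=j_{0}<j_{1}<j_{2}<\cdots$ and $4$-Lipschitz maps $\beta_{n}\colon\pi^{-1}[j_{n-1},\infty)\to\pi^{-1}[j_{n},\infty)$ that equal the identity on $\pi^{-1}[j_{n},\infty)$, are linearly homotopic to $\Phi_{j_{n}}$ through maps keeping each point inside a single simplex of $N_{\mathcal{U}_{j_{n}}}$, and satisfy $d(\beta_{n}(x),\beta_{n}(x'))\le\varepsilon_{n}d(x,x')$ for $x,x'\in N_{\mathcal{U}_{j_{n-1}}}$ with $d(x,x')<R_{n}$. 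Splicing the compositions $\beta_{n}\circ\cdots\circ\beta_{1}$ together with the linear homotopies to the $\Phi$'s yields a modified collapsing flow $\{\Phi^{\beta}_{t}\}_{t\ge 1}$ on $X$ which is continuous, is approximately $1$-Lipschitz in $t$, agrees with $\Phi_{1}=\mathrm{id}$ at $t=1$ and with $\beta_{n}\circ\cdots\circ\beta_{1}$ at $t=j_{n}$, is linearly homotopic to the original $\Phi_{t}$ for every $t$, and whose transverse contraction on the low-level slices improves without bound as $t\to\infty$. With $r_{k}(s)=\max\{\log k-s,1\}$ and $r_{0}\equiv 1$, set
$$\alpha_{k}(x)=\Phi^{\beta}_{r_{k}(d(x,x_{0}))}(x),\qquad k\ge 0 .$$

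It remains to verify the three hypotheses of Lemma \ref{lemma 3.11} for $X_{h}$. That $\alpha_{0}=\mathrm{id}$ is clear. Proper support and uniformly close steps are checked as in Theorem \ref{th 4.5}: points with $d(x,x_{0})\gg\log k$ are fixed and all others are sent to level $\le\log k$ and, by an estimate as in the proof of Theorem \ref{th 4.5}, to distance $\ge\tfrac12\log k-1$ from $x_{0}$, so $\mathrm{Range}(\alpha_{k})$ eventually misses any bounded set; and since $r_{k+1}-r_{k}\le\log(1+1/k)$ and $t\mapsto\Phi^{\beta}_{t}(x)$ is approximately $1$-Lipschitz, $d(\alpha_{k}(x),\alpha_{k+1}(x))\le C$ uniformly, with this distance exceeding a given $\delta$ only when both points lie in a fixed $X_{i_{\delta}}$, so the set of pairs $(\alpha_{k}(x),\alpha_{k+1}(x))$ is hybrid-controlled. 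The crux is uniform control for the hybrid structure: given a hybrid-controlled $A$ one must exhibit a hybrid-controlled $B_{A}$ containing every $(\alpha_{k}(x),\alpha_{k}(x'))$. Boundedness of $B_{A}$ follows from the $\alpha_{k}$ being uniformly Lipschitz (each $\beta_{n}$ is $4$-Lipschitz, the collapsing flow is $1$-Lipschitz, and a Lipschitz reparametrisation of it stays Lipschitz). For the decay condition, fix $\delta>0$ and split $A$ along a slab $X_{j_{0}}\times X_{j_{0}}$: on $A\setminus(X_{j_{0}}\times X_{j_{0}})$ one has $d(x,x')<\delta$ by hybrid-control of $A$ and $\alpha_{k}$ only decreases this; on $A\cap(X_{j_{0}}\times X_{j_{0}})$, where $d(x,x')$ may be as large as the bound on $A$, the pairs lie in $N_{\mathcal{U}_{j_{0}}}$ after one collapse, so for $k$ large the $\varepsilon_{n}$-contraction of $\Phi^{\beta}$ on that slice — valid up to the radius $R_{n}$, which by bounded geometry of $W$ need only be taken comparable to the bound on $A$, and which Proposition \ref{prop 5.8} supplies precisely because $\mathrm{asdim}\,W\le m$ — forces $d(\alpha_{k}(x),\alpha_{k}(x'))<\delta$, hence pushes these pairs into a fixed $X_{i}$. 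Thus $B_{A}$ is hybrid-controlled, $\{\alpha_{k}\}$ satisfies the hypotheses of Lemma \ref{lemma 3.11}, and $K_{*}(B^{p}(X_{h}))=0$. The delicate point, and the main obstacle, is exactly this uniform-control verification: it is where the finite-asymptotic-dimension and bounded-geometry hypotheses on $W$ are indispensable, via Proposition \ref{prop 5.8}, and it is exactly the step where the purely collapsing-map argument of Theorem \ref{th 4.5} breaks down, since that argument provides no transverse contraction and cannot control the images of sets that are $C_{0}$-small only away from the $X_{i}$.
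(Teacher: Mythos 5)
Your overall strategy is the same as the paper's (an Eilenberg swindle via Lemma \ref{lemma 3.11}, built from the contracting maps $\beta_{n}$ of Proposition \ref{prop 5.8} spliced with the linear homotopies), but the specific swindle maps you choose do not satisfy the uniform-control hypothesis, and this is fatal. The paper flows \emph{every} point for the same time, taking $\alpha_{t}=\Phi_{t}\circ\gamma_{j,t}\circ\beta_{j-2}\circ\cdots\circ\beta_{1}$ and sampling at times $t_{k}$ with $t_{k+1}-t_{k}\to 0$; you instead keep the basepoint cutoff of Theorem \ref{th 4.5} and set $\alpha_{k}(x)=\Phi^{\beta}_{r_{k}(d(x,x_{0}))}(x)$. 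That reparametrisation flows the two members of a pair for times differing by exactly $|d(x,x_{0})-d(x',x_{0})|$. Since $\pi$ is $1$-Lipschitz and a low-level point flowed for time $t$ lands at level essentially $t$, the image pair sits at levels differing by $|d(x,x_{0})-d(x',x_{0})|$, hence $d(\alpha_{k}(x),\alpha_{k}(x'))\ge |d(x,x_{0})-d(x',x_{0})|$ regardless of how strong the transverse contraction from Proposition \ref{prop 5.8} is: the obstruction is vertical, not horizontal. Concretely, let $A$ be the hybrid-controlled set of pairs in $N_{\mathcal{U}_{1}}\times N_{\mathcal{U}_{1}}$ at distance at most $10$ and fix one pair in it with $|d(x,x_{0})-d(x',x_{0})|=5$; for every $i$ and all sufficiently large $k$ the pair $(\alpha_{k}(x),\alpha_{k}(x'))$ lies outside $X_{i}\times X_{i}$ and has distance at least $5$, so no hybrid-controlled $B_{A}$ can contain all image pairs and Lemma \ref{lemma 3.11} does not apply. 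Note the asymmetry with the same-time flow: there, any pair whose image ends up far out with one member fixed was already outside $X_{t_{k}}\times X_{t_{k}}$, hence already had small distance by hybrid-control of $A$; your construction manufactures large far-out pairs from low-level pairs of $A$, where $A$ imposes no smallness. The basepoint is also unnecessary for proper support here: hybrid-bounded sets are confined to some $X_{i}$, while the ranges $\pi^{-1}[t_{k},\infty)$ of the same-time maps escape them.

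Two secondary points would need repair even after dropping the basepoint. First, the $\alpha_{k}$ are not ``uniformly Lipschitz'': the $4$-Lipschitz constants of the $\beta_{n}$ compound under composition. Boundedness of the image pairs has to come instead from the simplex-adjacency clause of Proposition \ref{prop 5.8} (since $\beta_{n}(y)$ and $\Phi_{j_{n}}(y)$ share a simplex and $\Phi_{j_{n}}$ is contractive, one gets $d(\beta_{n}(y),\beta_{n}(y'))\le d(y,y')+\pi$), together with the contraction eventually taking over. Second, ``radii $R_{n}\uparrow\infty$\,\dots\,comparable to the bound on $A$'' has the quantifiers backwards: the sequence $\{\beta_{n}\}$ is fixed once and for all, before $A$ is given, so the $R_{n}$ must be chosen in advance to grow fast enough relative to the distance growth caused by $\beta_{n-1}\circ\cdots\circ\beta_{1}$ (with $\varepsilon_{n}$ correspondingly small), so that for every bound $R$ the contraction kicks in at some stage and then persists; this bookkeeping is precisely the verification the paper delegates to Wright's Theorem 5.9, and it is the part your sketch leaves unexamined.
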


\begin{proof}
This theorem can be proved in the same way as shown in [\cite{Wri05}, Theorem 5.9]. Moreover, the trick of the proof is also to use Proposition \ref{prop 5.8} to construct a sequence that satisfies the hypothesis of Lemma \ref{lemma 3.11}. More precisely, we construct a desired sequence as follows:

Let $i_{1}=1$. Using Proposition \ref{prop 5.8}, we choose $i_{2}>i_{1}=1$, and a map $\beta_{1}: X=\pi^{-1}[i_{1},\infty)\rightarrow \pi^{-1}[i_{2},\infty)$ with $d(\beta_{1}(x),\beta_{1}(x'))\leq d(x,x')$ for $x,x'$ with $d(x,x')<1$. Inductively, we apply Proposition \ref{prop 5.8} to construct maps $\beta_{j}: \pi^{-1}[i_{j},\infty)\rightarrow \pi^{-1}[i_{j+1},\infty)$ with $d(\beta_{j}(x),\beta_{j}(x'))<\frac{1}{j}d(x,x')$ for $x,x'$ with $d(x,x')<j$. Next, we define
$$
\alpha_{i_{j}}=\Phi_{i_{j}}\circ\beta_{j-2}\circ\cdots\circ\beta_{1}: X\rightarrow \pi^{-1}[i_{j},\infty).
$$
By Proposition \ref{prop 5.8}, we let $\gamma_{j,t}$ denote the linear homotopy from $\Phi_{i_{j}}$ to $\beta_{j-1}$. For $t\in [0,1]$, we set
$$
\alpha_{t}=\Phi_{t}\circ\gamma_{j,t}\circ\beta_{j-2}\circ\cdots\circ\beta_{1}: X\rightarrow\pi^{-1}[t,\infty), \quad t\in [i_{j},i_{j+1}].
$$
Then $\alpha_{t}$ is a homotopy between $\alpha_{i_{j}}$ and $\alpha_{i_{j+1}}$. Put $\{t_{k}\}$ be an increasing sequence tending to infinity with $t_{0}=1$, $t_{k+1}-t_{k}\rightarrow 0$ when $k$ tends to infinity, and let $\{i_{j}\}$ be an integer subsequence of $\{t_{k}\}$. Similarly, we can verify that the sequence $\{\alpha_{t_{k}}\}$ is properly supported, uniformly controlled, and has uniformly close steps. The remainder of the argument is analogous to that of Theorem 5.9 in \cite{Wri05} and is left to the readers.
\end{proof}

Below, we identify the $K$-theory of the $L^{p}$ Roe algebra of the total coarsening space equipped with the fusion structure with the $K$-theory of the $L^{p}$ Roe algebra of the total coarsening space equipped with the hybrid structure, and the latter is shown to be trivial by the above theorem. 

\begin{theorem}\label{th 5.10}
Let $W$ be a uniformly discrete bounded geometry metric space with asymptotic dimension at most $m$, and let $\mathcal{U}_{*}$ be an anti-\v{C}ech sequence for $W$ with degrees bounded by $m+1$. Then the forgetful map $B^{p}(X(W,\mathcal{U}_{*})_{f})\rightarrow B^{p}(X(W,\mathcal{U}_{*})_{h})$ induces an isomorphism on $K$-theory. In particular, $K_{*}(B^{p}(X(W,\mathcal{U}_{*})_{f}))=0$.
\end{theorem}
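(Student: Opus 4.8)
The plan is to compare the fusion and hybrid coarse structures on the total coarsening space $X = X(W,\mathcal{U}_*)$ by filtering both $L^p$ Roe algebras over the partial coarsening spaces and applying the five lemma to the resulting long exact sequences in $K$-theory. Recall that the fusion structure requires, for each controlled $A$, that $d|_{A\setminus(X_i\times X_i)}$ be $C_0$ for some $i$, whereas the hybrid structure requires $\sup\{d(x,y)\mid (x,y)\in A\setminus(X_i\times X_i)\}\to 0$ as $i\to\infty$; the latter is a strictly stronger (coarser structure) condition, so the identity map $X_h\to X_f$ is coarse and induces the forgetful $*$-homomorphism $B^p(X_f)\to B^p(X_h)$. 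Both structures agree with the bounded coarse structure on each $X_i$, so the ideals $I_f=\varinjlim_i B^p(X_i(W,\mathcal{U}_*))$ and $I_h=\varinjlim_i B^p(X_i(W,\mathcal{U}_*))$ literally coincide and sit inside both $B^p(X_f)$ and $B^p(X_h)$ as ideals, the forgetful map restricting to the identity on this common ideal.

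First I would establish that $B^p(X_f)/I_f \cong B^p(X_h)/I_h$ via the forgetful map. An operator in $B^p(X_f)$ (respectively $B^p(X_h)$) decomposes modulo $I_f$ (respectively $I_h$) as an operator whose support is $C_0$ off any $X_i$ (respectively tends to zero propagation off $X_i$); the quotient in both cases should be identified with an $L^p$ Roe algebra at infinity of $X$ built from the respective asymptotic conditions. The point is that the propagation bound on $X\setminus X_i$ is governed, in either quotient, purely by the germ of the metric near infinity of $X$ together with the collapsing maps, and by Proposition~\ref{prop 5.8} the relevant $C_0$-type estimates and the $\varepsilon$-Lipschitz estimates become interchangeable in the limit over $i$. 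Concretely, I would show both quotients equal $\varinjlim_i B^p(\pi^{-1}[i,\infty))$ for the appropriate induced structures and that these agree because any $C_0$-controlled set off $X_i$, when restricted further off $X_j$ for $j$ large, has arbitrarily small propagation, while conversely the hybrid condition trivially implies the $C_0$ condition. This gives the isomorphism on quotients.

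Then I would run the six-term exact sequence of the pair $(I_f, B^p(X_f)) = (I_h, B^p(X_h))$ in each case and compare via the commuting ladder: the map on the ideal terms is the identity (isomorphism), the map on the quotient terms is the isomorphism just established, and by naturality of the boundary map the ladder commutes; the five lemma then forces $K_*(B^p(X_f))\xrightarrow{\cong} K_*(B^p(X_h))$. Finally, invoking the preceding theorem that $K_*(B^p(X_h))=0$ under the finite asymptotic dimension hypothesis yields $K_*(B^p(X_f))=0$. The main obstacle I anticipate is the quotient identification step: one must argue carefully that the two different asymptotic decay conditions define the same ideal-quotient, handling the subtlety that $L^p$ operator algebra quotients need not behave as nicely as $C^*$-quotients (as already flagged in the proof of Theorem~\ref{th 3.2} and Theorem~\ref{th 4.8}), so I would prove directly that the relevant sum $B^p(X_0)+I_f$-type subalgebra is norm-closed in $B^p(X_h)$ by the same truncation-and-Cauchy-sequence argument used there, and that the forgetful map is surjective onto the quotient by exhibiting explicit preimages via the collapsing maps $\Phi_t$ and the homotopies $\gamma_{j,t}$ from Proposition~\ref{prop 5.8}.
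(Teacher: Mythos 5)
Your reduction to the ideal $I_f=I_h=\varinjlim_i B^p(X_i)$ and the five lemma is a legitimate framework, but the central step --- that the forgetful map identifies $B^p(X_f)/I_f$ with $B^p(X_h)/I_h$ --- is both unjustified and, as an algebra-level statement, false. The justification you offer rests on the assertion that ``the hybrid condition trivially implies the $C_0$ condition''; it does not. The fusion condition demands that off some fixed $X_i$ the distance be $C_0$, i.e.\ small outside \emph{compact} sets, and since each level $\pi^{-1}\{j\}$ is a non-compact nerve, this constrains the horizontal behaviour at every fixed level, whereas the hybrid condition only forces the propagation to decay as the level $j\to\infty$. Concretely, take $W$ infinite (say $W=\mathbb{Z}$) and let $T=\bigoplus_{j,k}t_{j,k}$, where for each level $j$ the $t_{j,k}$ ($k\in\mathbb{N}$) are norm-one rank-one operators supported on pairwise far-apart pairs of points of $\pi^{-1}\{j\}$ at distance $1/j$. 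Then $T$ is locally compact and hybrid-controlled, but for any fusion-controlled $S$ and any $R$ supported in some $X_{i'}\times X_{i'}$ one finds, at a level $j>i'$ and a pair outside the compact set witnessing the $C_0$ condition for $S$ at scale $1/(2j)$, that $\Vert T-(S+R)\Vert\geq\Vert t_{j,k}\Vert=1$; hence $T$ has distance $\geq 1$ from $B^p(X_f)+I_h$ and the map on quotients is not even approximately surjective. If the two coarse conditions were equivalent, the fusion and hybrid structures would coincide and Theorem \ref{th 5.10} would be vacuous, with no role for finite asymptotic dimension --- a sign that the difficulty cannot disappear at this step.

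There is also a structural circularity: since $I_f=I_h$, the statement that the quotients have isomorphic $K$-theory via the forgetful map is \emph{equivalent}, by the very five-lemma ladder you propose, to the theorem itself; so unless that quotient comparison is proved by independent means, the argument begs the question. The paper proves it by genuinely geometric means: subdivide $X$ into an $(m+1)$-dimensional simplicial complex with a uniform spherical metric (this is where the bounded degrees, i.e.\ finite asymptotic dimension, enter), and induct on dimension using an admissible binary decomposition into stars, skeleta and segments, with Lemma \ref{lemma A.8} controlling the metrics and Lemma \ref{lemma 3.16} assembling the Mayer--Vietoris pieces; the base case is the one place where hybrid and fusion honestly agree, namely uniformly discrete spaces, where the decay of propagation off $X_i$ forces the tail of a hybrid-controlled operator to be diagonal. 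Your proposal never uses the finite-dimensionality for the fusion-versus-hybrid comparison, which is precisely the ingredient that makes the comparison true at the level of $K$-theory despite failing at the level of algebras. (A minor point: the coarse identity goes $X_f\to X_h$, not $X_h\to X_f$, since fusion is the finer structure; your induced map $B^p(X_f)\to B^p(X_h)$ is nevertheless the correct one.)
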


\begin{proof}
Since the coarsening space $X$ is an adjunction space formed by products $N_{\mathcal{U}_{i}}\times[i,i+1]$ with $\dim N_{\mathcal{U}_{i}}\leq m$ for each $i\in\mathbb{N}$, we subdivide products to make $X$ a simplicial complex of dimension $m+1$. Equip $X$ with the uniform spherical metric. Our task now is to show that the forgetful map between the fusion and hybrid structures induces an isomorphism for any finite-dimensional simplicial complex. Let $X'$ be a metric simplicial complex of dimension $m'$ equipped with a map $\pi':X'\rightarrow\mathbb{R}^{+}$, and suppose that
\begin{itemize}
\item either $m'=0$ and $X'$ is uniformly discrete;
\item or $m'>0$, $X'$ is connected and the metric on $X'$ is the uniform spherical metric.
\end{itemize}
We proceed by induction on the dimension $m'$. In the case $m'=0$, for any hybrid controlled operator $T$ on a uniformly discrete space, we write $T=T_{1}+T_{2}$, where $\supp(T_{1})\subseteq\pi'^{-1}[0, i]\times\pi'^{-1}[0, i]$ for some integer $i$ and $\prop(T_{2})=0$, thus $T$ is fusion controlled. Since a fusion controlled operator is hybrid controlled for any space, the notions of controlled operators agree. Hence, $L^{p}$ Roe algebras for the hybrid and fusion structures are isomorphic, whence the result holds for $m'=0$. For the general case, proceed by induction on $m'$.

For $m'>0$, we let $X'=Y_{0}\cup\cdots\cup Y_{m'}$, where $Y_{k}$ is the union of stars about the barycenters of $k$-simplices in the second barycentric subdivision $X^{(2)}$ of $X'$, and let $\widetilde{Y}_{k}$ be the union of $Y_{k}$ and the 1-skeleton of $X^{(2)}$. Further, we set $G_{k}$ to be the graph consisting of those edges of $X^{(2)}$ which are not contained in $Y_{k}$. Let $G_{k}=V_{k}\cup E_{k}$, where $V_{k}$ is the graph consisting of uniformly separated stars about the vertices in $G_{k}$ and $E_{k}$ is the graph consisting of uniformly separated segments in the edges of $G_{k}$.

We build the binary decomposition as follows:
\begin{align*}
 X &=Z_{m} (\text{ the root })\\
 Z_{k} &=Z_{k-1}\cup\widetilde{Y}_{k}\\
Z_{0} &=\widetilde{Y}_{0}\\
\widetilde{Y}_{k} &=Y_{k}\cup G_{k}\\
G_{k} &=V_{k}\cup E_{k}.
\end{align*}
Below, we show that the decomposition is admissible for the forgetful transformation $K_{*}(B^{p}(X'_{f}))\rightarrow K_{*}(B^{p}(X'_{h}))$. First, we show that each fork is excisive. Since $X'$ is connected, $Z_{k}$, $\widetilde{Y}_{k}$ and $G_{k}$ are connected. By Lemma \ref{lemma A.8}, the metric on each fork is bi-Lipschitz equivalent to the uniform spherical metric. It follows that the decompositions are all excisive for both the hybrid and fusion structures. 

Next, we show that at each fork, the intersection leads to an isomorphism.
Observe that $Y_{k}\cap G_{k}$ and $V_{k}\cap E_{k}$ are 0-dimensional, thus they give rise to isomorphisms on $K$-theory. Evidently, $\dim(Z_{k-1}\cap\widetilde{Y}_{k})=m'-1$. If $m'=1$, $Z_{k-1}\cap\widetilde{Y}_{k}$ is uniformly discrete. If $m'>1$, by Lemma \ref{lemma A.8}, its metric is bi-Lipschitz equivalent to the uniform spherical metric. By inductive hypothesis, the intersection $Z_{k-1}\cap\widetilde{Y}_{k}$ yields an isomorphism.

Finally, we illustrate that each leaf gives an isomorphism. Observe that each leaf is coarsely equivalent to a uniformly discrete space for both the hybrid and fusion structures, so the transformation induces an isomorphism. Therefore, we conclude that the binary decomposition is admissible when $m'>0$. By Lemma \ref{lemma 3.16}, we thus derive the required isomorphism. Let $X'=X$, $\pi'=\pi$, and $m'=m+1$, we obtain the desired result.
\end{proof}

Combining Theorem \ref{th 4.8} with Theorem \ref{th 5.10}, we give a new proof of the following $L^{p}$ coarse Baum-Connes conjecture for spaces with finite asymptotic dimension, which was first established by Zhang and Zhou \cite{ZZ21}.
\begin{theorem}\label{th 5.3}
Let $W$ be a bounded geometry metric space with finite asymptotic dimension. For $p\in[1,\infty)$, the $L^{p}$ coarse Baum-Connes assembly map $\mu: KX^{p}_{*}(W)\rightarrow K_{*}(B^{p}(W))$ is an isomorphism.
\end{theorem}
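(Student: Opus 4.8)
The main analytic content of this theorem has already been assembled in the previous two sections, so the plan is essentially to reduce to the uniformly discrete case and then chain together Theorems \ref{th 4.8} and \ref{th 5.10}. First I would reduce to a uniformly discrete model of $W$. Every bounded geometry metric space contains a uniformly discrete subspace of bounded geometry (for instance a maximal $\varepsilon$-separated net) which is coarsely equivalent to $W$, and a coarse equivalence preserves the asymptotic dimension. Since $KX^p_*(-)$ and $K_*(B^p(-))$ are coarse invariants and the assembly map $\mu$ is natural with respect to coarse equivalences, $\mu$ is an isomorphism for $W$ if and only if it is an isomorphism for this subspace. Hence we may assume from the start that $W$ is uniformly discrete with bounded geometry and $\asdim W\leq m$ for some $m\in\mathbb{N}$.

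Next I would exploit the freedom in the choice of anti-\v{C}ech sequence. Because $\asdim W\leq m$, Definition \ref{def 5.2} together with the remark following it produces an anti-\v{C}ech sequence $\mathcal{U}_{*}$ for $W$ with $Degree(\mathcal{U}_{i})\leq m+1$ for every $i$, so each nerve $N_{\mathcal{U}_{i}}$ has dimension at most $m$. Both $KX^p_*(W)$ and $K_*(B^p(W))$ are independent of the chosen anti-\v{C}ech sequence, so there is no loss in working with this particular $\mathcal{U}_{*}$. Form the total coarsening space $X=X(W,\mathcal{U}_{*})$, together with its fusion coarse structure $X_{f}=X(W,\mathcal{U}_{*})_{f}$ and hybrid coarse structure $X_{h}=X(W,\mathcal{U}_{*})_{h}$.

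Now apply Theorem \ref{th 5.10}: since the anti-\v{C}ech sequence has degrees bounded by $m+1$, the forgetful map $B^p(X_{f})\rightarrow B^p(X_{h})$ induces an isomorphism on $K$-theory, and the target $K_*(B^p(X_{h}))$ vanishes, so $K_*(B^p(X_{f}))=0$. Finally Theorem \ref{th 4.8} states that the $L^p$ coarse Baum-Connes conjecture holds for the uniformly discrete bounded geometry space $W$ precisely when $K_*(B^p(X_{f}))=0$; we have just verified the latter, so $\mu\colon KX^p_*(W)\rightarrow K_*(B^p(W))$ is an isomorphism. Tracing back through the coarse equivalence of the first paragraph yields the statement for the original space, completing the proof.

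The step I expect to require the most care is the reduction in the first paragraph: one must make sure that passing to a uniformly discrete net is compatible simultaneously with the left-hand side $KX^p_*$, the right-hand side $K_*(B^p(-))$, and the assembly map relating them, and that a degree-controlled anti-\v{C}ech sequence can genuinely be built on $W$ itself rather than on some auxiliary space while keeping $\dim N_{\mathcal{U}_{i}}$ uniformly bounded. Once those points are settled, the conclusion is a formal consequence of Theorems \ref{th 4.8} and \ref{th 5.10}.
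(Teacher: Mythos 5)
Your proposal is correct and follows essentially the same route as the paper: the paper likewise fixes an anti-\v{C}ech sequence with nerves of dimension at most $m$, invokes Theorem \ref{th 5.10} to get $K_{*}(B^{p}(X_{f}))=0$, and concludes via the commutative diagram underlying Theorem \ref{th 4.8} that $\mu$ is an isomorphism. Your explicit reduction to a uniformly discrete net is a harmless (and reasonable) elaboration of a step the paper leaves implicit.
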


\begin{proof}
Let $\asdim W=m$ and $\mathcal{U}_{*}$ be an anti-\v{C}ech sequence for $W$ with $\dim N_{\mathcal{U}_{i}}\leq m$. Let $X$ be the total coarsening space. We have the following commutative diagram: 
\begin{equation}\label{eq 4}
\begin{CD}
K_{*+1}(B^{p}(X_{0})/I_{0})@>{\cong}>>K_{*+1}(B^{p}(X_{f})/I_{f})
\\@VV{\cong}V@VVV\\
K_{*}(I_{0})@>>>K_{*}(I_{f})
\\@AA{\cong}A@AA{\cong}A\\
KX^{p}_{*}(W)@>{\mu}>> K_{*}(B^{p}(W)).
\end{CD}
\end{equation}
Thanks to Theorem \ref{th 5.10}, we know that $K_{*}(B^{p}(X_{f}))=0$, thus $K_{*+1}(B^{p}(X_{f})/I_{f})\cong K_{*}(I_{f})$. Hence, from diagram (\ref{eq 4}), $\mu$ is an isomorphism.
\end{proof}

\section*{Acknowledgments}
The authors are grateful for the valuable comments and suggestions from the referees, which greatly enhanced the paper's clarity.

\end{document}